\documentclass[review,a4page]{elsarticle}
% \smartqed  % flush right qed marks, e.g. at end of proof
% \documentclass[review]{elsarticle}
\usepackage[utf8]{inputenc}
\usepackage{graphicx}
\setcounter{tocdepth}{2}
\usepackage{srcltx}
\usepackage{eurosym}
\usepackage{mathtools}
\allowdisplaybreaks
\usepackage{enumerate}
\usepackage{amsmath}
\usepackage{amsfonts}
\usepackage{amssymb}
\usepackage{amsthm}
\usepackage{graphicx}
\usepackage{mathrsfs}
\usepackage{xcolor}
\usepackage{exscale}
\usepackage{latexsym}
\usepackage{esvect}
\usepackage[T1]{fontenc}

% \usepackage{cite}
% \usepackage{pgf,tikz}
% \usetikzlibrary{arrows}
\usepackage{calc}
\usepackage[titletoc,toc,page]{appendix}
%%%%%%%%%%%%%%%%%%%%%%%%%%%%%%%%%%%%%%%%%%%%%%%%%%%%%%%%%%%%%%%%%%%%%%%%%%%%%%%%%%%%%%%%%%%%%%%%%%%%%%%%%%%%%%%%%%%%%%%%%%%%%%%%%%%%%%%%%%%%%%%
\usepackage[colorlinks,plainpages=true,pdfpagelabels,hypertexnames=true,colorlinks=true,pdfstartview=FitV,linkcolor=blue,citecolor=red,urlcolor=black]{hyperref}
\PassOptionsToPackage{unicode}{hyperref}
\PassOptionsToPackage{naturalnames}{hyperref}
\AtBeginDocument{%
\hypersetup{citecolor=red}}
%%%%%%%%%%%%%%%%%%%%%%%%%%%%%%%%%%%%%%%%%%%%%%%%%%%%%%%%%%%%%%%%%%%%%%%%%%%%%%%%%%%%%%%%%%%%%%%%%%%%%%%%%%%%%%%%%%%%%%%%%%%%%%%%%%%%%%%%%%%%%%%
\usepackage{enumerate}
\usepackage[shortlabels]{enumitem}
\usepackage{bookmark}
\usepackage{wasysym}
\usepackage{esint}
\usepackage{setspace}
%%%%%%%%%%%%%%%%%%%%%%%%%%%%%%%%%%%%%%%%%%%%%%%%%%%%%%%%%%%%%%%%%%%%%%%%%%%%%%%%%%%%%%%%%%%%%%%%%%%%%%%%%%%%%%%%%%%%%%%%%%%%%%%%%%%%%%%%%%%%%%%
\usepackage[ddmmyyyy]{datetime}
\numberwithin{equation}{section}
\everymath{\displaystyle}
%%%%%%%%%%%%%%%%%%%%%%%%%%%%%%%%%%%%%%%%%%%%%%%%%%%%%%%%%%%%%%%%%%%%%%%%%%%%%%%%%%%%%%%%%%%%%%%%%%%%%%%%%%%%%%%%%%%%%%%%%%%%%%%%%%%%%%%%%%%%%%%
% \usepackage[margin=2.2cm]{geometry}
% \parskip = 0.15in
\usepackage[margin=2.2cm]{geometry}

% \setlength{\topmargin}{-0.1in}
% \setlength{\oddsidemargin}{0.25in}
% \setlength{\evensidemargin}{0.25in}
% \setlength{\textwidth}{6.0in}
% \setlength{\rightmargin}{0.7in}
% \setlength{\leftmargin}{-0.5in}
% % \setlength{\textheight}{8.8in}
% \headsep=0.25 in
%%%%%%%%%%%%%%%%%%%%%%%%%%%%%%%%%%%%%%%%%%%%%%%
\usepackage[capitalize,nameinlink]{cleveref}
\crefname{section}{Section}{Sections}
\crefname{subsection}{Subsection}{Subsections}
\crefname{condition}{Condition}{Conditions}
\crefname{hypothesis}{Hypothesis}{Hypothesis}
\crefname{assumption}{Assumption}{Assumptions}
\crefname{lemma}{Lemma}{Lemmas}
\crefname{claim}{Claim}{Claims}
\crefname{remark}{Remark}{Remarks}

\crefformat{equation}{\textup{#2(#1)#3}}
\crefrangeformat{equation}{\textup{#3(#1)#4--#5(#2)#6}}
\crefmultiformat{equation}{\textup{#2(#1)#3}}{ and \textup{#2(#1)#3}}
{, \textup{#2(#1)#3}}{, and \textup{#2(#1)#3}}
\crefrangemultiformat{equation}{\textup{#3(#1)#4--#5(#2)#6}}%
{ and \textup{#3(#1)#4--#5(#2)#6}}{, \textup{#3(#1)#4--#5(#2)#6}}%
{, and \textup{#3(#1)#4--#5(#2)#6}}

% But spell it out at the beginning of a sentence.
\Crefformat{equation}{#2Equation~\textup{(#1)}#3}
\Crefrangeformat{equation}{Equations~\textup{#3(#1)#4--#5(#2)#6}}
\Crefmultiformat{equation}{Equations~\textup{#2(#1)#3}}{ and \textup{#2(#1)#3}}
{, \textup{#2(#1)#3}}{, and \textup{#2(#1)#3}}
\Crefrangemultiformat{equation}{Equations~\textup{#3(#1)#4--#5(#2)#6}}%
{ and \textup{#3(#1)#4--#5(#2)#6}}{, \textup{#3(#1)#4--#5(#2)#6}}%
{, and \textup{#3(#1)#4--#5(#2)#6}}

% Make number non-italic in any environment.
\crefdefaultlabelformat{#2\textup{#1}#3}
%%%%%%%%%%%%%%%%%%%%%%%%%%%%%%%%%%%%%%%%%%%%%%%%%%%%%%%%%%%%%%%%%%%%%%%%%%%%%%%%%%%%%%%%%%%%%%%%%%%%%%%%%%%%%%%%%%%%%%%%%%%%%%%%%%%%%%%%%%%%%%%
\newtheorem{theorem}{Theorem}[section]
\newtheorem{lemma}[theorem]{Lemma}

\newtheorem{proposition}[theorem]{Proposition}

\newtheorem{definition}[theorem]{Definition}% Use {\rm ...}
\newtheorem{remark}[theorem]{Remark}        % Use {\rm ...}

\numberwithin{equation}{section}
%%%%%%%%%%%%%%%%%%%%%%%%%%%%%%%%%%%%%%%%%%%%%%%HORIZONTAL DASH WITH DOUBLE INTEGRAL%%%%%%%%%%%%%%%%%%%%%%%%%%%%%%%%%%%%%%%%%%%%%%%%%%%%%%%%%%%%

\def\YYint#1#2#3{{\setbox0=\hbox{$#1{#2#3}{\iint}$}
\vcenter{\hbox{$#2#3$}}\kern-.50\wd0}}
% \def\longdash{-\mkern-3.5mu-}%USE THIS IF \usepackage{fourier} IS  USED.
 %USE THIS IF \usepackage{fourier} IS NOT USED.

%%%%%%%%%%%%%%%%%%%%%%%%%%%%%%%%%%%%%%%%%%%%%%%

\def\XXint#1#2#3{{\setbox0=\hbox{$#1{#2#3}{\int}$}
\vcenter{\hbox{$#2#3$}}\kern-.50\wd0}}

%%%%%%%%%%%%%%%%%%%%%%%%%%%%%%%%%%%%%%%%%%%%%%%
\makeatletter
\def\namedlabel#1#2{\begingroup
\def\@currentlabel{#2}%
\label{#1}\endgroup
}
\makeatother
%%%%%%%%%%%%%%%%%%%%%%%%%%%%%%%%%%%%%%%%%%%%%%%%%%%%%%%%%%%%%%%%%%%%%%%%%%%%%%%%%%%%%%%%%%%%%%%%%%%%%%%%%%%%%%%%%%%%%%%%%%%%%%%%%%%%%%%%%%%%%%%
\makeatletter
\newcommand{\rmh}[1]{\mathpalette{\raisem@th{#1}}}
\newcommand{\raisem@th}[3]{\hspace*{-1pt}\raisebox{#1}{$#2#3$}}
\makeatother

%%%%%%%%%%%%%%%%%%%%%%%%%%%%%%%%%%%%%%%%%%%%%%%%%%%%%%%%%%%%%%%%%%%%%%%%%%%%%%%%%%%%%%%%%%%%%%%%%%%%%%%%%%%%%%%%%%%%%%%%%%%%%%%%%%%%%%%%%%%%%%%

%%%%%%%%%%%%%%%%%%%%%%%%%%%%%%%%%%%%%%%%%%%%%%%%%%%%%%%%%%%%%%%%%%%%%%%%%%%%%%%%%%%%%%%%%%%%%%%%%%%%%%%%%%%%%%%%%%%%%%%%%%%%%%%%%%%%%%%%%%%%%%%

\newcommand{\descref}[2]{\hyperref[#1]{\textcolor{black}{(}\textcolor{blue}{\bf #2}\textcolor{black}{)}}}
%%%%%%%%%%%%%%%%%%%%%%%%%%%%%%%%%%%%%%%%%%%%%%%%%%%%%%%%%%%%%%%%%%%%%%%%%%%%%%%%%%%%%%%%%%%%%%%%%%%%%%%%%%%%%%%%%%%%%%%%%%%%%%%%%%%%%%%%%%%%%%%

\newcommand{\dref}[2]{\hyperref[#1]{\textcolor{black}{(}\textcolor{blue}{\bf #2}\textcolor{black}{)}}}
%%%%%%%%%%%%%%%%%%%%%%%%%%%%%%%%%%%%%%%%%%%%%%%%%%%%%%%%%%%%%%%%%%%%%%%%%%%%%%%%%%%%%%%%%%%%%%%%%%%%%%%%%%%%%%%%%%%%%%%%%%%%%%%%%%%%%%%%%%%%%%%
%\usepackage[inline]{showlabels}

% \baselinestretch{0.8}
%%%%%%%%%%%%%%%%%%%%%%%%%%%%%%%%%%%%%%%%%%%%%%%SPACING BEFORE AND AFTER EQUATION
\makeatletter
\g@addto@macro\normalsize{%
\setlength\abovedisplayskip{3pt}
\setlength\belowdisplayskip{3pt}
\setlength\abovedisplayshortskip{1pt}
\setlength\belowdisplayshortskip{3pt}
}
%%%%%%%%%%%%%%%%%%%%%%%%%%%%%%%%%%%%%%%%%%%%%%%
\makeatletter
\def\ps@pprintTitle{%
\let\@oddhead\@empty
\let\@evenhead\@empty
\def\@oddfoot{}%
\let\@evenfoot\@oddfoot}
\makeatother

\newcommand{\ep}{\epsilon}

%%%%%%%%%%%%%%%%%%%%%%%%%%%%%%%%%%%%%%%%%%%%%%%

%\newcommand{\La}{\Lambda}

%%%%%%%%%%%%%%%%%%%%%%%%%%%%%%%%%%%%%%%%%%%%%%%%%%%%%%%%%%%%%%%%%%%%%%%%%%%%%%%%%%%%%%%%%%%%%%%%%%%%%%%%%%%%%%%%%%%%%%%%%%%%%%%%%%%%%%%%%%%%%%%

%%%%%%%%%%%%%%%%%%%%%%%%%%%%%%%%%%%%%%%%%%%%%%%%%%%%%%%%%

%%%%%%%%%%%%%%%%%%%%%%%%%%%%%%%%%%%%%%%%%%%%%%%%%%%%%%%%%%%%%%%%%%%%%%%%%%%%%%%%%%%%%%%%%%%%%%%%%%%%%%%%%%%%%%%%%%%%%%%%%%%%%%%%%%%%%%%%%%%%%%%

\newcommand{\iprod}[2]{\langle #1 \ ,  #2\rangle}

%%%%%%%%%%%%%%%%%%%%%%%%%%%%%%%%%%%%%%%%%%%%%%%

%%%%%%%%%%%%%%%%%%%%%%%%%%%%%%%%%%%%%%%%%%%%%%%

%%%%%%%%%%%%%%%%%%%%%%%%%%%%%%%%%%%%%%%%%%%%%%%%
%%%%%%%%%%%%%%%%%%%%%%%%%%%%%%%%%%%%%%%%%%%%%%%%%%%%%%%%%%%%%%%%%%%%%%%%%%%%%%%%%%%%%%%%%%%%%%

 % inner command, used by \rchi

%%%%%%%%%%%%%%%%%%%%%%%%%%%%%%%%%%%%%%%%%%%%%%%%%%%%%%%%5

%%%%%%%%%%%%%%%%%%%%%%%%%%%%%%%%%%%%%%%%%%%%%%%%%
\newcounter{whitney}
\refstepcounter{whitney}

\newcounter{ineqcounter}
\refstepcounter{ineqcounter}

%%%%%%%%%%%%%%%%%%%%%%%%%%%%%%%%%%%%
% \newcounter{rtaskno}
% \DeclareRobustCommand{\rtask}[1]{%
%    \refstepcounter{rtaskno}%
%    \thertaskno\label{#1}}
%
%    
% \setcounter{rtaskno}{0}
% \section{Task \rtask{task:one}. Blah Blah..}
%
% \section{Task \rtask{task:two}. Blah Blah..}
%
% \section{Task \rtask{task:three}. Blah Blah..}
%
% \section{Task \rtask{task:four}. Blah Blah..}
%
% Task~\ref{task:four}
% \newcommand{\mycount}[1]{\item[N\thedesccount\label{#1}]\refstepcounter{desccount}}
% \newcommand{\myrefN}[1]{\hyperref[#1]{\textcolor{black}{(}\textcolor{blue}{\bf N\ref{#1}}\textcolor{black}{)}}}
%%%%%%%%%%%%%%%%%%%%%%%%%%%%%%%%%%%%
\begin{document}
\begin{frontmatter}
%\addbibresource{main.bib}
\title{Multiplicity of solutions for mixed local-nonlocal elliptic equations with singular nonlinearity}

\author{Kaushik Bal and Stuti Das}
\ead{kaushik@iitk.ac.in and stutid21@iitk.ac.in}

\address{Department of Mathematics and Statistics,\\ Indian Institute of Technology Kanpur, Uttar Pradesh, 208016, India}

\newcommand*{\avint}{\mathop{\, \rlap{--}\!\!\int}\nolimits}

\begin{abstract}
We will prove multiplicity results for the mixed local-nonlocal elliptic equation of the form
\begin{eqnarray}
\begin{split}
-\Delta_pu+(-\Delta)_p^s u&=\frac{\lambda}{u^{\gamma}}+u^r  \text { in } \Omega, \\u&>0 \text{ in } \Omega,\\u&=0  \text { in }\mathbb{R}^n \backslash \Omega;
\end{split}
\end{eqnarray}
where 
\begin{equation*}
(-\Delta )_p^s u(x)= c_{n,s}\operatorname{P.V.}\int_{\mathbb{R}^n}\frac{|u(x)-u(y)|^{p-2}(u(x)-u(y))}{|x-y|^{n+sp}} d y, 
\end{equation*}
and $-\Delta_p$ is the usual $p$-Laplace operator.
Under the assumptions that $\Omega$ is a bounded domain 
in $\mathbb{R}^{n}$ with regular enough boundary, $p>1$, $n> p$, $s\in(0,1)$, $\lambda>0$ and $r\in(p-1,p^*-1)$ where $p^*$ is the critical Sobolev exponent, we will show there exist at least two weak solutions to our problem for $0<\gamma<1$ and some certain values of $\lambda$. Further, for every $\gamma>0$, assuming strict convexity of $\Omega$, for $p=2$ and $s\in(0,1/2)$, we will show the existence of at least two positive weak solutions to the problem, for small values of $\lambda$, extending the result of \cite{garaingeometric}. Here $c_{n,s}$ is a suitable normalization constant, and $\operatorname{P.V.}$ stands for Cauchy Principal Value.
\end{abstract}

%\begin{keyword}

%\MSC [2020]:
%\end{keyword}

\end{frontmatter}

\begin{singlespace}
\tableofcontents
\end{singlespace}

\section{Introduction}In this article, we deal with the multiplicity of weak solutions to the singular elliptic problems given by
\begin{equation}{\label{p1}}
\begin{split}
-\Delta_pu+(-\Delta)_p^s u&=\frac{\lambda}{u^{\gamma}}+ u^r  \text { in } \Omega, \\u&>0 \text{ in } \Omega,\\u&=0  \text { in }\mathbb{R}^n \backslash \Omega,
\end{split}
\end{equation}
where $\gamma\in(0,1)$, $n>p>1$, $s\in(0,1)$, $\lambda>0$, $p<r+1<p^*$, $\Omega$ is bounded domain in $\mathbb{R}^n$ with $C^1$ boundary;
and \begin{equation}{\label{p2}}
\begin{split}
-\Delta u+(-\Delta)^s u&=\frac{\lambda}{u^{\gamma}}+u^r  \text { in } \Omega, \\u&>0 \text{ in } \Omega,\\u&=0  \text { in }\mathbb{R}^n \backslash \Omega,
\end{split}
\end{equation}
for $\gamma\geq 1$. Of course, in the latter case, we need strict convexity and smooth boundary of $\Omega$ along with $0<s<1/2$.
To this aim, we start with a brief background of the problems available in the literature. \smallskip\\
Singular elliptic problems have been extensively studied for the past few decades, starting with the pioneering work of Crandall-Rabinowitz-Tartar \cite{CrRaTa}, who showed that the unperturbed case of \eqref{p2} (and for local operator), under Dirichlet boundary conditions given by
\begin{equation}{\label{pb}}
\begin{array}{c}
    -\Delta u=\frac{f}{u^{\gamma}}  \text { in } \Omega, \\
 u>0\;  \text { in } \Omega, \quad
u=0\;  \text { in } \partial \Omega,
\end{array}
\end{equation}
admits a unique solution $u\in C^2(\Omega)\cap C(\overline{\Omega})$ for any $\gamma>0$ along with the fact that the solution behaves like a distance function near the boundary provided $f$ is H\"older Continuous. Interestingly enough, Lazer-Mckenna \cite{LaMc} showed that the unique solution obtained by \cite{CrRaTa} is indeed in $W_0^{1,2}(\Omega)$ if and only if $0<\gamma<3$. They also showed that the solution belongs to $C^1(\overline{\Omega})$ provided $0<\gamma<1$. This was followed for the perturbed singular case by the work of Haitao \cite{Haitao}, who studied the problem
\begin{equation}{\label{pb2}}
\begin{array}{c}
    -\Delta u=\frac{\lambda}{u^{\gamma}}+u^r  \text { in } \Omega,\\
 u>0\;  \text { in } \Omega, \quad
u=0\;  \text { in } \partial \Omega,
\end{array}
\end{equation}
and showed the existence of $\Lambda>0$ such that there exists at least two solutions $u, v \in W_0^{1,2}(\Omega)$ to problem \eqref{pb2} for $\lambda<\Lambda$, no solution for $\lambda>\Lambda$ and at least one solution for $\lambda=\Lambda$ provided $0<\gamma<1<r \leq 2^*-1$. The generalization of these results for $p$-Laplacian was given by Giacomoni et al. \cite{Giacomoni}, who showed the existence of at least two solutions for $0<\gamma<1$ and $p-1<r\leq p^*-1$. In the above-mentioned works on the perturbed problems, the solution so obtained satisfied the boundary condition in the trace sense, and the restriction $0<\gamma<1$ is due to the use of variational methods which require the associated functional to be well-defined on $W_0^{1, p}(\Omega)$.
\smallskip\\Boccardo-Orsina \cite{orsina} in a beautiful paper showed the followings regarding solutions of \eqref{pb} 
\begin{equation*}
    \begin{cases}u \in W_0^{1, \frac{n m(1+\gamma)}{n-m(1-\gamma)}}(\Omega) & \text { if } 0<\gamma<1 \text { and } f \in L^m(\Omega) \text { with } m \in\left[1,\left(2^* /(1-\gamma)\right)^{\prime}\right), \\ u \in W_0^{1,2}(\Omega) & \text { if } 0<\gamma<1 \text { and } f \in L^m(\Omega) \text { with } m=\left(2^* /(1-\gamma)\right)^{\prime}, \\ u \in W_0^{1,2}(\Omega) & \text { if } \gamma=1 \text { and } f \in L^1(\Omega), \\ u^{\frac{1+\gamma}{2}} \in W_0^{1,2}(\Omega) & \text { if } \gamma>1 \text { and } f \in L^1(\Omega).\end{cases}
\end{equation*}The boundary condition is now understood as such that $u^{\frac{1+\gamma}{2}}$ belongs to $W_0^{1,2}(\Omega)$. This has been generalized by Canino et al. \cite{Canino} for the $p$-Laplacian where existence of a solution $u \in W_{\operatorname {loc }}^{1, p}(\Omega)$ was shown for $\gamma>0$ and $f \in L^1(\Omega)$ such that $u^{\frac{p-1+\delta}{p}} \in W_0^{1, p}(\Omega)$. The perturbed problem \eqref{pb2} was studied by Arcoya-Boccardo \cite{Arcoya} for $0<\gamma< 1$ using the variational method. Again for $\gamma\geq 1$, Arcoya-Mérida \cite{Arcoyamultigreaterthan} obtained the existence of at least two solutions in $W_{\operatorname{loc}}^{1,2}(\Omega) \cap L^{\infty}(\Omega)$. Moreover, any solution $u$ so obtained satisfies $u^{\frac{1+\gamma}{2}} \in W_0^{1,2}(\Omega)$. This was generalized for the quasilinear case by Bal-Garain \cite{quasilinear} for any $\frac{2 n+2}{n+2}<p<n$.
\smallskip\\ For the nonlocal case of \cref{pb}, we refer \cite{fracsingular,scase}, where the authors obtained, among other results, existence and summability of weak solutions. For multiplicity results in nonlocal perturbed singular case, with $\gamma<1$, one can see \cite{fracvarisin} and the references therein. As for the mixed local-nonlocal problem, the literature is very little known. Recently, in \cite{arora,garain}, the authors have studied the singular problems associated with mixed operators given by
\begin{equation*}
\begin{array}{c}
-\Delta_pu+(-\Delta)_p^s u=\frac{f}{u^{\gamma}}  \text { in } \Omega, \smallskip\\u>0 \text{ in } \Omega,\quad u=0  \text { in }\mathbb{R}^n \backslash \Omega;
\end{array}
\end{equation*}
and obtained plenty of results regarding existence and other properties of solutions. In \cite{garaingeometric}, the author has used variational techniques and shown there exist at least two solutions to \eqref{p2} for $\gamma<1$. We will generalize this for the quasilinear case \cref{p1}. Further, we also aim to get multiplicity results similar to Arcoya-Mérida \cite{Arcoyamultigreaterthan} in mixed local nonlocal setting for \cref{p2}.\smallskip\\ In this article, we will extensively use the regularity results, maximum principles and other properties obtained in \cite{Biagiregularitymaximum,FaberKrahn,eigenvalue,Biagisymmetry,MG,valdinoci}. We finish our literature survey by providing some references for parabolic problems as \cite{abdellaoui2,bal2024mixed,parabolic1}. Now that the history of the problem is clear, let us discuss the difficulties one encounters while studying the problems \cref{p1,p2} and the strategy we employ to circumvent those difficulties.\smallskip\\
\textbf{Goal of this paper:}
We intend to deduce the existence of two different weak solutions to \cref{p1}, with $1<p<\infty$ and for $0<\gamma<1$. For $\gamma\geq1$, we continue to show the existence of at least two weak solutions of the semilinear case of \cref{p1} given by \cref{p2}. \smallskip\\\textbf{Difficulties:} To deal with the singular term, we follow the classical approach of obtaining solutions to a sequence of approximated problems given by
\begin{equation}{\label{appro}}
\begin{array}{c}
-\Delta_pu+(-\Delta)_p^s u=\frac{\lambda}{(u+\frac{1}{k})^{\gamma}}+u^r  \text { in } \Omega, \smallskip\\u>0 \text{ in } \Omega,\quad u=0  \text { in }\mathbb{R}^n \backslash \Omega,
\end{array}
\end{equation}
where $k\in \mathbb{N}$. Note that this problem is non-singular for any $k \in \mathbb{N}$. We start by showing the existence of two different solutions to \cref{appro} for each $k\in\mathbb{N}$. For \cref{p1}, we will use the variational technique (as $\gamma<1$ allows us so) and show that the functional corresponding to \cref{appro} satisfies the mountain pass geometry. We will obtain a solution as the critical point of the functional which is different from the minimizer. This, along with a uniform apriori bound in $W^{1,p}_0(\Omega)$, will give our multiplicity result (of course, one needs to show the almost everywhere convergence of the gradients of weak solutions (of \cref{appro}) to the gradient of their limit, to pass $k\to\infty$ in \cref{appro}).\smallskip\\For \cref{p2}, we obtain a uniform apriori estimate for the $L^\infty$ bound of the solutions of \cref{appro}  (with $p=2$) independent of $k$ and then use Leray-Schauder degree. We conclude by passing to the limit to obtain two distinct solutions to our main problem. One of this study's main challenges is finding the uniform a priori estimates independent of $k$. For the Laplacian operator, one can use Kelvin transformation (see \cite{Arcoyamultigreaterthan}) which fails for the mixed local-nonlocal case. We overcome this difficulty by considering strict convexity of our domain. Once we have a uniform neighbourhood of the boundary, the blow-up analysis of Gidas-Spruck \cite{GidasSpruck} goes through. Here we consider the fractional Laplace operator as a lower order term and approach by taking limit. \\
Another difficulty arises in constructing an appropriate subsolution to the approximated problem for the unperturbed case, which is handled by taking the first eigenfunction of Laplacian and using its regularity. It is important to mention that we also prove existence result regarding \cref{p2} in \cref{th2}, which represents the boundary data of $u$. A result regarding nonexistence of solutions to \cref{p2} for large $\lambda$ is also obtained.
\subsection*{\textbf{Organization of the article}} In \cref{prelims}, we will write notations and give definitions and embedding results regarding Sobolev spaces we need. Appropriate notions of weak solutions for our problems will be defined, and the main results will be stated. The next section will contain preliminaries required for \cref{p1}, and the multiplicity result regarding this will be proved in \cref{th1}. \smallskip\\ \cref{th2} contains an existence result to guarantee the boundary behaviour of solutions to \cref{p2}. The next section is devoted to the preliminaries of \cref{p2}. One will get the existence of two different uniformly bounded (in $L^\infty$) sequences of solutions to the approximated problems here, and these will be used in \cref{th3} to show the multiplicity result. We will end our discussion with a related problem not involving perturbation following \cite{Arcoyamultigreaterthan}.
\section{Preliminaries}{\label{prelims}}
\subsection{\textbf{Notations}} We gather here all the standard notations that will be used throughout the paper.\smallskip\\
$\bullet$ We will take $n$ to be the space dimension and $\Omega$ be an open bounded domain in $\mathbb{R}^n$ with $C^1$ or smooth boundary.\smallskip\\
$\bullet$ For $q>1$, the H\"older conjugate exponent of $q$ will be denoted by $q^\prime=\frac{q}{q-1}$.\smallskip\\
$\bullet$ The Lebesgue measure of a measurable subset $\mathrm{S}\subset \mathbb{R}^n$ will be denoted by $|\mathrm{S}|$.\smallskip\\
$\bullet$ For any open subset $\Omega$ of $\mathbb{R}^n$, $K\subset\subset \Omega $ will imply $K$ is compactly contained in $\Omega.$\smallskip\\
%$\bullet$ We shall use the notations for the balls and parabolic cylinders as
%\begin{equation*}
%\begin{array}{ll}
%B_\varrho\left(x_0\right)=\left\{x \in \mathbb{R}^n:\left|x-x_0\right|<\varrho\right\}, & \bar{B}_\varrho\left(x_0\right)=\left\{x \in \mathbb{R}^n:\left|x-x_0\right| \leq \varrho\right\}, \\
%I_\varrho\left(t_0\right)=\left\{t \in \mathbb{R}: t_0-\varrho^2<t<t_0\right\}, & Q_{\varrho}\left(z_0\right)=B_\varrho\left(x_0\right) \times I_\varrho\left(t_0\right) .
%\end{array}
%\end{equation*}
$\bullet$ $\int$ will denote integration concerning space only, and integration on $\Omega \times \Omega$ or $\mathbb{R}^n \times \mathbb{R}^n$ will be denoted by a double integral $\iint$. Moreover, average integral will be denoted by $\fint$.\smallskip\\
$\bullet$ The notation $a \lesssim b$ will be used for $a \leq C b$, where $C$ is a universal constant which only depends on the dimension $n$ and sometimes on $s$ too. $C$ (or sometimes $c$) may vary from line to line or even in the same line.\smallskip\\
$\bullet$ For a function $h$, we denote its positive and negative parts by $h^+=\max\{h,0\}$, $h^-=\max\{-h,0\}$ respectively.%\smallskip\\
%$\bullet$ For $k\in \mathbb{N}$, we denote $T_k(\sigma)=\max \{-k, \min \{k, \sigma\}\}$, for $\sigma \in \mathbb{R}$.
\subsection{\textbf{Function Spaces}}
In this section, we present 
 definitions and properties of some function spaces that will be useful for our work. We recall that for $E \subset \mathbb{R}^n$, the Lebesgue space
$L^p(E), 1 \leq p<\infty$, is defined to be the space of $p$-integrable functions $u: E \rightarrow \mathbb{R}$ with the finite norm
\begin{equation*}
\|u\|_{L^p(E)}=\left(\int_E|u(x)|^p d x\right)^{1 / p} .
\end{equation*}
By $L_{\operatorname{loc }}^p(E)$ we denote the space of locally $p$-integrable functions, which means, $u \in L_{\operatorname{loc }}^p(E)$ if and only if $u \in L^p(F)$ for every $F \subset\subset E$. In the case $0<p<1$, we denote by $L^p(E)$ a set of measurable functions such that $\int_E|u(x)|^p d x<\infty$.
\begin{definition}
    The Sobolev space $W^{1, p}(\Omega)$, for $1 \leq p<\infty$, is defined as the Banach space of locally integrable weakly differentiable functions $u: \Omega \rightarrow \mathbb{R}$ equipped with the following norm
\begin{equation*}
\|u\|_{W^{1, p}(\Omega)}=\|u\|_{L^p(\Omega)}+\|\nabla u\|_{L^p(\Omega)} .
\end{equation*}
\end{definition}
The space $W_0^{1, p}(\Omega)$ is defined as the closure of the space ${C}_c^{\infty}(\Omega)$, in the norm of the Sobolev space $W^{1, p}(\Omega)$, where ${C}^\infty_c(\Omega)$ is the set of all smooth functions whose supports are compactly contained in $\Omega$.
%We now proceed with defining the fractional Sobolev spaces. We will also state the embedding results regarding those spaces.  
\begin{definition}
    Let $0<s<1$ and $\Omega$ be an open connected subset of $\mathbb{R}^n$ with $C^1$ boundary. The fractional Sobolev space $W^{s, q}(\Omega)$ for any $1\leq q<+\infty$ is defined by
\begin{equation*}
    W^{s, q}(\Omega)=\left\{u \in L^q(\Omega): \frac{|u(x)-u(y)|}{|x-y|^{\frac{n}{q}+s}} \in L^q(\Omega\times\Omega)\right\},
\end{equation*}
and it is endowed with the norm
\begin{equation}{\label{norm}}
\|u\|_{W^{s, q}(\Omega)}=\left(\int_{\Omega}|u(x)|^q d x+\int_{\Omega} \int_{\Omega} \frac{|u(x)-u(y)|^q}{|x-y|^{n+sq}}d x d y\right)^{1/q}.
\end{equation}
\end{definition}
It can be treated as an intermediate space between $W^{1,q}(\Omega)$ and $L^q(\Omega)$. For $0<s\leq s^{\prime}<1$, $W^{s^{\prime},q}(\Omega)$ is continuously embedded in $W^{s,q}(\Omega)$, see [\citealp{frac}, Proposition 2.1]. The fractional Sobolev space with zero boundary values is defined by
\begin{equation*}
W_0^{s, q}(\Omega)=\left\{u \in W^{s, q}(\mathbb{R}^n): u=0 \text { in } \mathbb{R}^n \backslash \Omega\right\}.
\end{equation*}
However $W_0^{s, q}(\Omega)$ can be treated as the closure of $ {C}^\infty_c(\Omega)$ in $W^{s,q}(\Omega)$ with respect to the fractional Sobolev norm defined in \cref{norm}. Both $W^{s, q}(\Omega)$ and $W_0^{s, q}(\Omega)$ are reflexive Banach spaces, for $q>1$, for details we refer to the readers [\citealp{frac}, Section 2]. The spaces $W^{1, p}(\Omega)$ and $W_0^{1, p}(\Omega)$ are also reflexive for $p>1$.\smallskip\\
The following result asserts that the classical Sobolev space is continuously embedded in the fractional Sobolev space; see [\citealp{frac}, Proposition 2.2]. The idea applies an extension property of $\Omega$ so that we can extend functions from $W^{1,q}(\Omega)$ to $W^{1,q}(\mathbb{R}^n)$ and that the extension operator is bounded.
\begin{lemma}{\label{embedding}}
    Let $\Omega$ be a bounded domain in $\mathbb{R}^n$ with $C^{0,1}$ boundary. Then $\exists\, C=C(\Omega, n, s)>0$ such that
\begin{equation*}
\|u\|_{W^{s, q}(\Omega)} \leq C\|u\|_{W^{1,q}(\Omega)},
\end{equation*}
for every $u \in W^{1,q}(\Omega)$.
\end{lemma}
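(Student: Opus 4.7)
The plan is to combine the extension theorem for Lipschitz domains with a near/far-diagonal splitting of the Gagliardo double integral. Since $\partial\Omega$ is of class $C^{0,1}$, there exists a bounded linear extension operator $E\colon W^{1,q}(\Omega)\to W^{1,q}(\mathbb{R}^n)$ satisfying $\|Eu\|_{W^{1,q}(\mathbb{R}^n)}\le C(\Omega,n,q)\|u\|_{W^{1,q}(\Omega)}$. The $L^{q}$ part of the $W^{s,q}(\Omega)$-norm is trivially bounded by $\|u\|_{L^{q}(\Omega)}\le \|u\|_{W^{1,q}(\Omega)}$, so it remains to control the Gagliardo seminorm
$$[u]_{s,q}^{q}:=\iint_{\Omega\times\Omega}\frac{|u(x)-u(y)|^{q}}{|x-y|^{n+sq}}\,dx\,dy.$$

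Next, fix a convenient radius $\delta>0$ (say $\delta=1$) and split the integration region into $\{|x-y|\le\delta\}$ and $\{|x-y|>\delta\}$, writing $[u]_{s,q}^{q}=I_{1}+I_{2}$. For the far-diagonal piece $I_{2}$, use the elementary inequality $|u(x)-u(y)|^{q}\le 2^{q-1}\bigl(|u(x)|^{q}+|u(y)|^{q}\bigr)$ together with $\int_{|z|>\delta}|z|^{-n-sq}\,dz<\infty$ (since the exponent exceeds $n$), which yields $I_{2}\le C(n,s,q,\delta)\,\|u\|_{L^{q}(\Omega)}^{q}$. For the near-diagonal piece $I_{1}$, replace $u$ by $\tilde u:=Eu$ on $\mathbb{R}^n$ and change variables $h=x-y$. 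Invoke the pointwise estimate $|\tilde u(y+h)-\tilde u(y)|\le |h|\int_{0}^{1}|\nabla\tilde u(y+th)|\,dt$ (valid for a.e. $y,h$ by approximation with smooth functions) and Jensen's inequality to get $|\tilde u(y+h)-\tilde u(y)|^{q}\le |h|^{q}\int_{0}^{1}|\nabla\tilde u(y+th)|^{q}\,dt$. Applying Fubini and the translation invariance of Lebesgue measure then gives
$$I_{1}\le \|\nabla\tilde u\|_{L^{q}(\mathbb{R}^n)}^{q}\int_{|h|\le\delta}\frac{dh}{|h|^{n-q(1-s)}},$$
where the last integral is finite since $n-q(1-s)<n$.

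The only subtlety is justifying the fundamental-theorem-of-calculus estimate for a generic $W^{1,q}$ function, but this is routine via mollification of $\tilde u$ and passing to the limit in $L^{q}$; no substantive obstacle arises. Combining the estimates on $I_{1}$ and $I_{2}$ with the extension bound $\|\nabla\tilde u\|_{L^{q}(\mathbb{R}^n)}\le C(\Omega,n,q)\|u\|_{W^{1,q}(\Omega)}$ and with the trivial $L^{q}$ bound, one obtains $\|u\|_{W^{s,q}(\Omega)}\le C(\Omega,n,s,q)\|u\|_{W^{1,q}(\Omega)}$, which is the claim. The main place where care is needed is the dependence on $q$ and $s$ in the constants coming from the two radial integrals; choosing a fixed $\delta$ (independent of $u$) ensures that all constants depend only on $\Omega$, $n$, $s$, and $q$.
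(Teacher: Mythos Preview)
Your proof is correct and follows exactly the approach indicated in the paper: the paper does not supply its own argument but refers to \cite{frac}, Proposition~2.2, noting only that ``the idea applies an extension property of $\Omega$ so that we can extend functions from $W^{1,q}(\Omega)$ to $W^{1,q}(\mathbb{R}^n)$ and that the extension operator is bounded.'' Your extension plus near/far splitting of the Gagliardo integral is precisely the argument of that reference, so there is nothing to add beyond observing that the constant naturally depends on $q$ as well (the paper's statement writes $C=C(\Omega,n,s)$, but your accounting is the accurate one).
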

For the fractional Sobolev spaces with zero boundary value, the next embedding result follows from [\citealp{frac2}, Lemma 2.1]. The fundamental difference of it compared to \cref{embedding} is that the result holds for any bounded domain (without any smoothness condition on the boundary), since for the Sobolev spaces with zero boundary value, we always have a zero extension to the complement.
\begin{lemma}{\label{embedding2}} Let $\Omega$ be a bounded domain in $\mathbb{R}^n$ and $0<s<1$. Then $\exists\, C=C(n, s, \Omega)>0$ such that
\begin{equation*}
\int_{\mathbb{R}^n} \int_{\mathbb{R}^n} \frac{|u(x)-u(y)|^q}{|x-y|^{n+sq}} d x d y \leq C \int_{\Omega}|\nabla u|^qd x,
\end{equation*}
for every $u \in W_0^{1,q}(\Omega)$. Here, we consider the zero extension of $u$ to the complement of $\Omega$.
\end{lemma}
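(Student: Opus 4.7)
\textbf{Proof proposal for \cref{embedding2}.} The strategy is to reduce the integral over $\mathbb{R}^n\times\mathbb{R}^n$ to an integral over a bounded smooth set, so that we may invoke \cref{embedding} together with a Poincaré-type argument. Since $\Omega$ is bounded, pick an open ball $\Omega'$ with $C^{0,1}$ boundary such that $\Omega\Subset\Omega'$, and let $\eta:=\dist(\Omega,\partial\Omega')>0$. Denote by $\tilde u$ the extension of $u$ by zero to $\mathbb{R}^n$; since $u\in W_0^{1,q}(\Omega)$, we have $\tilde u\in W_0^{1,q}(\Omega')\subset W^{1,q}(\Omega')$ with $\|\nabla\tilde u\|_{L^q(\Omega')}=\|\nabla u\|_{L^q(\Omega)}$, and moreover the Poincaré inequality on $\Omega$ gives $\|u\|_{L^q(\Omega)}\leq C(\Omega)\|\nabla u\|_{L^q(\Omega)}$.

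Next, I would split
\begin{equation*}
\mathbb{R}^n\times\mathbb{R}^n=(\Omega'\times\Omega')\cup(\Omega'\times(\Omega')^c)\cup((\Omega')^c\times\Omega')\cup((\Omega')^c\times(\Omega')^c),
\end{equation*}
and handle each piece separately. The fourth piece contributes zero because $\tilde u\equiv 0$ on $(\Omega')^c$. On $\Omega'\times\Omega'$, \cref{embedding} applied to $\tilde u$ on $\Omega'$ gives
\begin{equation*}
\int_{\Omega'}\int_{\Omega'}\frac{|\tilde u(x)-\tilde u(y)|^q}{|x-y|^{n+sq}}\,dx\,dy\leq C\|\tilde u\|_{W^{1,q}(\Omega')}^q\leq C\|\nabla u\|_{L^q(\Omega)}^q,
\end{equation*}
where in the last step I use Poincaré together with $\tilde u\equiv 0$ on $\Omega'\setminus\Omega$.

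The two mixed pieces are equal by symmetry, so it suffices to estimate the integral over $\Omega'\times(\Omega')^c$. There $\tilde u(y)=0$, and the integrand is supported in $x\in\Omega$ (elsewhere $\tilde u(x)=0$ as well). For any such $x\in\Omega$ and $y\in(\Omega')^c$ we have $|x-y|\geq\eta$, hence
\begin{equation*}
\int_{(\Omega')^c}\frac{dy}{|x-y|^{n+sq}}\leq\int_{|z|\geq\eta}\frac{dz}{|z|^{n+sq}}=\frac{n\omega_n}{sq}\,\eta^{-sq}=:C(n,s,\Omega,\Omega').
\end{equation*}
Integrating $|u(x)|^q$ against this constant and applying Poincaré once more yields the same bound $C\|\nabla u\|_{L^q(\Omega)}^q$. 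Summing the four contributions completes the proof.

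The argument is essentially routine; the only mildly delicate point is choosing an auxiliary domain $\Omega'$ with \emph{smooth} boundary (required to apply \cref{embedding}, which needs a $C^{0,1}$ boundary) even when $\Omega$ itself has no regularity, and this is why taking a ball around $\Omega$ is convenient.
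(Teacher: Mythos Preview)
Your argument is correct. The paper does not supply its own proof of this lemma; it simply records that the result ``follows from [\citealp{frac2}, Lemma 2.1]'' and moves on. Your decomposition into $\Omega'\times\Omega'$, the two symmetric mixed pieces, and the exterior piece, together with the use of \cref{embedding} on the enlarged smooth set $\Omega'$ and the tail estimate $\int_{|z|\ge\eta}|z|^{-n-sq}\,dz<\infty$, is precisely the standard way this inequality is established, and is in the spirit of the cited reference.

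One minor remark: you invoke Poincar\'e ``on $\Omega$'', but the version stated in the paper (\cref{p}) carries a $C^1$-boundary hypothesis, whereas the point of \cref{embedding2} is that $\Omega$ is an arbitrary bounded domain. This is harmless, since the Friedrichs--Poincar\'e inequality for $W_0^{1,q}$ on any bounded open set is elementary; alternatively, you can simply apply \cref{p} on the ball $\Omega'$ to $\tilde u\in W_0^{1,q}(\Omega')$, which gives the same bound and stays entirely within the paper's stated toolbox.
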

We now proceed with the basic Poincar\'{e} inequality, which can be found in [\citealp{LCE}, Chapter 5, Section 5.8.1].
\begin{lemma}{\label{p}}
  Let $\Omega\subset \mathbb{R}^n$ be a bounded domain with $ {C}^1$ boundary and $q\geq 1$. Then there exists a positive constant $C>0$ depending only on $n$ and $ \Omega$, such that \begin{equation*} 
  \int_\Omega |u|^q d x\leq C\int_\Omega |\nabla u|^q d x, \qquad\forall u\in W^{1,q}_0(\Omega).
  \end{equation*}
  Specifically if we take $\Omega=B_{\bar r}$, then we will get for all $u\in W^{1,q}(B_{\bar r})$,
  \begin{equation*}
  \fint_{B_{\bar r}}\left|u-(u)_{B_{\bar r}}\right|^q d x \leq c {\bar r}^q \fint_{B_{\bar r}} |\nabla u|^qd x,
  \end{equation*}
  where $c$ is a constant depending only on $n$, and $(u)_{B_{\bar r}}$ denotes the average of $u$ in $B_{\bar r}$, and $B_{\bar r}$ denotes a ball of radius ${\bar r}$ centered at $x_0\in \mathbb{R}^n$.% Here, $\fint$ denotes the average integration.
 \end{lemma}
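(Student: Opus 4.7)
The plan is to treat the two inequalities separately, since they admit quite different arguments, both of them classical. Throughout I would reduce to the case $u \in C_c^\infty$ or $C^\infty$ by the density results already recorded for $W^{1,q}_0(\Omega)$ and $W^{1,q}(B_{\bar r})$.

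For the first assertion, I would prove the stronger statement $\|u\|_{L^q(\Omega)}\le C\,\|\nabla u\|_{L^q(\Omega)}$ for $u\in C_c^\infty(\Omega)$ and then pass to the limit. Since $\Omega$ is bounded, there is a slab $\{|x_1|\le M\}$ containing $\Omega$. Extending $u$ by zero to this slab, the fundamental theorem of calculus gives
\begin{equation*}
u(x)=\int_{-M}^{x_1}\partial_{x_1}u(t,x_2,\dots,x_n)\,dt,
\end{equation*}
and Hölder's inequality in the $x_1$-variable yields $|u(x)|^q\le (2M)^{q-1}\int_{-M}^{M}|\partial_{x_1}u|^q\,dt$. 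Integrating over $\Omega$ and absorbing constants produces the desired bound with $C=C(n,\Omega)$. This argument uses nothing about the smoothness of $\partial\Omega$; the $C^1$ hypothesis is only needed insofar as it guarantees the usual density and extension properties.

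For the Poincaré–Wirtinger inequality on a ball, my plan is a scaling plus compactness argument. First I would reduce to $\bar r=1$: if $v(y)=u(x_0+\bar r y)$ on $B_1$, then $(v)_{B_1}=(u)_{B_{\bar r}}$ and a change of variables shows that the inequality on $B_{\bar r}$ with constant $c\bar r^q$ is equivalent to the inequality on $B_1$ with constant $c$. So it suffices to prove $\fint_{B_1}|v-(v)_{B_1}|^q\,dx\le c\fint_{B_1}|\nabla v|^q\,dx$ for $v\in W^{1,q}(B_1)$. I would argue by contradiction: if the inequality fails, there are $v_k\in W^{1,q}(B_1)$ with $(v_k)_{B_1}=0$, $\|v_k\|_{L^q(B_1)}=1$, and $\|\nabla v_k\|_{L^q(B_1)}\to 0$. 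Thus $\{v_k\}$ is bounded in $W^{1,q}(B_1)$, and by the Rellich–Kondrachov theorem a subsequence converges in $L^q(B_1)$ to some $v$ with $\|v\|_{L^q(B_1)}=1$ and $(v)_{B_1}=0$. The weak lower semicontinuity of the Dirichlet energy forces $\nabla v\equiv 0$, so $v$ is constant on the connected set $B_1$; combined with $(v)_{B_1}=0$ this gives $v\equiv 0$, contradicting $\|v\|_{L^q(B_1)}=1$.

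Since the result is cited from Evans' textbook and both arguments are textbook-standard, there is no serious obstacle; the only subtlety worth flagging is the scaling check that converts the unit-ball constant into the factor $\bar r^q$ on $B_{\bar r}$, and the mild care needed when passing from $C^\infty$ to $W^{1,q}_0$ or $W^{1,q}$ via density. If desired, one could also avoid compactness and give a direct integral-representation proof of the Wirtinger inequality on balls, but the contradiction argument is cleaner and applies uniformly to all $q\ge 1$.
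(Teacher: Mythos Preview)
Your proposal is correct and follows exactly the standard textbook arguments. The paper does not give its own proof of this lemma at all: it simply records the statement and refers to Evans' book (Chapter~5, Section~5.8.1), whose proofs are precisely the slab/fundamental-theorem-of-calculus argument for $W^{1,q}_0$ and the Rellich--Kondrachov compactness-by-contradiction argument for the Poincar\'e--Wirtinger inequality on balls, together with the scaling step you describe. So there is nothing to compare; you have supplied the details the paper omits by citation.
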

Using \cref{embedding2}, and the above Poincar\'e inequality, we observe that the following norm on the space $W^{1,q}_0(\Omega)$ defined by 
 \begin{equation*}
\|u\|_{W^{1,q}_0(\Omega)}=\left(\int_\Omega |\nabla u|^q d x +\int_{\mathbb{R}^n} \int_{\mathbb{R}^n} \frac{|u(x)-u(y)|^q}{|x-y|^{n+sq}} d x d y \right)^{1/q},%{\frac{1}{q}},
\end{equation*}
is equivalent to the norm
 \begin{equation*}
\|u\|_{W^{1,q}_0(\Omega)}=\left(\int_\Omega |\nabla u|^q d x  \right)^{1/q}.%{\frac{1}{q}} .     
 \end{equation*}
The following is a version of fractional Poincar\'{e}.
\begin{lemma}{\label{fracpoin}}
Let $\Omega\subset \mathbb{R}^n$ be a bounded domain with $ {C}^1$ boundary, $s \in(0,1)$ and $q\geq 1$. If $u \in W^{s,q}_0(\Omega)$, then
\begin{equation*}
\int_\Omega |u|^q d x \leq c\int_{\Omega} \int_{\Omega} \frac{|u(x)-u(y)|^q}{|x-y|^{n+sq}} d x d y ,
\end{equation*}
holds with $c \equiv c(n, s,\Omega)$.
\end{lemma}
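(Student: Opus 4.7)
The plan is a contradiction argument exploiting the compactness of the embedding $W^{s,q}(\Omega) \hookrightarrow L^q(\Omega)$, which holds because $\partial\Omega \in C^1$ (see Proposition~2.2 of \cite{frac} together with the standard fractional Rellich--Kondrachov theorem). Assume the inequality fails; then for every $k \in \mathbb{N}$ one can find $u_k \in W^{s,q}_0(\Omega)$ with $\|u_k\|_{L^q(\Omega)} = 1$ and
\begin{equation*}
\int_\Omega\int_\Omega \frac{|u_k(x)-u_k(y)|^q}{|x-y|^{n+sq}}\,dx\,dy < \frac{1}{k}.
\end{equation*}
The family $\{u_k\}$ is thus bounded in $W^{s,q}(\Omega)$, so up to a subsequence (not relabelled) $u_k \to u$ strongly in $L^q(\Omega)$ with $\|u\|_{L^q(\Omega)} = 1$.

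The quasi-triangle inequality for the Gagliardo seminorm, applied together with the vanishing of the seminorms of the $u_k$'s, shows that $\{u_k\}$ is in fact Cauchy in $W^{s,q}(\Omega)$; hence $u_k \to u$ in $W^{s,q}(\Omega)$ and by lower semicontinuity the $\Omega\times\Omega$ Gagliardo seminorm of $u$ vanishes. Connectedness of $\Omega$ then forces $u \equiv c$ a.e.\ on $\Omega$ for some constant $c$, and the normalization $\|u\|_{L^q(\Omega)} = 1$ yields $|c|^q |\Omega| = 1$; in particular $c \neq 0$.

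The main obstacle is to contradict $c \neq 0$ using the zero boundary information encoded in $u_k \in W^{s,q}_0(\Omega)$. Extending each $u_k$ (and $u$) by zero one has $u_k \in W^{s,q}(\mathbb{R}^n)$ and $u_k \to c\,\chi_\Omega$ strongly in $L^q(\mathbb{R}^n)$. Invoking the characterization of $W^{s,q}_0(\Omega)$ as the closure of $C_c^\infty(\Omega)$ in $W^{s,q}(\mathbb{R}^n)$, the $L^q$-limit inherits a vanishing nonlocal trace across $\partial\Omega$, a property the nonzero constant $c\chi_\Omega$ cannot enjoy; this forces $c = 0$, the desired contradiction. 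A more transparent alternative, valid whenever $sq < n$, instead combines the fractional Sobolev embedding $W^{s,q}(\mathbb{R}^n)\hookrightarrow L^{nq/(n-sq)}(\mathbb{R}^n)$ applied to the zero extension with H\"older's inequality on the bounded $\Omega$; however, this route controls $\|u\|_{L^q(\Omega)}$ only by the larger $\mathbb{R}^n\times\mathbb{R}^n$ Gagliardo seminorm, rather than by the smaller $\Omega\times\Omega$ seminorm appearing in the statement.
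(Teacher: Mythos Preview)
The paper states this lemma without proof, so there is nothing to compare against; but your argument has a genuine gap at the decisive step, and the gap cannot be closed because the inequality as written---with the seminorm taken only over $\Omega\times\Omega$ rather than $\mathbb{R}^n\times\mathbb{R}^n$---is \emph{false} whenever $sq<1$.

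Indeed, for $sq<1$ and $\partial\Omega\in C^1$ one has $\chi_\Omega\in W^{s,q}(\mathbb{R}^n)$ (the integral $\int_\Omega\int_{\Omega^c}|x-y|^{-n-sq}\,dx\,dy$ is finite exactly when $sq<1$), so by the paper's own definition $c\,\chi_\Omega\in W^{s,q}_0(\Omega)$ for every constant $c$; equivalently, it is classical that $C_c^\infty(\Omega)$ is dense in $W^{s,q}(\Omega)$ in this range, so the closure definition also contains all constants. For $u\equiv c\neq 0$ the right-hand side of the claimed inequality vanishes while the left does not. This is precisely why your key sentence---that ``the $L^q$-limit inherits a vanishing nonlocal trace across $\partial\Omega$, a property the nonzero constant $c\chi_\Omega$ cannot enjoy''---is unjustified: you have only established $u_k\to c$ in $W^{s,q}(\Omega)$, not in $W^{s,q}(\mathbb{R}^n)$, and for $sq<1$ there is no trace obstruction whatsoever to $c\,\chi_\Omega\in W^{s,q}_0(\Omega)$.

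You essentially flag this yourself in your final paragraph: the standard route via the fractional Sobolev embedding on $\mathbb{R}^n$ yields the correct Poincar\'e inequality with the $\mathbb{R}^n\times\mathbb{R}^n$ seminorm on the right, and that is almost certainly what is intended here (it is also all that is actually needed downstream, where the nonlocal term is controlled through \cref{embedding2}, which already carries the full $\mathbb{R}^n\times\mathbb{R}^n$ seminorm). The $\Omega\times\Omega$ version holds only under the additional restriction $sq>1$, in which case a trace theorem excludes nonzero constants from $W^{s,q}_0(\Omega)$ and your compactness argument can then be completed.
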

In view of \cref{fracpoin}, we observe that the Banach space $W_0^{s, q}(\Omega)$ can be endowed with the norm
\begin{equation*}
\|u\|_{W_0^{s, q}(\Omega)}=\left(\int_{\Omega} \int_{\Omega} \frac{|u(x)-u(y)|^q}{|x-y|^{n+sq}} d x d y\right)^{1/q},%{\frac{1}{q}},
\end{equation*}
which is equivalent to that of $\|u\|_{W^{s, q}(\Omega)}$. 
Now, we define the local spaces as
\begin{equation*}
     W_{\operatorname{loc }}^{1, q}(\Omega)=\left\{u: \Omega \rightarrow \mathbb{R}: u \in L^q(K), \int_K |\nabla u|^q d x<\infty, \text { for every } K \subset \subset \Omega\right\} ,
\end{equation*}
and 
\begin{equation*}
     W_{\operatorname{loc }}^{s, q}(\Omega)=\left\{u: \Omega \rightarrow \mathbb{R}: u \in L^q(K), \int_K \int_K \frac{|u(x)-u(y)|^q}{|x-y|^{n+ sq}} d x d y<\infty, \text { for every } K \subset \subset \Omega\right\} .
\end{equation*}
Now for $n>p$, we define the critical Sobolev exponent as $p^*=\frac{np}{n-p}$, then we get the following embedding result for any bounded open subset $\Omega$ of class $ {C}^1$ in $\mathbb{R}^n$, see for details [\citealp{LCE}, Chapter 5].
\begin{theorem}{\label{Sobolev embedding}} Let $n>p$. Then, $\exists\, C\equiv C(n,\Omega)>0$, such that for all $u \in  {C}_c^{\infty}(\Omega)$
\begin{equation*}
    \|u\|_{L^{p^*}(\Omega)}^p \leq C \int_{\Omega} |\nabla u|^p d x.
\end{equation*}
Moreover the inclusion map
\begin{equation*}
W_0^{1, p}(\Omega) \hookrightarrow L^r(\Omega)    
\end{equation*}
is continuous for $1 \leq r \leq p^*$ and the above embedding is compact except for $r=p^*$.
\end{theorem}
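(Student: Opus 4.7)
The plan is to establish the Gagliardo–Nirenberg–Sobolev inequality first on $\mathbb{R}^n$ for smooth compactly supported functions, then pass to $W^{1,p}_0(\Omega)$ by density, and finally derive the continuous and compact embedding statements. The starting point is the one-dimensional identity $u(x) = \int_{-\infty}^{x_i} \partial_i u(\ldots, t, \ldots)\, dt$ for $u \in C_c^\infty(\mathbb{R}^n)$ and each coordinate $i = 1, \ldots, n$, which yields $|u(x)|^{n/(n-1)} \leq \prod_{i=1}^n \left(\int_{\mathbb{R}} |\partial_i u|\, dt\right)^{1/(n-1)}$. Integrating this inequality iteratively in each variable and applying the generalized Hölder inequality with $n-1$ factors at each step gives the $p=1$ case:
\begin{equation*}
\|u\|_{L^{n/(n-1)}(\mathbb{R}^n)} \leq \frac{1}{\sqrt{n}} \prod_{i=1}^n \|\partial_i u\|_{L^1(\mathbb{R}^n)}^{1/n} \leq C \|\nabla u\|_{L^1(\mathbb{R}^n)}.
\end{equation*}

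Next, to handle $1 < p < n$, I would apply the $p=1$ inequality to the test function $v = |u|^{\gamma}$ with $\gamma = \frac{p(n-1)}{n-p}$, chosen so that $\gamma \cdot \frac{n}{n-1} = p^*$. A direct computation gives $\nabla v = \gamma |u|^{\gamma - 1} \mathrm{sgn}(u) \nabla u$, and applying Hölder's inequality with exponents $p$ and $p' = \frac{p}{p-1}$ to $\int |u|^{\gamma - 1} |\nabla u|\, dx$, combined with the fact that $(\gamma - 1) p' = p^*$, yields after rearrangement the inequality $\|u\|_{L^{p^*}(\mathbb{R}^n)} \leq C(n, p) \|\nabla u\|_{L^p(\mathbb{R}^n)}$ for all $u \in C_c^\infty(\mathbb{R}^n)$. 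Raising to the power $p$ and specializing to functions supported in $\Omega$ (extended by zero) gives the first inequality in the theorem. The extension from $C_c^\infty(\Omega)$ to $u \in W^{1,p}_0(\Omega)$ then follows from the definition of $W^{1,p}_0(\Omega)$ as the closure of $C_c^\infty(\Omega)$: take an approximating sequence $u_k \to u$ in $W^{1,p}_0(\Omega)$, which is Cauchy also in $L^{p^*}(\Omega)$ by the inequality, and pass to the limit.

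For the continuous embedding $W^{1,p}_0(\Omega) \hookrightarrow L^r(\Omega)$ with $1 \leq r \leq p^*$, I would exploit the boundedness of $\Omega$ and apply Hölder's inequality with exponents $p^*/r$ and its conjugate to obtain
\begin{equation*}
\|u\|_{L^r(\Omega)} \leq |\Omega|^{\frac{1}{r} - \frac{1}{p^*}} \|u\|_{L^{p^*}(\Omega)} \leq C(n, p, \Omega) \|\nabla u\|_{L^p(\Omega)},
\end{equation*}
which gives continuity. For compactness in the range $1 \leq r < p^*$, I would invoke Rellich–Kondrachov: given a bounded sequence $\{u_k\} \subset W^{1,p}_0(\Omega)$, extend by zero to $\mathbb{R}^n$, mollify to get $u_k^\epsilon = u_k \ast \eta_\epsilon$, and show that $\{u_k^\epsilon\}_k$ is uniformly bounded and equicontinuous on $\overline{\Omega}$ (so precompact in $C(\overline{\Omega})$, hence in $L^r(\Omega)$, by Arzelà–Ascoli) while the mollification error $\|u_k - u_k^\epsilon\|_{L^1(\Omega)}$ can be made uniformly small in $k$, using $\|u_k - u_k^\epsilon\|_{L^1} \leq \epsilon \|\nabla u_k\|_{L^1}$. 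Interpolating between $L^1$ and $L^{p^*}$ then bounds $\|u_k - u_k^\epsilon\|_{L^r}$ uniformly in $k$ by a quantity vanishing as $\epsilon \to 0$, and a standard diagonal argument extracts an $L^r$-convergent subsequence.

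The main obstacle is the compactness step, particularly the sharp distinction between $r < p^*$ (compact) and $r = p^*$ (only continuous). The failure of compactness at $r = p^*$ is reflected in scale invariance: the family $u_\lambda(x) = \lambda^{(n-p)/p} u(\lambda x)$ preserves the $W^{1,p}$ norm and the $L^{p^*}$ norm, so concentration along such scalings provides bounded sequences with no $L^{p^*}$-convergent subsequence. For the positive compactness statement, the delicate point is the uniformity in $k$ of the mollification error estimate, which is where the $W^{1,p}$ bound (rather than merely $L^p$) is essential. Since this is a well-known classical result, I would ultimately cite [Evans, Chapter 5] for the technical execution, as indicated in the excerpt.
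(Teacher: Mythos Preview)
Your sketch is correct and is precisely the standard Gagliardo--Nirenberg--Sobolev/Rellich--Kondrachov argument. Note, however, that the paper does not actually prove this statement at all: it simply refers the reader to \cite{LCE}, Chapter~5 (Evans), so there is nothing to compare beyond the fact that your outline is exactly the proof found in that reference.
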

Similarly, for $n>ps$, we define the fractional Sobolev critical exponent as $p_s^*=\frac{ np}{n-p s}$. The following result is a fractional version of the Sobolev inequality (\cref{Sobolev embedding}) which also implies a continuous embedding of $W_0^{s,p}(\Omega)$ in the critical Lebesgue space $L^{p_s^*}(\Omega)$. One can see the proof in \cite{frac}.
 \begin{theorem}{\label{Fractional Sobolev embedding}} Let $0<s<1$ be such that $n>p s$. Then, there exists a constant $S(n, s)$ depending only on $n$ and $s$, such that for all $u \in  {C}_c^{\infty}(\Omega%\mathbb{R}^n
 )$
\begin{equation*}
\|u\|_{L^{p_s^*}(\Omega%\mathbb{R}^n
)} \leq S(n, s) \left(\int_{\Omega}\int_{\Omega%\mathbb{R}^{2 n}
} \frac{|u(x)-u(y)|^p}{|x-y|^{n+sp}} d x d y \right)^{1/p}.    
\end{equation*}
\end{theorem}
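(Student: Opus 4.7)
Since $u \in C_c^\infty(\Omega)$, extending $u$ by zero to $\mathbb{R}^n$ produces a smooth compactly supported function with $\|u\|_{L^{p^*_s}(\Omega)} = \|u\|_{L^{p^*_s}(\mathbb{R}^n)}$. The plan is first to prove the global fractional Sobolev inequality $\|u\|_{L^{p^*_s}(\mathbb{R}^n)} \le C\, [u]_{W^{s,p}(\mathbb{R}^n)}$, and then to compare the Gagliardo seminorms over $\Omega \times \Omega$ and $\mathbb{R}^n \times \mathbb{R}^n$. The ``missing'' piece in the latter reduction, namely
\[
\int_\Omega \int_{\mathbb{R}^n \setminus \Omega} \frac{|u(x)|^p}{|x-y|^{n+sp}}\, dy\, dx,
\]
is bounded by a multiple of $\|u\|_{L^p(\Omega)}^p$ (using that $\mathrm{supp}\, u$ is compactly contained in $\Omega$, so $|x-y|$ is uniformly bounded below on the region of integration), and in turn by $[u]_{W^{s,p}(\Omega)}^p$ via the fractional Poincar\'e inequality \cref{fracpoin}.

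For the global inequality on $\mathbb{R}^n$, the plan is to use the dyadic level-set argument of Di Nezza--Palatucci--Valdinoci. Assuming $u \ge 0$, set $A_k = \{u > 2^k\}$ for $k \in \mathbb{Z}$ and introduce the truncations
\[
u_k = \min\{(u-2^k)_+, 2^k\},
\]
so that $u = \sum_k u_k$ pointwise, $u_k$ is supported in $A_k$, and $u_k \equiv 2^k$ on $A_{k+1}$. Chebyshev's inequality yields $2^{k \cdot 1^*_s}\,|A_{k+1}| \lesssim \|u_k\|_{L^{1^*_s}}^{1^*_s}$, and controlling $\|u_k\|_{L^{1^*_s}}$ by $[u_k]_{W^{s,1}(\mathbb{R}^n)}$ via a direct pointwise estimate, then summing in $k$, establishes the case $p=1$. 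For general $p \ge 1$, one applies the $p=1$ inequality to $v = |u|^\gamma$ with $\gamma = p(n-s)/(n-ps)$, chosen so that $\gamma \cdot 1^*_s = p^*_s$ and hence $\|v\|_{L^{1^*_s}}^{1^*_s} = \|u\|_{L^{p^*_s}}^{p^*_s}$. The pointwise inequality $\bigl||a|^\gamma - |b|^\gamma\bigr| \le \gamma (|a| \vee |b|)^{\gamma-1} |a - b|$ combined with H\"older (exponents $p$ and $p'$) bounds $[v]_{W^{s,1}(\mathbb{R}^n)}$ by $[u]_{W^{s,p}(\mathbb{R}^n)} \cdot \|u\|_{L^{p^*_s}(\mathbb{R}^n)}^{\gamma-1}$; absorbing the last factor back into the left-hand side yields the desired bound.

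The main obstacle will be the subadditivity-type control $\sum_{k} [u_k]_{W^{s,1}(\mathbb{R}^n)} \lesssim [u]_{W^{s,1}(\mathbb{R}^n)}$ used in the case $p=1$. This requires partitioning $\mathbb{R}^n \times \mathbb{R}^n$ into regions indexed by the dyadic layer containing $u(x)$ and $u(y)$, then carefully pairing and telescoping the contributions so that cross-terms do not accumulate. The H\"older bootstrap from $p=1$ to general $p$ is also delicate, since one must arrange the exponents to avoid the non-integrable kernel $|x-y|^{-n}$; this is handled by the specific choice of $\gamma$ above. Once these two ingredients are in place, the remaining steps are routine Chebyshev and H\"older bookkeeping.
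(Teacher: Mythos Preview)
The paper does not give its own proof of this theorem; it simply cites \cite{frac} (Di~Nezza--Palatucci--Valdinoci), and your proposed dyadic level-set argument for the global inequality on $\mathbb{R}^n$ is exactly the proof found there. So on the main point your approach coincides with what the paper invokes.

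There is, however, a genuine gap in your reduction from the $\mathbb{R}^n\times\mathbb{R}^n$ seminorm to the $\Omega\times\Omega$ seminorm. You bound the cross term
\[
\int_\Omega\int_{\mathbb{R}^n\setminus\Omega}\frac{|u(x)|^p}{|x-y|^{n+sp}}\,dy\,dx
\]
by observing that $|x-y|\ge \operatorname{dist}(\operatorname{supp} u,\partial\Omega)$ on the region of integration. That lower bound is correct for each fixed $u\in C_c^\infty(\Omega)$, but the resulting constant depends on $\operatorname{dist}(\operatorname{supp} u,\partial\Omega)$ and hence on $u$ itself, not merely on $n$ and $s$. Passing through the fractional Poincar\'e inequality (\cref{fracpoin}) does not help: the constant there depends on $\Omega$. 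So your argument yields at best a constant $S(n,s,\Omega)$, not $S(n,s)$ as stated.

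In fact the statement as written, with the seminorm over $\Omega\times\Omega$ and a constant depending only on $n$ and $s$, is stronger than what is proved in \cite{frac}; the inequality there is with the full $\mathbb{R}^n\times\mathbb{R}^n$ seminorm on the right-hand side. Your global argument proves that version correctly; the discrepancy lies in the statement rather than in your method.
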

We now recall the following algebraic inequality that can be found in [\citealp{abdellaoui1}, Lemma 2.22].% and will be used extensively.
\begin{lemma}{\label{algebraic}}
\begin{enumerate}[label=\roman*)]
    \item Let $\alpha>0$. For every $x, y \geq 0$ one has
\begin{equation*}
    (x-y)(x^\alpha-y^\alpha) \geq \frac{4 \alpha}{(\alpha+1)^2}\left(x^{\frac{\alpha+1}{2}}-y^{\frac{\alpha+1}{2}}\right)^2.
\end{equation*}
\item Let $0<\alpha \leq 1$. For every $x, y \geq 0$ with $x \neq y$ one has
\begin{equation*}
    \frac{x-y}{x^\alpha-y^\alpha} \leq \frac{1}{\alpha}(x^{1-\alpha}+y^{1-\alpha}).
\end{equation*}
\item Let $\alpha \geq 1$. Then there exists a constant $C_\alpha$ depending only on $\alpha$ such that
\begin{equation*}
|x+y|^{\alpha-1}|x-y| \leq C_\alpha\left|x^\alpha-y^\alpha\right| .   
\end{equation*}    
\end{enumerate}
\end{lemma}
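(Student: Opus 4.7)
The plan is to treat the three inequalities separately, since each uses a different elementary tool. In all three parts I use the built-in symmetry to reduce to the case $x\geq y\geq 0$, handling the limiting case $y=0$ as a boundary check.

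For part (i), I exploit the integral representation
\[
x^{\frac{\alpha+1}{2}}-y^{\frac{\alpha+1}{2}}=\frac{\alpha+1}{2}\int_y^x t^{\frac{\alpha-1}{2}}\,dt,
\]
which is valid for $\alpha>0$ since then $\tfrac{\alpha-1}{2}>-1$ and the integrand is integrable at $0$. Applying the Cauchy--Schwarz inequality to the trivial splitting $t^{(\alpha-1)/2}=1\cdot t^{(\alpha-1)/2}$ gives
\[
\Bigl(\int_y^x t^{\frac{\alpha-1}{2}}\,dt\Bigr)^2\leq (x-y)\int_y^x t^{\alpha-1}\,dt=\frac{(x-y)(x^\alpha-y^\alpha)}{\alpha}.
\]
Squaring the first display, combining with the second, and rearranging yields the claim. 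The degenerate case $y=0$ corresponds to $4\alpha\leq(\alpha+1)^2$, i.e.\ $(\alpha-1)^2\geq 0$, so nothing is lost in the limit.

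For part (ii), I would apply the mean value theorem to $f(t)=t^\alpha$ on $[y,x]$ with $x>y>0$: there exists $c\in(y,x)$ such that $x^\alpha-y^\alpha=\alpha c^{\alpha-1}(x-y)$, hence
\[
\frac{x-y}{x^\alpha-y^\alpha}=\frac{c^{1-\alpha}}{\alpha}\leq \frac{x^{1-\alpha}}{\alpha}\leq\frac{1}{\alpha}\bigl(x^{1-\alpha}+y^{1-\alpha}\bigr),
\]
where I used $c\leq x$ together with $1-\alpha\geq 0$. The case $y=0$ collapses to the obvious inequality $x^{1-\alpha}\leq\tfrac{1}{\alpha}x^{1-\alpha}$, valid because $\alpha\leq 1$.

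For part (iii), I assume $x\geq y\geq 0$ with $x>0$ (the case $x=0$ is trivial) and set $t=y/x\in[0,1]$; dividing through by $x^\alpha$ reduces the inequality to showing $(1+t)^{\alpha-1}(1-t)\leq C_\alpha(1-t^\alpha)$. The decisive observation is that $\alpha\geq 1$ forces $t^\alpha\leq t$ on $[0,1]$, and hence $1-t^\alpha\geq 1-t$; combined with the obvious bound $(1+t)^{\alpha-1}\leq 2^{\alpha-1}$ this gives the estimate with $C_\alpha=2^{\alpha-1}$. The main obstacle, if any, is purely in part (i): identifying the correct Cauchy--Schwarz splitting that produces the sharp constant $\tfrac{4\alpha}{(\alpha+1)^2}$. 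Parts (ii) and (iii) then follow immediately from the mean value theorem and an elementary monotonicity observation, respectively.
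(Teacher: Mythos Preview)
Your proof is correct in all three parts. The paper itself does not supply a proof of this lemma; it simply cites it from \cite{abdellaoui1}, Lemma 2.22, so there is no argument to compare against. Your Cauchy--Schwarz computation in (i) does yield the sharp constant $\tfrac{4\alpha}{(\alpha+1)^2}$ (and in fact already covers the case $y=0$, since both improper integrals converge for $\alpha>0$, making the separate boundary check redundant though harmless); the mean value theorem argument in (ii) and the homogeneity reduction with the explicit constant $C_\alpha=2^{\alpha-1}$ in (iii) are clean and complete.
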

Next, we state the algebraic inequality from [\citealp{ineq}, Lemma 2.1].
\begin{lemma}{\label{p case}} Let $1<p<\infty$. Then for any $a,b\in \mathbb{R}^n$, there exists a constant $C\equiv C(p)>0$, such that
\begin{equation*}
    \iprod{|a|^{p-2}a-|b|^{p-2}b}{a-b}\geq C\frac{|a-b|^2}{(|a|+|b|)^{2-p}}.
\end{equation*}
\end{lemma}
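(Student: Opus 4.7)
The plan is to reduce the inequality to a Hessian estimate along the segment joining $a$ and $b$. Setting $F(x)=|x|^p/p$, one has $\nabla F(x)=|x|^{p-2}x$, and on the segment away from the origin $F\in C^2$ with
\[ D^2 F(x)=|x|^{p-2}I+(p-2)|x|^{p-4}x\otimes x. \]
Hence (after handling the passage through the origin by approximation, e.g. replacing $a,b$ by $a+\varepsilon e,\,b+\varepsilon e$ and passing to the limit) I would write
\[ |a|^{p-2}a-|b|^{p-2}b=\int_0^1 D^2 F\bigl(b+t(a-b)\bigr)(a-b)\,dt, \]
and then pair with $a-b$.

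The next step is a pointwise bound on the integrand. For any $v\in\mathbb{R}^n$ and $x\ne 0$,
\[ \langle D^2 F(x)v,v\rangle=|x|^{p-2}|v|^2+(p-2)|x|^{p-4}(x\cdot v)^2. \]
When $p\ge 2$ the second term is nonnegative; when $1<p<2$ I would use $(x\cdot v)^2\le|x|^2|v|^2$ to absorb it. Either way
\[ \langle D^2 F(x)v,v\rangle\ge \min(1,p-1)\,|x|^{p-2}|v|^2, \]
so that, with $c_p:=\min(1,p-1)>0$,
\[ \langle |a|^{p-2}a-|b|^{p-2}b,\,a-b\rangle\ge c_p\,|a-b|^2\int_0^1 |(1-t)b+ta|^{p-2}\,dt. \]

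It remains to prove that the integral is bounded below by a constant times $(|a|+|b|)^{p-2}$, which is where the two ranges of $p$ have to be treated separately. For $1<p\le 2$ it is immediate: $(1-t)b+ta$ has length at most $|a|+|b|$, and the exponent $p-2\le 0$ reverses the inequality, giving $\int_0^1|(1-t)b+ta|^{p-2}\,dt\ge (|a|+|b|)^{p-2}$. For $p>2$ I would invoke the reverse triangle inequality $|(1-t)b+ta|\ge\bigl|(1-t)|b|-t|a|\bigr|$, which together with monotonicity of $x\mapsto x^{p-2}$ on $[0,\infty)$ yields
\[ \int_0^1|(1-t)b+ta|^{p-2}\,dt\ge \int_0^1\bigl|t(|a|+|b|)-|b|\bigr|^{p-2}\,dt=(|a|+|b|)^{p-2}\,\frac{s^{p-1}+(1-s)^{p-1}}{p-1}, \]
where $s=|b|/(|a|+|b|)\in[0,1]$; since $p-1\ge 1$, the function $s^{p-1}+(1-s)^{p-1}$ attains its minimum $2^{2-p}$ at $s=1/2$, which gives the required lower bound with constant $2^{2-p}/(p-1)>0$.

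Combining the two cases produces the claim with $C=C(p)>0$. The degenerate case $a=b$ is trivial (both sides vanish) and $a=b=0$ follows by continuity; the main obstacle is really the $p>2$ case, where the integrand can vanish along the segment if it crosses the origin, but the explicit computation shows the vanishing is integrable and yields a positive constant depending only on $p$.
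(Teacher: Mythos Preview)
Your argument is correct. The Hessian lower bound $\langle D^2F(x)v,v\rangle\ge\min(1,p-1)\,|x|^{p-2}|v|^2$ is standard, the integral representation is valid after the approximation you describe (for $1<p<2$ the map $x\mapsto|x|^{p-2}x$ is $(p-1)$-H\"older and hence absolutely continuous along segments, so the fundamental theorem of calculus applies with the a.e.\ derivative), and your treatment of $\int_0^1|(1-t)b+ta|^{p-2}\,dt$ in both regimes is accurate; in particular the $p>2$ computation via the reverse triangle inequality and the substitution leading to $(|a|+|b|)^{p-2}(s^{p-1}+(1-s)^{p-1})/(p-1)$ with minimum $2^{2-p}$ is clean.

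There is nothing to compare against in the paper itself: the lemma is merely stated and attributed to an external reference (Lemma~2.1 of the cited source), with no proof given. Your self-contained argument therefore goes beyond what the paper provides.
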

\subsection{\textbf{Weak Solutions and Main Results}}
We are interested in multiplicity of weak solutions to the problem 
\begin{equation}{\label{prob}}
\begin{split}
-\Delta_pu+(-\Delta)_p^s u&=\frac{\lambda}{u^{\gamma}}+u^r  \text { in } \Omega, \\u&>0 \text{ in } \Omega,\\u&=0  \text { in }\mathbb{R}^n \backslash \Omega;
\end{split}
\end{equation}
where $n>p>1$; $\lambda>0$, $0<\gamma<1$; $r\in(p-1,p^*-1).$
%We split two cases $0<\gamma<1$ and $\gamma>1$. For $0<\gamma<1$, we define:
\begin{definition}{\label{weak sol}}
    A function $u\in W^{1,p}_0(\Omega)$ is said to be a weak solution to \cref{prob} if $u>0$ in $\Omega$ such that for every $\omega\subset\subset \Omega$, there exists a positive constant $c(\omega)$ with $u\geq c(\omega)>0$ in $\omega$ and for all $\phi\in  {C}_c^\infty(\Omega)$, we have 
    \begin{equation*}
    \begin{array}{c}
        \int_\Omega|\nabla u|^{p-2}\nabla u\cdot\nabla \phi \,d x+\int_{\mathbb{R}^n}\int_{\mathbb{R}^n}\frac{|u(x)-u(y)|^{p-2}(u(x)-u(y))(\phi(x)-\phi(y))}{|x-y|^{n+sp}} d x d y\\=\lambda\int_\Omega \frac{\phi}{u^\gamma} d x +\int_\Omega u^r\phi \,d x .
        \end{array}
    \end{equation*}
\end{definition}
Further, consider the problem
\begin{equation}{\label{prob2}}
\begin{split}
-\Delta u+(-\Delta)^s u&=\frac{\lambda}{u^{\gamma}}+u^r  \text { in } \Omega, \\u&>0 \text{ in } \Omega,\\u&=0  \text { in }\mathbb{R}^n \backslash \Omega,
\end{split}
\end{equation}
where %$n>2$, 
$\lambda,\gamma>0$, $r>1%<2^*-1
$. We define the weak solution as:
\begin{definition}{\label{weaksol2}}
    A function $u \in W_{\operatorname{loc}}^{1,2}(\Omega) \cap L^{\infty}(\Omega)$ (with some suitable power of $u$ in $W^{1,2}_0(\Omega)$) such that $\frac{\phi}{u^\gamma} \in L^1(\Omega)$ for every $\phi \in W_0^{1,2}(\omega)$ is said to be a weak solution of \cref{prob2} if it satisfies
\begin{equation*}
\int_{\Omega} \nabla u \cdot\nabla \phi\, d x+\int_{\mathbb{R}^n}\int_{\mathbb{R}^n}\frac{(u(x)-u(y))(\phi(x)-\phi(y))}{|x-y|^{n+2s}} d x d y=\lambda\int_\Omega \frac{\phi}{u^\gamma} d x +\int_\Omega u^r\phi \,d x, %\quad \forall \phi \in W_0^{1,2}(\omega),
\end{equation*}
for all $\phi \in W_0^{1,2}(\omega)$ and for every open subset $\omega$ of $\Omega$, such that $\omega \subset \subset \Omega$.
\end{definition} 
The main results of this article are as follows;
\begin{theorem}{\label{mainth1}} Let $0<\gamma<1$ and $\Omega$ be a bounded domain with $ {C}^1$ boundary. Then there exists $\Lambda>0$ such that for all $\lambda\in(0,\Lambda),$ the problem \cref{prob}
admits at least two distinct weak solutions in the sense of \cref{weak sol}.
\end{theorem}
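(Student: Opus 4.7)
The plan is to follow the truncation/mountain pass strategy that the authors sketch in the introduction. For each $k\in\mathbb{N}$, I will first establish the existence of two distinct weak solutions $\underline{u}_k,\,\overline{u}_k$ of the approximated (non-singular) problem
\begin{equation*}
-\Delta_p u+(-\Delta)_p^s u=\frac{\lambda}{(u+\tfrac1k)^\gamma}+(u^+)^r \text{ in }\Omega,\quad u=0 \text{ in }\mathbb{R}^n\setminus\Omega,
\end{equation*}
and then, after obtaining uniform $W^{1,p}_0(\Omega)\cap L^\infty(\Omega)$ estimates independent of $k$, pass to the limit to recover two distinct weak solutions of \cref{prob}. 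Throughout I will invoke the regularity and comparison results from \cite{Biagiregularitymaximum,FaberKrahn,MG,valdinoci} to get a positive pointwise lower bound on each approximate solution on any $\omega\subset\subset\Omega$.

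For the first solution, I will construct a strict subsolution $\underline{w}$ (obtained, for instance, from the first eigenfunction of $-\Delta_p+(-\Delta)_p^s$ suitably scaled, using its boundary behaviour) and a supersolution $\overline{w}$ coming from the purely singular problem, so that the truncated energy functional
\begin{equation*}
J_k(u)=\frac{1}{p}\int_\Omega|\nabla u|^p\,dx+\frac{1}{p}\iint_{\mathbb{R}^{2n}}\frac{|u(x)-u(y)|^p}{|x-y|^{n+sp}}\,dx\,dy-\lambda\int_\Omega G_k(u)\,dx-\frac{1}{r+1}\int_\Omega (u^+)^{r+1}\,dx,
\end{equation*}
with $G_k$ the primitive of $1/(t+\tfrac1k)^\gamma$ restricted to the order interval $[\underline{w},\overline{w}]$, attains a minimizer $\underline{u}_k$ in this interval; this gives a local minimum. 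For the second solution I will verify that $J_k$ satisfies the mountain pass geometry: since $0<\gamma<1$ the singular term is integrable and of sub-$p$-growth, so $J_k$ has the strict local minimum just found, while $r>p-1$ makes $J_k(t\varphi)\to-\infty$ for a fixed $\varphi\geq0$, and standard truncations combined with \cref{Sobolev embedding,Fractional Sobolev embedding} yield a $(PS)$ sequence at the mountain pass level that converges (using \cref{p case} and boundedness) to a second critical point $\overline{u}_k\neq\underline{u}_k$. A smallness assumption $\lambda<\Lambda$ enters here to separate the mountain pass level from the local-minimum level, exactly as in Haitao \cite{Haitao} and Giacomoni et al.\ \cite{Giacomoni}.

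Next I will derive bounds for $\{\underline{u}_k\}$ and $\{\overline{u}_k\}$ uniform in $k$. Testing the equation with the solution itself and using Sobolev embedding, together with the subcritical exponent $r<p^*-1$, gives a uniform bound in $W^{1,p}_0(\Omega)$; a Moser iteration argument (combined with the comparison with the subsolution to absorb the singular term) yields a uniform $L^\infty$ bound. Up to a subsequence, I can then extract weak-$W^{1,p}_0$ and a.e.\ limits $\underline{u},\overline{u}$, both positive on compacta by the uniform lower bounds supplied by the strict subsolution, so that the singular term $\lambda/(u_k+\tfrac1k)^\gamma$ can be handled by dominated convergence on any $\omega\subset\subset\Omega$.

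The \textbf{main obstacle} will be the almost everywhere convergence of the gradients $\nabla u_k\to\nabla u$, which is needed to pass to the limit in the quasilinear and fractional $p$-Laplace terms. I will address this by a Boccardo–Murat type monotonicity argument adapted to the mixed operator: testing the difference of the equations satisfied by $u_k$ and $u$ with $(u_k-u)\phi^2$ for a cutoff $\phi$, and exploiting the monotonicity inequality in \cref{p case} for the local part and an analogous pointwise monotonicity of the nonlocal kernel, I will obtain $\nabla u_k\to\nabla u$ strongly in $L^p_{\mathrm{loc}}(\Omega)$ and a.e. The fractional term requires care because the nonlocal integral couples $\Omega$ with its complement; I will decompose it as in \cite{garain} to confine the new contributions to a uniformly controlled tail. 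Once gradient convergence is in hand, Vitali/Fatou combined with the uniform $L^\infty$ bound lets me pass to the limit in the weak formulation of \cref{weak sol}, producing two distinct weak solutions $\underline{u}\neq\overline{u}$ of \cref{prob}, where distinctness is preserved in the limit because the two sequences live at uniformly separated energy levels $J_k(\underline{u}_k)<0<c_k$ with the mountain pass level $c_k$ bounded below away from $J_k(\underline{u}_k)$.
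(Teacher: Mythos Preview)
Your outline is broadly on the right track---approximate, find two critical points at separated energy levels, derive uniform $W^{1,p}_0$ bounds, obtain a.e.\ gradient convergence, pass to the limit, and use the energy separation to conclude the limits are distinct---and this is exactly the paper's scheme. However, you deviate from the paper in how you produce the \emph{first} (local-minimum) solution, and that deviation hides a real gap.

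You propose to build a sub/supersolution pair $[\underline w,\overline w]$ and minimize a truncated functional on the order interval, then claim this yields a local minimum of $J_k$ from which the mountain-pass can depart. The passage from ``minimizer on an order interval'' (or $C^1$-local minimizer) to ``local minimizer in the $W^{1,p}_0$ topology'' is the Br\'ezis--Nirenberg step, and for the mixed operator $-\Delta_p+(-\Delta)_p^s$ it is not available in the literature you cite; without it the mountain-pass theorem cannot be applied in $W^{1,p}_0(\Omega)$. The paper sidesteps this entirely by following Arcoya--Boccardo rather than Haitao/Giacomoni: it works directly with the approximated functional $I_{\lambda,\epsilon}$ on $W^{1,p}_0(\Omega)$, shows $\inf_{\|v\|=R}I_{\lambda,\epsilon}\ge\rho>0$ while $\inf_{\|v\|\le R}I_{\lambda,\epsilon}<0$ for $\lambda<\Lambda$, and obtains $v_\epsilon$ as the minimizer on the closed ball (hence a true $W^{1,p}_0$-local minimizer, by weak lower semicontinuity) and $w_\epsilon$ by Mountain Pass. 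No sub/supersolution, no order interval, no $C^1$-versus-$W^{1,p}$ comparison is needed.

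Two further comments. First, the uniform $L^\infty$ bound via Moser iteration you mention is not used anywhere in the paper's proof of \cref{mainth1}; the $W^{1,p}_0$ bound together with the pointwise lower bound $u_\epsilon\ge\zeta$ (obtained by comparing with the solution of $-\Delta_p\zeta+(-\Delta)_p^s\zeta=B$ for a constant $B$) suffices to pass to the limit. Second, for the distinctness of the limits the paper does not merely invoke ``separated energy levels'': it proves \emph{strong} convergence $u_\epsilon\to u_0$ in $W^{1,p}_0(\Omega)$ by testing the limit equation with $u_0$, the approximated equation with $u_\epsilon$, and matching norms, so that $I_{\lambda,\epsilon}(u_\epsilon)\to I_\lambda(u_0)$ and the inequality $I_\lambda(v_0)<0<\rho\le I_\lambda(w_0)$ follows. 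Your sketch (``Vitali/Fatou combined with the uniform $L^\infty$ bound'') does not by itself give the norm convergence needed for this last step.
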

\begin{theorem}{\label{mainth2}}
   Let $\Omega$ be a $ {C}^1$ bounded open set in $\mathbb{R}^n$. Then $\exists\,\Lambda>0$, such that $\forall\,\lambda\in(0,\Lambda)$, the problem \cref{prob2} has a positive weak solution $u \in W_{\operatorname {loc }}^{1,2}(\Omega) \cap L^{\infty}(\Omega)$ with $u \in W_0^{1,2}(\Omega)$, if $0<\gamma \leq 1$ and $u^{\frac{\gamma+1}{2}} \in W_0^{1,2}(\Omega)$, if $\gamma>1$.
\end{theorem}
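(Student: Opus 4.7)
The plan is to construct $u$ as the monotone limit of solutions $u_k$ of the non-singular, approximated problems
\begin{equation*}
-\Delta v + (-\Delta)^s v = \frac{\lambda}{(v + 1/k)^\gamma} + v^r \text{ in } \Omega, \qquad v = 0 \text{ in } \mathbb{R}^n \setminus \Omega,
\end{equation*}
solved for each $k \in \mathbb{N}$ by a sub- and super-solution argument with bounds uniform in $k$. For the super-solution, let $e$ be the unique positive solution of $-\Delta e + (-\Delta)^s e = 1$ in $\Omega$ with $e = 0$ outside; by \cite{Biagiregularitymaximum}, $e \in W^{1,2}_0(\Omega) \cap L^\infty(\Omega)$. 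Setting $\overline u := M e$ gives $-\Delta\overline u + (-\Delta)^s\overline u = M$, which is a super-solution whenever $M \geq \lambda(Me)^{-\gamma} + (Me)^r$ pointwise. The scalar inequality $M \geq \lambda C_1 M^{-\gamma} + C_2 M^r$, with $C_1, C_2$ fixed by $\inf_\omega e$ and $\|e\|_\infty$, admits a solution $M > 0$ precisely when $\lambda < \Lambda$, where $\Lambda$ is the threshold extracted from the minimum of $t \mapsto \lambda C_1 t^{-\gamma} + C_2 t^r$. For the sub-solution I would take $\underline u := \epsilon\phi_1$, with $\phi_1 > 0$ the first Dirichlet eigenfunction of $-\Delta$ on $\Omega$; the $C^{1,\alpha}(\overline\Omega)$ regularity of $\phi_1$ and its Hopf-type decay near $\partial\Omega$ allow control of $(-\Delta)^s\phi_1$ locally, and once $\epsilon$ is small enough so that $\underline u \leq \overline u$, the singular source $\lambda(\epsilon\phi_1 + 1/k)^{-\gamma}$ dominates $-\Delta(\epsilon\phi_1) + (-\Delta)^s(\epsilon\phi_1)$ uniformly in $k$. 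A standard iteration built around the positive Green operator of $-\Delta + (-\Delta)^s$ then yields $u_k \in W^{1,2}_0(\Omega) \cap L^\infty(\Omega)$ with $\underline u \leq u_k \leq \overline u$.

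\textbf{Limit and boundary behavior.} Monotonicity of the right-hand side in $k$ gives $u_k \nearrow u$ pointwise, with $\underline u \leq u \leq \overline u$, so that $u \in L^\infty(\Omega)$ and $u \geq c_\omega > 0$ on each $\omega \subset\subset \Omega$. Dominated convergence then passes to the limit in the approximated weak formulation against test functions $\varphi \in W^{1,2}_0(\omega)$, yielding the weak formulation of \cref{weaksol2}. For the boundary behavior, if $0 < \gamma \leq 1$ I would test with $u_k$ itself and use $(u_k + 1/k)^{-\gamma}u_k \leq \|\overline u\|_\infty^{1-\gamma}$ to obtain a uniform $W^{1,2}_0$-bound, giving $u \in W^{1,2}_0(\Omega)$ by weak compactness. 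If $\gamma > 1$, I would test with $u_k^\gamma$; \cref{algebraic}(i) applied to the local part and the non-negativity of the nonlocal contribution (because $t \mapsto t^\gamma$ is increasing on $[0,\infty)$) give
\begin{equation*}
\frac{4\gamma}{(\gamma+1)^2}\int_\Omega \bigl|\nabla u_k^{(\gamma+1)/2}\bigr|^2\, dx \leq \lambda |\Omega| + \int_\Omega u_k^{r+\gamma}\, dx \leq C,
\end{equation*}
so that $u^{(\gamma+1)/2} \in W^{1,2}_0(\Omega)$ follows from monotone convergence.

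\textbf{Main obstacle.} The chief difficulty is constructing a super-solution with bounds uniform in $k$ and valid on a genuine range $\lambda \in (0,\Lambda)$: the superlinear term $u^r$ with $r > 1$ sharply constrains $\lambda$, and the threshold $\Lambda$ must be extracted from the minimum of $t \mapsto \lambda C_1 t^{-\gamma} + C_2 t^r$ in a manner compatible with the $L^\infty$ bound and interior lower bound of $e$. A secondary technical point is verifying that $\underline u = \epsilon\phi_1$ is a sub-solution of the full mixed operator uniformly in $k$: the nonlocal contribution $(-\Delta)^s\phi_1$ can blow up mildly near $\partial\Omega$, but is absorbed by the singular source $\lambda(\epsilon\phi_1)^{-\gamma}$ thanks to the Hopf-type boundary behavior of $\phi_1$, which is precisely the place where the local eigenfunction (rather than an eigenfunction of the mixed operator) is useful.
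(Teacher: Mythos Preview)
Your super-solution construction has a genuine gap. You propose $\overline u = Me$ with $e$ the torsion function of $-\Delta+(-\Delta)^s$, and you reduce the pointwise condition $M \geq \lambda(Me)^{-\gamma}+(Me)^r$ to a scalar inequality with constants $C_1,C_2$ ``fixed by $\inf_\omega e$ and $\|e\|_\infty$''. But $e$ vanishes on $\partial\Omega$, so $\inf_\Omega e=0$ and no compact $\omega$ suffices: near the boundary $(Me)^{-\gamma}\to\infty$, and for the approximated problems one needs $M\geq \lambda(Me+1/k)^{-\gamma}\approx \lambda k^\gamma$ there, which blows up in $k$. A multiple of the torsion function therefore cannot be a super-solution with a $k$-independent $L^\infty$ bound; the very boundary behaviour of $e$ that makes it useful for ordering also makes it incompatible with the singular right-hand side.

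The paper's device is to take as super-solution the solution $z_{k,t}$ of the \emph{unperturbed} approximated singular problem
\[
-\Delta z+(-\Delta)^s z=\frac{t}{(z+1/k)^\gamma}\ \text{in }\Omega,\qquad z=0\ \text{in }\mathbb{R}^n\setminus\Omega,
\]
with a parameter $t>\lambda$. A Stampacchia-type bound (\cref{boundednessofsubsolution}) gives $\|z_{k,t}\|_\infty\leq T\,t^{1/(\gamma+1)}$ uniformly in $k$, and the super-solution inequality for \cref{approximated2} becomes $t-\lambda\geq z_{k,t}^r(z_{k,t}+1/k)^\gamma$, which via the $L^\infty$ bound reduces to a genuinely scalar relation $t-\lambda\gtrsim t^{(r+\gamma)/(\gamma+1)}$; since $r>1$ this is solvable for $t=T(\lambda)$ exactly when $\lambda$ lies below some $\Lambda>0$. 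The point is that $z_{k,t}$ already carries the correct boundary decay, so the singular source on the right is matched by the one on the left --- your torsion barrier lacks this feature. For the sub-solution the paper simply uses $w_k$, the solution of the same problem with parameter $\lambda$; this avoids entirely the boundary analysis of $(-\Delta)^s\phi_1$ you flag as a secondary difficulty. One further minor point: you assert $u_k\nearrow u$, but with the additional $v^r$ term this monotonicity in $k$ is not automatic from the sub/super-solution scheme, and the paper does not rely on it --- uniform energy bounds and weak compactness are enough. Your treatment of the boundary datum (testing with $u_k$ for $\gamma\leq 1$ and with $u_k^\gamma$ for $\gamma>1$) is, on the other hand, exactly what the paper does.
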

\begin{theorem}{\label{mainth3}}
    Let $\Omega$ be a bounded strictly convex domain in $\mathbb{R}^n$ with smooth boundary, $n>2$ and let $s\in(0,1/2)$. If $\gamma>0$ and $2<r+1<2^*$ holds, then there is $\Lambda>0$ such that for every $\lambda \in(0, \Lambda)$ the problem \cref{prob2} has two different strictly positive solutions $u$ and $v$ in $W_{\operatorname {loc}}^{1,2}(\Omega) \cap L^{\infty}(\Omega)$ in the sense of \cref{weaksol2} such that
\begin{equation*}
u^\alpha, v^\alpha \in W_0^{1,2}(\Omega), \quad \forall \alpha>\frac{\gamma+1}{4} .
\end{equation*}
\end{theorem}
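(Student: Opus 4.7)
The plan is to follow an approximation plus topological degree strategy in the spirit of Arcoya-M\'erida \cite{Arcoyamultigreaterthan}, adapted to handle the mixed local-nonlocal operator. For each $k\in\mathbb{N}$ I would first study the regularized, nonsingular problem
\begin{equation*}
-\Delta u+(-\Delta)^{s}u=\frac{\lambda}{(u+1/k)^{\gamma}}+u^{r}\ \text{in}\ \Omega,\qquad u=0\ \text{in}\ \mathbb{R}^{n}\setminus\Omega.
\end{equation*}
A first (minimal) positive solution $u_{k}$ is obtained via the sub- and super-solution method: as a subsolution I would take $\underline{u}=c\varphi_{1}$, where $\varphi_{1}$ is the first Dirichlet eigenfunction of $-\Delta$; since $\varphi_{1}\sim\dist(\cdot,\partial\Omega)$ and $s<1/2$, the nonlocal contribution $(-\Delta)^{s}\varphi_{1}$ stays controllable near $\partial\Omega$ and the singular term dominates for $c$ small, yielding a strict subsolution. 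A global supersolution comes from a constant truncation or from a torsion-type auxiliary problem, using the comparison results of \cite{Biagiregularitymaximum}. For the second solution $v_{k}$ I would then invoke Leray-Schauder degree applied to the compact solution operator $T_{\lambda}\colon C(\overline{\Omega})\to C(\overline{\Omega})$ inverting $-\Delta+(-\Delta)^{s}$; comparing the degree on a small neighbourhood of $u_{k}$ (where the linearization provides it) with the degree on a large ball (which vanishes by a suitable homotopy exploiting $r+1>2$) produces an additional fixed point.

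The main obstacle, and the technical heart of the argument, is obtaining $L^{\infty}$ bounds for $u_{k},v_{k}$ uniform in $k$ and in $\lambda$ on some fixed interval $(0,\Lambda)$. Since the Kelvin transform used in the purely local case has no direct analogue for $(-\Delta)^{s}$, I would run a Gidas-Spruck \cite{GidasSpruck} blow-up argument using the strict convexity of $\Omega$. Assume, for contradiction, that $M_{k}:=\|u_{k}\|_{\infty}\to\infty$ with maximum attained at $x_{k}$, and set $t_{k}=M_{k}^{-(r-1)/2}$ and $w_{k}(y)=M_{k}^{-1}u_{k}(x_{k}+t_{k}y)$. A direct scaling computation gives
\begin{equation*}
-\Delta w_{k}+t_{k}^{2-2s}(-\Delta)^{s}w_{k}=\frac{\lambda}{M_{k}^{r}(M_{k}w_{k}+1/k)^{\gamma}}+w_{k}^{r}.
\end{equation*}
Because $r>1$ and $s<1$, the coefficient $t_{k}^{2-2s}=M_{k}^{-(r-1)(1-s)}\to0$, so the nonlocal term becomes a vanishing perturbation, and the singular term also vanishes pointwise on $\{w_{k}>\delta\}$. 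Up to translations, strict convexity of $\Omega$ guarantees that the rescaled domains converge either to $\mathbb{R}^{n}$ (interior blow-up) or to a half-space (boundary blow-up) without further degeneracy; in either case the limit $w_{\infty}$ solves $-\Delta w_{\infty}=w_{\infty}^{r}$ with $0\le w_{\infty}\le 1$ and $w_{\infty}(0)=1$, contradicting the Liouville theorems of Gidas-Spruck in the subcritical range $r+1<2^{*}$.

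With uniform $L^{\infty}$ bounds in hand, the interior regularity theory for the mixed operator from \cite{Biagiregularitymaximum,MG} yields $C^{1,\alpha}_{\operatorname{loc}}$ estimates, and Arzel\`a-Ascoli gives subsequential convergence $u_{k}\to u$, $v_{k}\to v$ in $C_{\operatorname{loc}}(\Omega)$. The uniform subsolution bound $u_{k},v_{k}\ge c\varphi_{1}$ yields strict positivity on every $\omega\subset\subset\Omega$, so the singular nonlinearity passes to the limit by dominated convergence in the weak formulation of \cref{weaksol2}, showing that $u$ and $v$ are weak solutions; their distinctness is inherited from that of $u_{k},v_{k}$ combined with the strict positivity lower bound and the homotopy data from the degree computation. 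Finally, to obtain $u^{\alpha},v^{\alpha}\in W_{0}^{1,2}(\Omega)$ for every $\alpha>(\gamma+1)/4$, I would test the approximated equation with $(u_{k}+1/k)^{2\alpha-1}$: the local part produces $\tfrac{2\alpha-1}{\alpha^{2}}\int|\nabla(u_{k}+1/k)^{\alpha}|^{2}$, the nonlocal term has the correct sign by monotonicity of $\tau\mapsto|\tau|^{p-2}\tau$, and the right-hand side is controlled uniformly in $k$ using $2\alpha-1>(\gamma-1)/2$, the $L^{\infty}$ bound, and the subsolution lower bound, giving a uniform $W_{0}^{1,2}$ estimate on $(u_{k}+1/k)^{\alpha}$ that survives the limit via Fatou.
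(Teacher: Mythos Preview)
Your overall architecture---regularize, produce two solutions by sub/super plus degree, get uniform $L^{\infty}$ bounds by blow-up, pass to the limit, then test with powers---matches the paper's. But two of your ingredients are not strong enough to reach the stated conclusion.

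First, the subsolution $c\varphi_{1}$ is the wrong scaling for the final regularity step. When you test with $(u_{k}+1/k)^{2\alpha-1}-(1/k)^{2\alpha-1}$ and set $\theta=2\alpha-1$, the only nontrivial term on the right is $\int_{\Omega}(u_{k}+1/k)^{\theta-\gamma}$ in the range $\theta<\gamma$. With the lower bound $u_{k}+1/k\ge c\varphi_{1}$ you get $(u_{k}+1/k)^{\theta-\gamma}\le (c\varphi_{1})^{\theta-\gamma}$, and since $\varphi_{1}\sim\dist(\cdot,\partial\Omega)$ this is integrable only when $\gamma-\theta<1$, i.e.\ $\alpha>\gamma/2$. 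That is strictly weaker than the claimed $\alpha>(\gamma+1)/4$. The paper instead builds the subsolution $z_{k}=\bigl(C_{0}\phi_{1}+k^{-(\gamma+1)/2}\bigr)^{2/(\gamma+1)}-1/k$, so that $u_{k}+1/k\gtrsim \phi_{1}^{2/(\gamma+1)}$; then $(u_{k}+1/k)^{\theta-\gamma}\lesssim \phi_{1}^{2(\theta-\gamma)/(\gamma+1)}$, which is integrable exactly when $\theta>(\gamma-1)/2$, recovering the full range. Checking that this $z_{k}$ is a subsolution of the unperturbed approximated problem is where $s<1/2$ and Lipschitz regularity of $\phi_{1}$ are genuinely used.

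Second, your blow-up argument allows boundary blow-up and proposes to handle it via a half-space Liouville theorem. The paper does not do this: it first runs a moving-plane argument (this is where strict convexity is used) to show that any solution attains its maximum in a fixed compact $\omega_{0}\subset\subset\Omega$, so the blow-up centers $P_{k}$ stay uniformly away from $\partial\Omega$. This is not merely a stylistic choice. In the interior case the singular term after rescaling is bounded by $\mu_{k}^{2r/(r-1)}\lambda\, c(\omega_{0})^{-\gamma}\to 0$, using $u_{k}\ge w_{k,\lambda_{0}}\ge c(\omega_{0})$ on $\bar\omega_{0,d}$. In your boundary scenario the rescaled point can approach $\partial\Omega$, where $u_{k}\to 0$ and $f_{k}(u_{k})=(u_{k}+1/k)^{-\gamma}$ can be as large as $k^{\gamma}$; since the approximation index and the blow-up index coincide, you would need $M_{k}^{-r}k^{\gamma}\to 0$, which you have no control over. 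The moving-plane reduction is therefore essential, and it also supplies the mechanism for distinctness in the limit: the paper produces $u_{k},v_{k}$ with $\|u_{k}\|_{\infty}\le\delta$ and $\|v_{k}\|_{\infty}\ge\delta+\varepsilon$, and since the maxima of $v_{k}$ live in $\bar\omega_{0}$ this gap survives local uniform convergence. Your ``homotopy data'' remark does not provide such a quantitative separation.
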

\begin{remark}
    Note that in the case $1<\gamma<3$, we have $\frac{\gamma+1}{4}<1$ and thus we obtain that the solutions are in $W_0^{1,2}(\Omega)$ too, improving the result of \cref{mainth2}. Furthermore, since $\frac{\gamma+1}{4}<\frac{\gamma+1}{2}$, the theorem also improves the regularity in \cref{mainth2} for the case $\gamma>1$.
\end{remark}
\section*{Preliminaries for the proof of \cref{mainth1}} Following the ideas and approximation techniques of \cite{Arcoya}, we will prove our first result \cref{mainth1} in this section. We consider the space $W^{1,p}_0(\Omega)$ with the norm \begin{equation*}\|u\|_{W^{1,p}_0(\Omega)}=\left(\int_\Omega |\nabla u|^p d x +\int_{\mathbb{R}^n} \int_{\mathbb{R}^n} \frac{|u(x)-u(y)|^p}{|x-y|^{n+ s p}} d x d y \right)^{1/p}%{\frac{1}{p}}
.\end{equation*} Let us denote the energy functional $I_\lambda:  W^{1,p}_0(\Omega)\rightarrow \mathbb{R} \cup\{ \pm \infty\}$ corresponding to the problem \cref{prob} by
\begin{equation*}
I_\lambda(u)=\frac{1}{p}\int_\Omega |\nabla u|^p d x +\frac{1}{p} \int_{\mathbb{R}^{n}}\int_{\mathbb{R}^{n}} \frac{|u(x)-u(y)|^p}{|x-y|^{n+s p}} d x d y-\lambda \int_{\Omega} \frac{\left(u^{+}\right)^{1-\gamma}}{1-\gamma} d x-\frac{1}{r+1} \int_{\Omega}\left(u^{+}\right)^{r+1} d x .
\end{equation*}
Note that $I_\lambda$ is not differentiable. Therefore for $\epsilon>0$, we consider the following approximated problem
\begin{equation}{\label{approximated}}
\begin{split}
-\Delta_pu+(-\Delta)_p^s u&=\frac{\lambda}{(u^++\epsilon)^{\gamma}}+(u^+)^r  \text { in } \Omega, %\\u&>0 \text{ in } \Omega,
\\u&=0  \text { in }\mathbb{R}^n \backslash \Omega,
\end{split}
\end{equation}
for which the corresponding energy functional is given by
\begin{equation*}
\begin{array}{rcl}
I_{\lambda,\epsilon}(u)&=&\frac{1}{p}\int_\Omega |\nabla u|^p d x +\frac{1}{p} \int_{\mathbb{R}^{n}}\int_{\mathbb{R}^{n}} \frac{|u(x)-u(y)|^p}{|x-y|^{n+s p}} d x d y-\frac{\lambda}{1-\gamma} \int_{\Omega} [\left(u^{+}+\epsilon\right)^{1-\gamma}-\epsilon^{1-\gamma}] d x\smallskip\\&&-\frac{1}{r+1} \int_{\Omega}\left(u^{+}\right)^{r+1} d x .
\end{array}
\end{equation*}
One can easily verify that $I_{\lambda, \epsilon} \in C^1(W^{1,p}_0(\Omega), \mathbb{R}), I_{\lambda, \epsilon}(0)=0$ and $I_{\lambda, \epsilon}(v) \leq I_{0, \epsilon}(v)$ for all $0 \leq v \in W^{1,p}_0(\Omega)$. We refer [\citealp{eigenvalue}, Proposition 5.1] for the existence of the first nonnegative eigenfunction $e_1 \in W^{1,p}_0(\Omega)\cap L^{\infty}(\Omega)$ corresponding to the first eigenvalue $\lambda_1$ satisfying the equation
\begin{equation*}
\begin{array}{c}
-\Delta_p v+(-\Delta)_p^s v=\lambda_1|v|^{p-2} v \text { in } \Omega,\quad v=0 \text { in } \mathbb{R}^n \backslash \Omega .    \end{array}
\end{equation*}
Without loss of generality, let $\left\|e_1\right\|_{W^{1,p}_0(\Omega)}=1$. Now we show the functional $I_{\lambda, \epsilon}$ satisfies the Palais-Smale $(P S)_c$ condition.
\begin{proposition}{\label{ps}} $I_{\lambda, \epsilon}$ satisfies the $(P S)_c$ condition, for any $c \in \mathbb{R}$, that is if $\left\{u_k\right\} \subset W^{1,p}_0(\Omega)$ is a sequence which satisfies
\begin{equation}{\label{psc}}
I_{\lambda, \epsilon}(u_k) \rightarrow c \text { and } I_{\lambda, \epsilon}^{\prime}(u_k) \rightarrow 0
\end{equation}
as $k \rightarrow \infty$, then $\left\{u_k\right\}$ admits a strongly convergent subsequence in $W^{1,p}_0(\Omega)$.
\end{proposition}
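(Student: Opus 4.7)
The strategy is the standard three-step program for the Palais--Smale condition: (i) derive a uniform bound on $\{u_k\}$ in $W^{1,p}_0(\Omega)$ from \cref{psc}; (ii) pass to a weakly convergent subsequence and use the compact embeddings from \cref{Sobolev embedding} to handle the lower-order (subcritical and singular) terms; (iii) upgrade weak convergence to strong convergence by exploiting the $(S_+)$-type monotonicity from \cref{p case}.

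\textbf{Step 1 (boundedness).} I would test the asymptotic identity
\begin{equation*}
I_{\lambda,\epsilon}(u_k)-\frac{1}{r+1}\langle I'_{\lambda,\epsilon}(u_k),u_k\rangle = \Bigl(\frac{1}{p}-\frac{1}{r+1}\Bigr)\|u_k\|_{W^{1,p}_0(\Omega)}^{p} - \frac{\lambda}{1-\gamma}\!\int_\Omega \!\![(u_k^+ +\epsilon)^{1-\gamma}-\epsilon^{1-\gamma}]\,dx + \frac{\lambda}{r+1}\!\int_\Omega \!\! \frac{u_k^+}{(u_k^+ +\epsilon)^{\gamma}}\,dx.
\end{equation*}
The left-hand side is bounded by $c(1+\|u_k\|_{W^{1,p}_0(\Omega)})$ by \cref{psc}, while the two correction terms on the right are controlled by $C(1+\|u_k^+\|_{L^{1-\gamma}(\Omega)}^{1-\gamma})\le C(1+\|u_k\|_{W^{1,p}_0(\Omega)}^{1-\gamma})$. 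Since $p>r+1>p$… oops, since $p<r+1$ and $1-\gamma<1<p$, the coercive $\|u_k\|^p$ term dominates and yields the uniform bound.

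\textbf{Step 2 (weak limit and passage for zeroth-order terms).} By reflexivity, pass to a subsequence with $u_k\rightharpoonup u$ in $W^{1,p}_0(\Omega)$; by \cref{Sobolev embedding}, $u_k\to u$ strongly in $L^{q}(\Omega)$ for every $q\in[1,p^*)$ and a.e., and in particular in $L^{r+1}(\Omega)$ because $r+1<p^*$. Then $(u_k^+)^r\to(u^+)^r$ in $L^{(r+1)/r}(\Omega)$, so $\int_\Omega (u_k^+)^r(u_k-u)\,dx\to 0$. For the singular part, the integrand $\frac{1}{(u_k^+ +\epsilon)^{\gamma}}(u_k-u)$ is dominated by $\epsilon^{-\gamma}|u_k-u|$, so dominated convergence gives $\int_\Omega \frac{u_k-u}{(u_k^+ +\epsilon)^{\gamma}}\,dx\to 0$. (This is where the $\epsilon$-truncation matters.)

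\textbf{Step 3 (strong convergence).} Write
\begin{equation*}
\langle I'_{\lambda,\epsilon}(u_k)-I'_{\lambda,\epsilon}(u),\,u_k-u\rangle = A_k+B_k - \lambda R_k - S_k,
\end{equation*}
where $A_k$ is the local $p$-Laplacian difference, $B_k$ is the fractional $p$-Laplacian difference, and $R_k,S_k$ are the two terms handled in Step 2. The left-hand side tends to $0$ since $I'_{\lambda,\epsilon}(u_k)\to 0$ in $W^{-1,p'}(\Omega)$ and $u_k-u\rightharpoonup 0$ in $W^{1,p}_0(\Omega)$ (so $\langle I'_{\lambda,\epsilon}(u),u_k-u\rangle\to 0$ and $\langle I'_{\lambda,\epsilon}(u_k),u_k-u\rangle\to 0$). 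By Step 2, $R_k,S_k\to 0$, hence $A_k+B_k\to 0$. Since both $A_k$ and $B_k$ are nonnegative (by monotonicity of $t\mapsto|t|^{p-2}t$ applied pointwise / pair-wise), each tends to $0$ separately. The inequality of \cref{p case} applied to $a=\nabla u_k,\,b=\nabla u$ together with H\"older (splitting into the cases $p\ge 2$ and $1<p<2$ as usual) then forces $\nabla u_k\to\nabla u$ in $L^p(\Omega)$; applying the same inequality to the pair $(u_k(x)-u_k(y),\,u(x)-u(y))$ against the Gagliardo measure $|x-y|^{-n-sp}dx\,dy$ yields strong convergence of the fractional Gagliardo seminorm. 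Combining the two gives $u_k\to u$ in $W^{1,p}_0(\Omega)$.

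\textbf{Main obstacle.} The delicate point is treating the fractional term $B_k$ in the same manner as $A_k$ when $1<p<2$: the Simon-type inequality in \cref{p case} is $L^2$-weighted, and to recover $L^p$-convergence of the double-difference one must interpolate with a bound on $\|u_k\|_{W^{1,p}_0(\Omega)}$ via H\"older with exponents $2/p$ and $2/(2-p)$, which is standard but must be executed with care since the Gagliardo measure is not finite. The singular term is harmless here precisely because of the $\epsilon$-regularization---this is exactly the gain of working with $I_{\lambda,\epsilon}$ instead of $I_\lambda$.
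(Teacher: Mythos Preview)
Your proposal is correct and follows the paper's strategy in Steps~1 and~2 almost verbatim (same linear combination $I_{\lambda,\epsilon}(u_k)-\frac{1}{r+1}\langle I'_{\lambda,\epsilon}(u_k),u_k\rangle$, same use of the $\epsilon$-regularization to dominate the singular term, same compact embedding for the subcritical power). The only divergence is in Step~3.

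You run the standard $(S_+)$ argument: from $A_k+B_k\to 0$ with both summands nonnegative you invoke \cref{p case} pointwise (resp.\ pairwise) and then, in the range $1<p<2$, interpolate via H\"older with exponents $2/p$ and $2/(2-p)$ against the bounded quantity $\int(|\nabla u_k|+|\nabla u|)^p$ (resp.\ the bounded Gagliardo seminorms, which is what resolves your stated ``main obstacle''). The paper instead avoids the case split entirely: it applies H\"older to each of $A_k,B_k$ to obtain the scalar inequality
\[
A_k\ \ge\ \big(\|\nabla u_k\|_{L^p}^{\,p-1}-\|\nabla u_0\|_{L^p}^{\,p-1}\big)\big(\|\nabla u_k\|_{L^p}-\|\nabla u_0\|_{L^p}\big)\ \ge\ C_p\big(\|\nabla u_k\|_{L^p}^{p/2}-\|\nabla u_0\|_{L^p}^{p/2}\big)^2,
\]
the last step being item~(i) of \cref{algebraic} with $\alpha=p-1$, and similarly for $B_k$ with the $W^{s,p}_0$-seminorm. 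This yields convergence of both pieces of the equivalent norm, after which uniform convexity of $W^{1,p}_0(\Omega)$ upgrades weak convergence to strong. Your route is the more commonly seen one and gives pointwise information ($\nabla u_k\to\nabla u$ in $L^p$) directly; the paper's route is slicker in that it treats all $p>1$ uniformly and never needs the $p<2$ interpolation, at the price of passing through norm convergence and uniform convexity.
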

\begin{proof} We claim that if $\left\{u_k\right\} \subset W^{1,p}_0(\Omega)$ satisfies \cref{psc} then $\left\{u_k\right\}$ is bounded in $W^{1,p}_0(\Omega)$. To show this, we proceed as follows. As $0<\gamma<1$, we have 
\begin{equation}{\label{fracineq}}
    (u^++\epsilon)^{1-\gamma}-\epsilon^{1-\gamma}\leq (u^+)^{1-\gamma}.
\end{equation}
Using \cref{Sobolev embedding} and \cref{fracineq}, it holds
\begin{equation}{\label{step1}}
\begin{array}{rcl}
 I_{\lambda, \epsilon}(u_k)-\frac{1}{r+1} I_{\lambda, \epsilon}^{\prime}(u_k) u_k&=&
\left(\frac{1}{p}-\frac{1}{r+1}\right)\left(\int_\Omega |\nabla u|^p d x +\int_{\mathbb{R}^n} \int_{\mathbb{R}^n} \frac{|u(x)-u(y)|^p}{|x-y|^{n+sp}} d x d y \right)%\left\|u_k\right\|^p
\smallskip\\&&-\frac{\lambda}{1-\gamma} \int_{\Omega}[\left(u_k^{+}+\epsilon\right)^{1-\gamma}-\epsilon^{1-\gamma} ]d x+\frac{\lambda}{r+1} \int_{\Omega}\left(u_k^{+}+\epsilon\right)^{-\gamma} u_k\, d x \smallskip\\
&\geq & \left(\frac{1}{p}-\frac{1}{r+1}\right)\left\|u_k\right\|^p-\frac{\lambda}{1-\gamma} \int_{\Omega}\left(u_k^{+}\right)^{1-\gamma} d x+\frac{\lambda}{r+1} \int_{\Omega}\left(u_k^{+}+\epsilon\right)^{-\gamma} u_k \,d x \smallskip\\
&\geq & \left(\frac{1}{p}-\frac{1}{r+1}\right)\left\|u_k\right\|^p-\frac{\lambda}{1-\gamma} \int_{\Omega}\left(u_k^{+}\right)^{1-\gamma} d x-\frac{\lambda C}{\epsilon(r+1)}\left\|u_k\right\|,
\end{array}
\end{equation}
where $C$ is a positive constant independent of $k$. Note that, for the last term, we have used the fact that if $a\in \mathbb{R}$ is such that $|a|<b$ then $-b<-|a|<a<|a|<b$ holds. Here $\|\cdot\|$ denotes norm in $W^{1,p}_0(\Omega)$. \\Further using \cref{Sobolev embedding}, we estimate the second term as

\begin{equation}{\label{step2}}
\begin{array}{rcl}
 \int_{\Omega} \left(u_k^{+}\right)^{1-\gamma} d x \leq\int_{\Omega} \left|u_k\right|^{1-\gamma}d x 
&\leq & \left(\int_{\Omega \cap\left\{\left|u_k\right| \geq 1\right\}}\left|u_k\right|^{1-\gamma} d x+\int_{\Omega \cap\left\{\left|u_k\right|<1\right\}}\left|u_k\right|^{1-\gamma} d x\right)\smallskip\\  
& \leq &\left(\int_{\Omega \cap\left\{\left|u_k\right| \geq 1\right\}}\left|u_k\right| d x+\int_{\Omega \cap\left\{\left|u_k\right|<1\right\}}\left|u_k\right|^{1-\gamma} d x\right) \leq C\left(\left\|u_k\right\|+\left\|u_k\right\|^{1-\gamma}\right).  
    \end{array}
\end{equation}
where $C$ is a positive constant independent of $k$. Then inserting \cref{step2} into \cref{step1}, and as $r+1>p$, we get for some positive constant $C_1$ independent of $k$,
\begin{equation}{\label{step3}}
I_{\lambda, \epsilon}(u_k)-\frac{1}{r+1} I_{\lambda, \epsilon}^{\prime}(u_k) u_k \geq C_1\left\|u_k\right\|^p-C\left(\left\|u_k\right\|+\left\|u_k\right\|^{1-\gamma}\right) .
\end{equation}
As $I^\prime_{\lambda,\epsilon}(u_k)\rightarrow 0$, therefore for $\eta>0$, there exists $k$ large enough such that
\begin{equation*}
    \left|I_{\lambda,\epsilon}^\prime(u_k)(v)\right|\leq \eta \|v\|,
\end{equation*}
for all $v\in W^{1,p}_0(\Omega)$. Now choosing $\eta=1$ and $v=u_k$, and using \cref{psc}, it holds for large enough $k$,
\begin{equation}{\label{step4}}
\left|I_{\lambda, \epsilon}(u_k)-\frac{1}{r+1} I_{\lambda, \epsilon}^{\prime}(u_k) u_k\right| \leq \left|I_{\lambda, \epsilon}(u_k)\right|+\left|\frac{1}{r+1} I_{\lambda, \epsilon}^{\prime}(u_k) u_k\right|\leq c+1+C\left\|u_k\right\|. 
\end{equation}
From \cref{step3,step4}, using Young's inequality, our claim follows as $p>1$ and $0<\gamma<1$.\smallskip\\ Since $\left\{u_k\right\}$ is bounded in the reflexive Banach space $W^{1,p}_0(\Omega)$, there exists $u_0 \in W^{1,p}_0(\Omega)$ such that up to a subsequence, $u_k \rightharpoonup u_0$ weakly in $W^{1,p}_0(\Omega)$,  $u_k \rightarrow u_0$ in $L^q(\Omega)$ for all $q\in[1,p^*)$ and $u_k\rightarrow u_0$ a.e. in $\Omega$.
We now show $u_k \rightarrow u_0$ strongly in $ W^{1,p}_0(\Omega)$ as $k \rightarrow \infty$. Denoting
\begin{equation*}
\mathcal{E}(\phi, \psi):=\int_{\mathbb{R}^{n}}\int_{\mathbb{R}^{n}}\frac{|\phi(x)-\phi(y)|^{p-2}(\phi(x)-\phi(y))(\psi(x)-\psi(y))}{|x-y|^{n+s p}} d x d y ,
\end{equation*}
we have by \cref{psc} that
\begin{equation*}
\lim _{k \rightarrow \infty}\left(\int_\Omega|\nabla u_k|^{p-2}\nabla u_k\cdot \nabla u_0 \,d x +\mathcal{E}(u_k, u_0)-\lambda \int_{\Omega}\left(u_k^{+}+\epsilon\right)^{-\gamma} u_0\, d x-\int_{\Omega}\left(u_k^{+}\right)^r u_0\, d x\right)=0    
\end{equation*}
and
\begin{equation*}
\lim _{k \rightarrow \infty}\left(\int_\Omega|\nabla u_k|^{p-2}\nabla u_k\cdot \nabla u_k \,d x +\mathcal{E}(u_k, u_k)-\lambda \int_{\Omega}\left(u_k^{+}+\epsilon\right)^{-\gamma} u_k \,d x-\int_{\Omega}\left(u_k^{+}\right)^r u_k\, d x\right)=0 .   
\end{equation*}
Now setting $U_k(x, y)=u_k(x)-u_k(y)$, $U_0(x, y)=u_0(x)-u_0(y)$ and subtracting the above two, one obtains
\begin{equation}{\label{step5}}
    \begin{array}{c}
         \lim _{k \rightarrow \infty} \int_\Omega (|\nabla u_k|^{p-2}\nabla u_k-|\nabla u_0|^{p-2}\nabla u_0)\cdot(\nabla u_k-\nabla u_0) d x \\+ \lim _{k \rightarrow \infty}  \int_{\mathbb{R}^{n}}\int_{\mathbb{R}^{n}} \frac{\left(\left|U_k(x, y)\right|^{p-2} U_k(x, y)-\left|U_0(x, y)\right|^{p-2} U_0(x, y)\right)\left(U_k(x, y)-U_0(x, y)\right)}{|x-y|^{n+s p}} d x d y \smallskip\\
=  \lim _{k \rightarrow \infty}\left(\lambda \int_{\Omega}\left(u_k^{+}+\epsilon\right)^{-\gamma} u_k\, d x+\int_{\Omega}\left(u_k^{+}\right)^r u_k\, d x-\lambda \int_{\Omega}\left(u_k^{+}+\epsilon\right)^{-\gamma} u_0 \,d x-\int_{\Omega}\left(u_k^{+}\right)^r u_0\, d x\right)\smallskip\\ -\lim _{k \rightarrow \infty} \int_\Omega (|\nabla u_0|^{p-2}\nabla u_0\cdot\nabla u_k-|\nabla u_0|^p) d x -\lim _{k \rightarrow \infty}(\mathcal{E}(u_0, u_k)-\mathcal{E}\left(u_0, u_0\right)) .
    \end{array}
\end{equation}
As $\{u_k\}$ is uniformly bounded is $W^{s,p}_0(\Omega)$ too, and $u_k\to u_0$ a.e. in $\Omega$, therefore $u_k \rightharpoonup u_0$ in $W^{s,p}_0(\Omega)$ and we have
\begin{equation*}
\frac{u_k(x)-u_k(y)}{|x-y|^{\frac{n+s p}{p}}} \rightharpoonup \frac{u_0(x)-u_0(y)}{|x-y|^{\frac{n+s p}{p}}}    
\end{equation*}
weakly in $L^p(\mathbb{R}^{2 n})$. Now since
\begin{equation*}
    \frac{\left|u_0(x)-u_0(y)\right|^{p-2}\left(u_0(x)-u_0(y)\right)}{|x-y|^{\frac{n+s p}{p^{\prime}}}} \in L^{p^{\prime}}(\mathbb{R}^{2 n}),
\end{equation*}
one gets
\begin{equation}{\label{term1}}
\lim _{k \rightarrow \infty}\left(\mathcal{E}\left(u_0, u_k\right)-\mathcal{E}\left(u_0, u_0\right)\right)=0 .
\end{equation} 
Similarly, $u_k\rightharpoonup u_0$ in $W^{1,p}_0(\Omega)$ implies $\nabla u_k\rightharpoonup\nabla u_0$ in $L^p(\Omega)$ and as $|\nabla u_0|^{p-2}\nabla u_0\in L^{p^\prime}(\Omega)$, therefore one can obtain
\begin{equation}{\label{term2}}
    \lim _{k \rightarrow \infty} \int_\Omega (|\nabla u_0|^{p-2}\nabla u_0\cdot\nabla u_k-|\nabla u_0|^p) d x =0.
\end{equation}
Also, observe that $|(u_k^{+}+\epsilon)^{-\gamma} u_0| \leq \epsilon^{-\gamma} u_0$ and $u_0\in L^1(\Omega)%\int_{\Omega}|%\epsilon^{-\gamma} 
%u_0| d x<+\infty
$. Therefore by Lebesgue Dominated convergence theorem, it holds
\begin{equation}{\label{term3}}
\lim _{k \rightarrow \infty} \int_{\Omega}\left(u_k^{+}+\epsilon\right)^{-\gamma} u_0 \,d x=\int_{\Omega}\left(u_0^{+}+\epsilon\right)^{-\gamma} u_0\, d x .
\end{equation}
Since $u_k \rightarrow u_0$ pointwise a.e. in $\Omega$ and for any measurable subset $E$ of $\Omega$ we have
\begin{equation*}    
\int_E\left|\left(u_k^{+}+\epsilon\right)^{-\gamma} u_k\right| d x \\
\leq  \int_E \epsilon^{-\gamma} |u_k| d x \leq\epsilon^{-\gamma}\left\|u_k\right\|_{L^{p^*}(\Omega)}|E|^{\frac{p^*-1}{p^*}} \leq C(\epsilon)|E|^{\frac{p^*-1}{p^*}},
\end{equation*}
so by Vitali convergence theorem, it follows that
\begin{equation}{\label{term4}}
\lim _{k \rightarrow \infty} \lambda \int_{\Omega}\left(u_k^{+}+\epsilon\right)^{-\gamma} u_k \,d x=\lambda \int_{\Omega}\left(u_0^{+}+c\right)^{-\gamma} u_0\, d x .
\end{equation}
Similarly, since $rp^{*\prime}<p^*$, we have\begin{equation*}
\int_E\left|\left(u_k^{+}\right)^r u_0\right| d x \leq\left\|u_0\right\|_{L^{p^*}(\Omega)}\left(\int_E\left(u_k^{+}\right)^{r p^{* \prime}} d x\right)^{\frac{1}{p^{* \prime}}} \leq C_3|E|^\alpha
\end{equation*}
and
\begin{equation*}
\int_E\left|\left(u_k^{+}\right)^r u_k\right| d x \leq\left\|u_k\right\|_{L^{p^*}(\Omega)}\left(\int_E\left(u_k^{+}\right)^{r p^{* \prime}} d x\right)^{\frac{1}{p^{* \prime}}} \leq C_4|E|^\beta,\end{equation*}
for some positive constants $C_3, C_4, \alpha$ and $\beta$ independent of $k$. Therefore by Vitali convergence theorem,
\begin{equation}{\label{term5}}
    \lim _{k \rightarrow \infty} \int_{\Omega}\left(u_k^{+}\right)^r u_0\, d x=\int_{\Omega}\left(u_0^{+}\right)^r u_0 \,d x,
\end{equation}
and
\begin{equation}{\label{term6}}
\lim _{k \rightarrow \infty} \int_{\Omega}\left(u_k^{+}\right)^r u_k\, d x=\int_{\Omega}\left(u_0^{+}\right)^r u_0\, d x .
\end{equation}
Employing \cref{term1,term2,term3,term4,term5,term6} in \cref{step5}, it now follows
\begin{equation}{\label{convergence}}
    \begin{array}{c}
               \lim _{k \rightarrow \infty} \int_\Omega (|\nabla u_k|^{p-2}\nabla u_k-|\nabla u_0|^{p-2}\nabla u_0)\cdot(\nabla u_k-\nabla u_0) d x \smallskip\\+ \lim _{k \rightarrow \infty} \int_{\mathbb{R}^{n}}\int_{\mathbb{R}^{n}} \frac{\left(\left|U_k(x, y)\right|^{p-2} U_k(x, y)-\left|U_0(x, y)\right|^{p-2} U_0(x, y)\right)\left(U_k(x, y)-U_0(x, y)\right)}{|x-y|^{n+s p}} d x d y =0.
    \end{array}
\end{equation}
\cref{p case} implies that both the terms on the above are nonnegative and hence individually go to $0$. Moreover, by Hölder's inequality and item $(i)$ of \cref{algebraic}, one gets that
\begin{equation}{\label{step6}}
\begin{array}{c}
   %\lim _{k \rightarrow \infty}
  \int_{\mathbb{R}^{n}}\int_{\mathbb{R}^{n}} \frac{\left(\left|U_k(x, y)\right|^{p-2} U_k(x, y)-\left|U_0(x, y)\right|^{p-2} U_0(x, y)\right)\left(U_k(x, y)-U_0(x, y)\right)}{|x-y|^{n+s p}} d x d y \smallskip\\
\geq\left(\left\|u_k\right\|_{W^{s,p}_0(\Omega)}^{p-1}-\left\|u_0\right\|_{W^{s,p}_0(\Omega)}^{p-1}\right)\left(\left\|u_k\right\|_{W^{s,p}_0(\Omega)}-\left\|u_0\right\|_{W^{s,p}_0(\Omega)}\right)\smallskip\\\geq C(p)\left(\|u_k\|^{p/2}_{W^{s,p}_0(\Omega)}-\|u_0\|^{p/2}_{W^{s,p}_0(\Omega)}\right)^2\geq 0,  
\end{array}
\end{equation}
and similarly
\begin{equation*}{\label{step7}}
    \int_\Omega (|\nabla u_k|^{p-2}\nabla u_k-|\nabla u_0|^{p-2}\nabla u_0)\cdot(\nabla u_k-\nabla u_0) d x \geq C(p)\bigg(\left(\int_\Omega |\nabla u_k|^p d x \right)^{1/2}-\left(\int_\Omega |\nabla u_0|^p d x \right)^{1/2}\bigg)^2\geq0.
\end{equation*}
Clubbing this with \cref{step6} and using \cref{convergence}, we obtain $\|u_k\|_{W^{1,p}_0(\Omega)}\rightarrow\|u_0\|_{W^{1,p}_0(\Omega)}$. This along with the weak convergence $u_k\rightharpoonup u_0$ in the uniformly convex Banach space $W^{1,p}_0(\Omega)$ gives us $u_k\rightarrow u_0$ strongly as $k\to \infty$.%, in $W^{1,p}_0(\Omega).$
\end{proof}
Next, we show $I_{\lambda,\epsilon}$ satisfies the Mountain Pass Geometry.
\begin{lemma}{\label{mountainpass}} There exist $R>0, \rho>0$ and $\Lambda>0$ depending on $R$ such that $\inf _{\|v\| \leq R} I_{\lambda, \epsilon}(v)<0$ and $\inf _{\|v\|=R} I_{\lambda, \epsilon}(v) \geq \rho$, for  every $\epsilon>0$ and $\lambda \in(0, \Lambda)$. Moreover there exists $T>R$ such that $I_{\lambda, \epsilon}(T e_1)<-1$, for each $\epsilon>0$ and $\lambda \leq \Lambda%\in(0, \Lambda)
$.
\end{lemma}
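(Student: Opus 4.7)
The plan is to verify the three assertions of the lemma separately, starting with the ``ring'' condition, which is the main quantitative step. First I would use the elementary inequality $(u^{+}+\epsilon)^{1-\gamma}-\epsilon^{1-\gamma}\leq (u^{+})^{1-\gamma}$ (namely \cref{fracineq}) to eliminate the $\epsilon$-dependence of the singular term, and then apply H\"older's inequality together with the Sobolev embedding $W^{1,p}_0(\Omega)\hookrightarrow L^{p^*}(\Omega)\hookrightarrow L^q(\Omega)$ of \cref{Sobolev embedding} (valid for every $1\leq q\leq p^*$, in particular for $q=r+1$ and for $q=p^*$, which controls $L^{1-\gamma}$ via H\"older since $\Omega$ is bounded). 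This yields constants $C_1,C_2>0$ depending only on $\Omega,n,p,\gamma,r$ such that
\begin{equation*}
\int_\Omega (v^{+})^{1-\gamma}\,dx\leq C_1\|v\|^{1-\gamma},\qquad \int_\Omega (v^{+})^{r+1}\,dx\leq C_2\|v\|^{r+1}.
\end{equation*}
Substituting into the definition of $I_{\lambda,\epsilon}$ one obtains, for $\|v\|=R$,
\begin{equation*}
I_{\lambda,\epsilon}(v)\geq \frac{R^p}{p}-\frac{\lambda C_1}{1-\gamma}R^{1-\gamma}-\frac{C_2}{r+1}R^{r+1}.
\end{equation*}
Since $r+1>p>1-\gamma$, I would first pick $R>0$ small enough that $\frac{C_2}{r+1}R^{r+1}\leq \frac{R^p}{2p}$, so the right-hand side is bounded below by $\frac{R^p}{2p}-\frac{\lambda C_1}{1-\gamma}R^{1-\gamma}$. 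I would then define $\Lambda=\Lambda(R)$ by requiring $\frac{\lambda C_1}{1-\gamma}R^{1-\gamma}\leq \frac{R^p}{4p}$ for every $\lambda\in(0,\Lambda)$, obtaining the uniform lower bound $\rho:=\frac{R^p}{4p}>0$ independent of $\epsilon>0$ and $\lambda\in(0,\Lambda)$.

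Second, for $\inf_{\|v\|\leq R}I_{\lambda,\epsilon}(v)<0$, I would test on the ray $t\mapsto te_1$ (with $\|e_1\|=1$) and exploit the singular term. One has $I_{\lambda,\epsilon}(0)=0$ and, since $p>1$ and $r>0$,
\begin{equation*}
\frac{d}{dt}I_{\lambda,\epsilon}(te_1)\bigg|_{t=0^+}=0-\lambda\epsilon^{-\gamma}\int_\Omega e_1\,dx-0<0,
\end{equation*}
so there exists $t_\ast=t_\ast(\epsilon,\lambda)\in(0,R]$ with $I_{\lambda,\epsilon}(t_\ast e_1)<0$. The statement only requires existence of some point in the closed ball achieving negative energy, so allowing $t_\ast$ to depend on $\epsilon$ is fine.

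Third, for the ``far'' point, I would discard the singular term altogether using $(Te_1+\epsilon)^{1-\gamma}-\epsilon^{1-\gamma}\geq 0$ (monotonicity of $x\mapsto x^{1-\gamma}$) to get the $\lambda,\epsilon$-independent upper bound
\begin{equation*}
I_{\lambda,\epsilon}(Te_1)\leq \frac{T^p}{p}-\frac{T^{r+1}}{r+1}\int_\Omega e_1^{r+1}\,dx,
\end{equation*}
and since $r+1>p$ and $\int_\Omega e_1^{r+1}>0$, the right-hand side tends to $-\infty$ as $T\to\infty$; I pick any $T>R$ large enough that it falls below $-1$.

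The main obstacle is the first step: the exponents $1-\gamma$, $p$, $r+1$ must be balanced so that the ring of positive energy persists uniformly in $\epsilon$. The crucial ingredient is the $\epsilon$-free bound \cref{fracineq}; without it, the lower bound on $I_{\lambda,\epsilon}$ would blow up as $\epsilon\to 0$ and one could not choose $R,\rho,\Lambda$ independently of $\epsilon$.
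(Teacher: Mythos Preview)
Your proof is correct and follows essentially the same approach as the paper: both use \cref{fracineq} to remove the $\epsilon$-dependence, H\"older plus \cref{Sobolev embedding} to control $\int_\Omega(v^+)^{r+1}$ and $\int_\Omega(v^+)^{1-\gamma}$ by powers of $\|v\|$, the derivative of $t\mapsto I_{\lambda,\epsilon}(te_1)$ at $t=0^+$ for the negative infimum, and the comparison $I_{\lambda,\epsilon}\leq I_{0,\epsilon}$ for the far point. The only cosmetic difference is that the paper first bounds $I_{0,\epsilon}(v)\geq 2\rho$ on the sphere and then defines $\Lambda$ so that the singular correction costs at most $\rho$, whereas you balance all three terms at once with explicit fractions of $R^p/p$; the content is identical.
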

\begin{proof}Fixing $l=|\Omega|^{\frac{1}{\left(\frac{p^*}{r+1}\right)^{\prime}}}$ and using Hölder's inequality along with \cref{Sobolev embedding} we get that
\begin{equation}{\label{res1}}
\int_{\Omega}\left(v^{+}\right)^{r+1} d x\leq \int_\Omega |v|^{r+1} d x \leq\left(\int_{\Omega}|v|^{p^*}\right)^{\frac{r+1}{p^*}}|\Omega|^{1 /\left(\frac{p^*}{r+1}\right)^{\prime}} \leq C l\|v\|^{r+1},
\end{equation}
for some positive constant $C$ which does not depend on $v$. On the other hand, note that
\begin{equation*}
\lim _{t \rightarrow 0} \frac{I_{\lambda, \epsilon}(t e_1)}{t}=-\lambda \int_{\Omega} \epsilon^{-\gamma} e_1 d x<0,
\end{equation*}
and hence we can choose $k \in(0,1)$ sufficiently small and set $R:=k\left(\frac{r+1}{p C l}\right)^{\frac{1}{r+1-p}}$ such that \begin{equation*}
    \inf _{\|v\| \leq R} I_{\lambda, \epsilon}(v)<0.
\end{equation*} Moreover, since $R<\left(\frac{r+1}{p C l}\right)^{\frac{1}{r+1-p}}$, using \cref{res1} we obtain for $\|v\|=R$,
\begin{equation}{\label{res2}}
I_{0, \epsilon}(v) \geq \frac{R^p}{p}-\frac{C l R^{r+1}}{r+1}:=2 \rho \text { (say) }>0 .\end{equation}
Define now
\begin{equation*}
\Lambda:=\frac{\rho}{\underset{\|v\|=R}{\sup}\left(\frac{1}{1-\gamma} \int_{\Omega}|v|^{1-\gamma} d x\right)},
\end{equation*}
which is a positive constant and since $\rho, R$ depends on $k, r, p,|\Omega|, C$ so does $\Lambda$. Therefore \cref{fracineq} gives
\begin{equation*}
    \begin{array}{rcl}
         I_{\lambda, \epsilon}(v) & \geq &\frac{1}{p}\int_\Omega |\nabla v|^p d x +\frac{1}{p} \int_{\mathbb{R}^{n}}\int_{\mathbb{R}^{n}} \frac{|v(x)-v(y)|^p}{|x-y|^{n+s p}} d x d y-\frac{1}{r+1} \int_{\Omega}\left(v^{+}\right)^{r+1} d x-\frac{\lambda}{1-\gamma} \int_{\Omega}\left(v^{+}\right)^{1-\gamma} d x \smallskip\\
& =&I_{0, \epsilon}(v)-\frac{\lambda}{1-\gamma} \int_{\Omega}\left(v^{+}\right)^{1-\gamma} d x .
    \end{array}
\end{equation*}
Hence by \cref{res2},
\begin{equation*}
    \begin{array}{rcl}
         \inf _{\|v\|=R} I_{\lambda, \epsilon}(v) & \geq &\inf _{\|v\|=R} I_{0, \epsilon}(v)-\lambda \sup _{\|v\|=R}\left(\frac{1}{1-\gamma} \int_{\Omega}|v|^{1-\gamma} d x\right) \geq 2 \rho-\lambda \sup _{\|v\|=R}\left(\frac{1}{1-\gamma} \int_{\Omega}|v|^{1-\gamma} d x\right) \geq \rho,
    \end{array}
\end{equation*}
if $\lambda \in(0, \Lambda)$. Finally, it is easy to observe that $I_{0, \epsilon}\left(t e_1\right) \rightarrow-\infty$ as $t \rightarrow+\infty$, and hence we can choose $T>R$ such that $I_{0, \epsilon}\left(T e_1\right)<-1$. Hence
\begin{equation*}
I_{\lambda, \epsilon}(T e_1) \leq I_{0, \epsilon}(T e_1)<-1,
\end{equation*}
which completes the proof.
\end{proof}
\begin{remark}
As a consequence of \cref{mountainpass}, we have
\begin{equation*}
    \inf _{\|v\|=R} I_{\lambda, \epsilon}(v) \geq \rho \max \left\{I_{\lambda, \epsilon}(T e_1), I_{\lambda, \epsilon}(0)\right\}=0 .
\end{equation*}
\end{remark}
\begin{remark}{\label{appmountainpass}}
    Using \cref{ps} and \cref{mountainpass}, by Mountain Pass Theorem, we get for every $\lambda \in(0, \Lambda)$, there exists a $w_\epsilon \in W^{1,p}_0(\Omega)$ such that $I_{\lambda, \epsilon}^{\prime}(w_\epsilon)=0$ and
\begin{equation*}
I_{\lambda, \epsilon}(w_\epsilon)=\inf _{\gamma \in \Gamma} \max _{t \in[0,1]} I_{\lambda, \epsilon}(\gamma(t)) \geq \rho>0,
\end{equation*}
where $\Gamma=\left\{\gamma \in C([0,1], W^{1,p}_0(\Omega)): \gamma(0)=0, \gamma(1)=T e_1\right\}$. Using \cref{fracineq}, \cref{step2} together with Vitali convergence theorem, if $u_k \rightharpoonup u_0$ weakly in $W^{1,p}_0(\Omega)$, then we have
\begin{equation*}
    \lim _{k \rightarrow \infty} \int_{\Omega} \frac{\left(u_k+\epsilon\right)^{1-\gamma}-\epsilon^{1-\gamma}}{1-\gamma} d x=\int_{\Omega} \frac{\left(u_0+\epsilon\right)^{1-\gamma}-\epsilon^{1-\gamma}}{1-\gamma} d x .
\end{equation*}
This along with \cref{term6} and the fact that the norm function is weakly lower semicontinuous, we have $I_{\lambda, \epsilon}$ is also so. Moreover, from \cref{mountainpass}, as for every $\epsilon>0$ and $\lambda \in(0, \Lambda)$ we have $\inf _{\|v\| \leq R} I_{\lambda, \epsilon}(v)<0$, so there exists nonzero $v_\epsilon \in W^{1,p}_0(\Omega) $ such that $\left\|v_\epsilon\right\| \leq R$ and
\begin{equation}{\label{different}}
\inf _{\|v\| \leq R} I_{\lambda, \epsilon}(v)=I_{\lambda, \epsilon}(v_\epsilon)<0<\rho \leq I_{\lambda, \epsilon}(w_\epsilon) .    
\end{equation}
Therefore, $w_\epsilon$ and $v_\epsilon$ are two different non trivial critical points of $I_{\lambda, \epsilon}$, provided $\lambda \in$ $(0, \Lambda)$.
\end{remark}
\begin{remark}{\label{nonnegative}}Testing \cref{approximated}, with $\min \left\{w_\epsilon, 0\right\}$ and $\min \left\{v_\epsilon, 0\right\}$,  and noting that $\lambda(u^++\ep)^{-\gamma}+(u^+)^r$ is nonnegative in $\Omega$, one can use the technique of [\citealp{garain}, Lemma 3.1], to get the critical points $w_\epsilon$ and $v_\epsilon$ of $I_{\lambda, \epsilon}$ are nonnegative.
\end{remark}
Our next lemma states that the critical points are uniformly bounded in $W^{1,p}_0(\Omega)$.
\begin{lemma}{\label{uniform}}There exists $M>0$, constant(independent of $\epsilon$) such that $\left\|u_\epsilon\right\|_{W^{1,p}_0(\Omega)} \leq M$ where $u_\epsilon=w_\epsilon$ or $v_\epsilon$.
\end{lemma}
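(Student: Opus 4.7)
\medskip\noindent
The plan is to treat the two families $\{v_\ep\}$ and $\{w_\ep\}$ separately, since the bound on $v_\ep$ is essentially tautological while the bound on $w_\ep$ requires combining the mountain pass upper estimate with a coercivity estimate derived from $I_{\la,\ep}'(w_\ep)=0$.

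\medskip\noindent
\textbf{Bound on $v_\ep$.}  By construction in \cref{appmountainpass}, $v_\ep$ realises the infimum of $I_{\la,\ep}$ over the ball $\{\|v\|\le R\}$, so $\|v_\ep\|\le R$ and $R$ depends only on $r,p,|\Om|$ and the Sobolev constant, not on $\ep$.

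\medskip\noindent
\textbf{Bound on $w_\ep$.}  Here the idea is to sandwich $I_{\la,\ep}(w_\ep)$ between a universal upper bound coming from the mountain pass characterisation and a coercive lower bound in $\|w_\ep\|$. For the upper bound, the straight path $\ga_0(t)=tTe_1$ lies in $\Ga$, so
\begin{equation*}
I_{\la,\ep}(w_\ep)\le \max_{t\in[0,1]} I_{\la,\ep}(tTe_1)\le \max_{t\in[0,1]} I_{0,\ep}(tTe_1)=\max_{t\in[0,1]} I_{0,0}(tTe_1)=:M_0,
\end{equation*}
where the second inequality uses that $tTe_1\ge 0$ together with \cref{fracineq}, and $I_{0,\ep}$ is independent of $\ep$ because the singular term drops out when $\la=0$.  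Thus $M_0$ is a universal constant, independent of both $\ep$ and $\la$.

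\medskip\noindent
\textbf{Coercive lower bound.}  Since $I_{\la,\ep}'(w_\ep)=0$ and $w_\ep\ge 0$ by \cref{nonnegative}, testing the Euler--Lagrange equation with $w_\ep$ yields
\begin{equation*}
\|w_\ep\|^p=\la\int_\Om \frac{w_\ep}{(w_\ep+\ep)^\ga}\,dx+\int_\Om w_\ep^{r+1}\,dx.
\end{equation*}
Computing $I_{\la,\ep}(w_\ep)-\tfrac{1}{r+1}\langle I_{\la,\ep}'(w_\ep),w_\ep\rangle$ and discarding the nonnegative term $\tfrac{\la}{r+1}\int_\Om w_\ep/(w_\ep+\ep)^\ga\,dx$, then invoking \cref{fracineq} to bound $(w_\ep+\ep)^{1-\ga}-\ep^{1-\ga}\le w_\ep^{1-\ga}$, gives
\begin{equation*}
M_0\ge I_{\la,\ep}(w_\ep)\ge \left(\frac{1}{p}-\frac{1}{r+1}\right)\|w_\ep\|^p-\frac{\la}{1-\ga}\int_\Om w_\ep^{1-\ga}\,dx.
\end{equation*}
A Hölder estimate together with \cref{Sobolev embedding} yields $\int_\Om w_\ep^{1-\ga}\,dx\le C\|w_\ep\|^{1-\ga}$ with $C$ depending only on $\Om,n,p,\ga$.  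Because $1-\ga<1<p$, Young's inequality absorbs the $\|w_\ep\|^{1-\ga}$ term into $\tfrac12(\tfrac1p-\tfrac1{r+1})\|w_\ep\|^p$, leaving
\begin{equation*}
\tfrac12\left(\tfrac{1}{p}-\tfrac{1}{r+1}\right)\|w_\ep\|^p\le M_0+C'(\la,\Om,n,p,\ga),
\end{equation*}
so $\|w_\ep\|$ is bounded by a constant $M$ independent of $\ep$.

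\medskip\noindent
Taking the maximum of the two bounds produces the desired $M$.  The only subtle point is that the PS-type computation in \cref{ps} introduced an $\ep^{-\ga}$ factor; the observation that $w_\ep\ge 0$ (hence $w_\ep/(w_\ep+\ep)^\ga\ge 0$ and bounded by $w_\ep^{1-\ga}$) removes this $\ep$-dependence and is the crucial input here.
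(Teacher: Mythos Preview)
Your proof is correct and follows essentially the same approach as the paper: the $v_\ep$ bound is immediate from $\|v_\ep\|\le R$, and for $w_\ep$ you combine the mountain-pass upper bound along the path $t\mapsto tTe_1$ (dominated by $I_{0,\ep}$) with the identity $I_{\la,\ep}(w_\ep)-\tfrac{1}{r+1}I_{\la,\ep}'(w_\ep)w_\ep$, discard the nonnegative singular term, apply \cref{fracineq}, H\"older, Sobolev, and Young. Your explicit remark that $I_{0,\ep}$ is actually $\ep$-independent (since $\la=0$ kills the singular term) is a useful clarification the paper leaves implicit when it writes $A=\max_{t\in[0,1]}I_{0,\ep}(tTe_1)$.
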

\begin{proof}Clearly, the result  is trivial if $u_{\epsilon}=v_\epsilon$, so we deal with the case $u_{\epsilon}=w_\epsilon$. We define \begin{equation*}
    A=\max _{t \in[0,1]} I_{0, \epsilon}(t T e_1)
\end{equation*}and use \cref{different} to get
\begin{equation*}
A \geq \max _{t \in[0,1]} I_{\lambda, \epsilon}(t T e_1) \geq \inf _{\gamma \in \Gamma} \max _{t \in[0,1]} I_{\lambda, \epsilon}(\gamma(t))=I_{\lambda, \epsilon}(w_\epsilon) \geq \rho>0>I_{\lambda, \epsilon}(v_\epsilon) .
\end{equation*}
Therefore
\begin{equation}{\label{bdd1}}
    \frac{1}{p}\int_\Omega |\nabla w_\epsilon|^p d x +\frac{1}{p} \int_{\mathbb{R}^{n}}\int_{\mathbb{R}^{n}}  \frac{|w_\epsilon(x)-w_\epsilon(y)|^p}{|x-y|^{n+s p}} d x d y-\lambda \int_{\Omega} \frac{\left(w_\epsilon+\epsilon\right)^{1-\gamma}-\epsilon^{1-\gamma}}{1-\gamma} d x-\frac{1}{r+1} \int_{\Omega} w_\epsilon^{r+1} d x \leq A .
\end{equation}
We choose $\phi=-\frac{w_{\epsilon}}{r+1}$ as a test function and note that $w_\epsilon$ is a weak solution of \cref{approximated}. Therefore we have the following
\begin{equation}{\label{bdd2}}
    -\frac{1}{r+1}\left(\int_\Omega |\nabla w_\epsilon|^p d x + \int_{\mathbb{R}^{n}}\int_{\mathbb{R}^{n}}  \frac{|w_\epsilon(x)-w_\epsilon(y)|^p}{|x-y|^{n+s p}} d x d y\right)+\frac{\lambda}{r+1} \int_{\Omega} \frac{w_\epsilon}{\left(w_\epsilon+\epsilon\right)^{\gamma}} d x+\frac{1}{r+1} \int_{\Omega} w_\epsilon^{r+1} d x=0 .
\end{equation}
Adding \cref{bdd1,bdd2} we get,% using \cref{fracineq,step2},
\begin{equation*}
\begin{array}{l}
\left(\frac{1}{p}-\frac{1}{r+1}\right)\left(\int_\Omega |\nabla w_\epsilon|^p d x +\frac{1}{p}\int_{\mathbb{R}^{n}}\int_{\mathbb{R}^{n}} \frac{|w_\epsilon(x)-w_\epsilon(y)|^p}{|x-y|^{n+s p}} d x d y\right)\smallskip\\
\leq  \lambda \int_{\Omega} \frac{\left(w_\epsilon+\epsilon\right)^{1-\gamma}-\epsilon^{1-\gamma}}{1-\gamma} d x-\frac{\lambda}{r+1} \int_{\Omega} \frac{w_\epsilon}{\left(w_\epsilon+\epsilon\right)^{\gamma}} d x+A \smallskip\\
\leq  \lambda \int_{\Omega} \frac{\left(w_\epsilon+\epsilon\right)^{1-\gamma}-\epsilon^{1-\gamma}}{1-\gamma} d x+A \leq C\left\|w_\epsilon\right\|^{1-\gamma}+A,
\end{array}
\end{equation*}
for some positive constant $C$ independent of $\epsilon$. Note that we have deduced the last inequality by using \cref{fracineq}, Hölder's inequality and \cref{Sobolev embedding}. Now since $r+1>p$ and $0<\gamma<1$, we conclude by using Young's inequality that the sequence $\left\{w_\epsilon\right\}$ is uniformly bounded in $W^{1,p}_0(\Omega)$ with respect to $\epsilon$.
\end{proof}
\begin{remark}{\label{u_0}}In view of \cref{nonnegative,uniform}, we can say that up to a subsequence $w_\epsilon \rightharpoonup w_0$ and $v_\epsilon \rightharpoonup v_0$ weakly in $W^{1,p}_0(\Omega)$ as $\epsilon \rightarrow 0$, for some nonnegative $w_0, v_0 \in W^{1,p}_0(\Omega)$. For convenience, we denote by $u_0$ either $w_0$ or $v_0$.
\end{remark}
We now give the result regarding the convergence of gradients of $u_\varepsilon$ to the gradient of $u$ a.e. in $\Omega$.
\begin{lemma}{\label{gradconv}}
Suppose $u_0$ be as in \cref{u_0}. Further assume that for each $\omega\subset\subset \Omega$, there exists a constant $c>0$, depending on $\omega$ such that for all $\varepsilon$, $u_\varepsilon\geq c(\omega)$ in $\omega$. Then up to a subsequence, $\nabla u_\varepsilon\rightarrow\nabla u$ pointwise almost everywhere in $\Omega$.
\end{lemma}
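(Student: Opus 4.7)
The strategy is to adapt the Boccardo--Murat argument to the mixed local-nonlocal setting. Fix an exhausting sequence of open sets $\omega_j \subset\subset \Omega$ with $\cup_j \omega_j = \Omega$; a diagonal extraction will then reduce matters to proving a.e.\ convergence of gradients on a single $\omega \subset\subset \Omega$. For such a fixed $\omega$, I would pick $\omega \subset\subset \omega' \subset\subset \Omega$ and a cutoff $\eta \in C_c^\infty(\Omega)$ with $0 \le \eta \le 1$, $\eta \equiv 1$ on $\omega$, and $\supp \eta \subset \omega'$. By hypothesis $u_\varepsilon \ge c(\omega')>0$ on $\omega'$ uniformly in $\varepsilon$, and by \cref{uniform} we have $u_\varepsilon \rightharpoonup u_0$ in $W_0^{1,p}(\Omega)$, $u_\varepsilon \to u_0$ strongly in $L^q(\Omega)$ for every $q\in[1,p^*)$, and pointwise a.e.\ in $\Omega$.

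\textbf{Testing.} Test the weak form of \cref{approximated} with $\varphi = \eta (u_\varepsilon - u_0) \in W_0^{1,p}(\Omega)$. Expanding $\nabla \varphi$, the local term splits into a ``diagonal'' piece $\int_\Omega \eta\,|\nabla u_\varepsilon|^{p-2}\nabla u_\varepsilon \cdot (\nabla u_\varepsilon - \nabla u_0)\,dx$ and a ``commutator'' piece $\int_\Omega (u_\varepsilon - u_0) |\nabla u_\varepsilon|^{p-2}\nabla u_\varepsilon \cdot \nabla \eta \,dx$; the latter vanishes by Hölder's inequality and strong $L^p$ convergence. On the right-hand side, $\int_\Omega \eta (u_\varepsilon - u_0)\bigl[\lambda(u_\varepsilon+\varepsilon)^{-\gamma} + u_\varepsilon^r\bigr]dx$ vanishes because the singular factor is uniformly bounded by $c(\omega')^{-\gamma}$ on $\supp\eta$, while $u_\varepsilon^r$ is uniformly bounded in $L^{(r+1)/r}(\Omega)$ via \cref{Sobolev embedding}, and $u_\varepsilon - u_0 \to 0$ strongly in the relevant $L^q$. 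Subtracting the weakly-convergent term $\int_\Omega \eta |\nabla u_0|^{p-2}\nabla u_0 \cdot (\nabla u_\varepsilon - \nabla u_0)\,dx \to 0$ produces
\begin{equation*}
\int_\Omega \eta \bigl[|\nabla u_\varepsilon|^{p-2}\nabla u_\varepsilon - |\nabla u_0|^{p-2}\nabla u_0\bigr] \cdot (\nabla u_\varepsilon - \nabla u_0)\,dx + \mathcal{E}(u_\varepsilon, \eta(u_\varepsilon - u_0)) \to 0.
\end{equation*}

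\textbf{Nonlocal bookkeeping.} With $U_\varepsilon(x,y) = u_\varepsilon(x)-u_\varepsilon(y)$ and $U_0(x,y) = u_0(x)-u_0(y)$, apply the symmetric identity
\begin{equation*}
\eta(x)\Phi(x) - \eta(y)\Phi(y) = \tfrac{\eta(x)+\eta(y)}{2}\bigl(\Phi(x)-\Phi(y)\bigr) + \tfrac{\eta(x)-\eta(y)}{2}\bigl(\Phi(x)+\Phi(y)\bigr),
\end{equation*}
with $\Phi = u_\varepsilon - u_0$. The cross term carrying $\eta(x)-\eta(y)$ is an error piece that vanishes in the limit using the Lipschitz character of $\eta$, the uniform $W_0^{s,p}$-seminorm bound on $u_\varepsilon$, and the compact embedding into $L^p$. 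In the remaining symmetric piece, add and subtract $|U_0|^{p-2}U_0$; the mixed products vanish by weak convergence of $U_\varepsilon / |x-y|^{(n+sp)/p}$ in $L^p(\mathbb{R}^{2n})$ against the $L^{p'}$ representative of $U_0$. What remains is a sum of two nonnegative monotone quantities whose total tends to zero, so each individually tends to zero. In particular the local integrand $[|\nabla u_\varepsilon|^{p-2}\nabla u_\varepsilon - |\nabla u_0|^{p-2}\nabla u_0]\cdot(\nabla u_\varepsilon - \nabla u_0)$ tends to $0$ in $L^1(\omega)$, hence a.e.\ on $\omega$ up to a subsequence, and \cref{p case} then forces $\nabla u_\varepsilon \to \nabla u_0$ a.e.\ on $\omega$. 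A final diagonal extraction along $\omega_j$ yields the claim on $\Omega$.

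\textbf{Main obstacle.} The delicate point is the nonlocal bookkeeping, specifically controlling the cross contribution generated by $\eta(x)-\eta(y)$: it must be shown to vanish using only weak convergence of $U_\varepsilon$ in the fractional seminorm, combined with the Lipschitz regularity of $\eta$ and compact embeddings. This decoupling step is what makes the mixed-operator case strictly more involved than the classical purely local Boccardo--Murat argument.
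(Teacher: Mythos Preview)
Your argument is correct, but the route differs from the paper's. The paper follows the truncated Boccardo--Murat scheme: it tests \cref{approximated} with $\phi_K\,T_\mu(u_\varepsilon-u_0)$, bounds the source contributions by $C\mu$, and then decomposes the nonlocal piece as $J=J_1+J_2+J_3+J_4$; the heart of the proof is a four-region case analysis (on the sets where $|u_\varepsilon-u_0|$ lies above or below $\mu$) establishing $J_1\ge 0$, after which the remaining steps are deferred to \cite{garain}, Theorem~A.1. Your proposal instead tests with the untruncated $\eta(u_\varepsilon-u_0)$ and handles the nonlocal term via the symmetric product identity, isolating a nonnegative diagonal contribution and an error cross term that vanishes thanks to the Lipschitz bound on $\eta$ and strong $L^p$ convergence.

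What each approach buys: the truncation route is the more robust template---it is what one needs when the right-hand side is merely $L^1$ or when no uniform interior lower bound is available, and the sign analysis of $J_1$ is a reusable structural fact about the nonlocal monotone term acting on truncations. Your approach is shorter and avoids the case inspection entirely, but it leans on features specific to the present setting: the singular source is uniformly bounded on $\operatorname{supp}\eta$ by the hypothesis $u_\varepsilon\ge c(\omega')$, the power term $u_\varepsilon^r$ sits in a reflexive $L^q$ with $q'<p^*$, and the compact embedding supplies strong $L^p$ convergence of $u_\varepsilon-u_0$. Under these hypotheses your route is a legitimate and somewhat more elementary alternative.
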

\begin{proof}
   Let us take a compact $K \subset \Omega$ and consider a function $\phi_K \in C_c^\infty(\Omega)$ such that $\operatorname{supp} \phi_K=\omega$, $0 \leq \phi_K \leq 1$ in $\Omega$ and $\phi_K \equiv 1$ in $K$. Now for $\mu>0$, we define the truncated function $T_\mu: \mathbb{R} \rightarrow \mathbb{R}$ by
\begin{equation*}
T_\mu(t)= \begin{cases}t, & \text { if }|t| \leq \mu, \\ \mu \frac{t}{|t|}, & \text { if }|t|>\mu .\end{cases}
\end{equation*}
Now choose $\psi_\epsilon=\phi_K T_\mu\left(\left(u_\epsilon-u_0\right)\right) \in W_0^{1, p}(\Omega)$ as a test function in \cref{approximated}, to get
\begin{equation*}
    I+J=R+S,
\end{equation*}
where
\begin{equation*}
    \begin{array}{c}
I=\int_{\Omega}\left|\nabla u_\epsilon\right|^{p-2} \nabla u_\epsilon \cdot \nabla \psi_\epsilon\, d x, \quad J=\int_{\mathbb{R}^n} \int_{\mathbb{R}^n} \frac{|u_\epsilon(x)-u_\epsilon(y)|^{p-2}(u_\epsilon(x)-u_\epsilon(y))(\psi_\epsilon(x)-\psi_\epsilon(y)) }{|x-y|^{n+ps}}d x d y  \smallskip\\
R=\int_{\Omega} \frac{\lambda \psi_\epsilon}{\left(u_\epsilon+\epsilon\right)^\gamma}  d x \quad \text { and } \quad  S=\int_\Omega (u_\epsilon)^r\psi_\epsilon \, dx .
    \end{array}
\end{equation*}
As $u_\epsilon$ is uniformly bounded in $W^{1,p}_0(\Omega)$, we estimate $S$ by using H\"older and Sobolev inequality as
\begin{equation*}
    S\leq \mu\int_\Omega(u_\epsilon)^rdx\leq \mu\left(\int_\Omega u_\epsilon \,dx\right)^r\leq C\mu(\|u_\epsilon\|_{L^p(\Omega)})^r \leq C\mu (\|u_\epsilon\|_{W^{1,p}_0(\Omega)})^r\leq C\mu,
\end{equation*}
for $p-1<r<1$ (the case when $1<p<2$). Now for $1\leq r< p^*-1<p^*$, one gets
\begin{equation*}
    S\leq \mu\int_\Omega(u_\epsilon)^rdx\leq C\mu\left(\int_\Omega(u_\epsilon)^{p^*}dx\right)^{r/p^*} \leq C\mu (\|u_\epsilon\|_{W^{1,p}_0(\Omega)})^r\leq C\mu.
\end{equation*}
Denoting by $\mathcal{A}\eta(x,y)=|\eta(x)-\eta(y)|^{p-2}(\eta(x)-\eta(y))$ and $d\nu=\frac{1}{|x-y|^{n+ps}} d x dy$, we can write $J$ as:
\begin{equation*}
    \begin{array}{rcl}
         J &= & \int_{\mathbb{R}^n} \int_{\mathbb{R}^n} \mathcal{A} u_\epsilon(x, y)(\psi_\epsilon(x)-\psi_\epsilon(y)) d \nu\smallskip \\
&= & \int_{\mathbb{R}^n} \int_{\mathbb{R}^n} \phi_K(x)(\mathcal{A} u_\epsilon(x, y)-\mathcal{A} u_0(x, y))(T_\mu((u_\epsilon-u_0)(x))-T_\mu((u_\epsilon-u_0)(y))) d \nu \smallskip\\
&& +\int_{\mathbb{R}^n} \int_{\mathbb{R}^n} T_\mu((u_\epsilon-u_0)(y)) \mathcal{A} u_\epsilon(x, y)(\phi_K(x)-\phi_K(y)) d \nu \smallskip\\
&& +\int_{\mathbb{R}^n} \int_{\mathbb{R}^n} T_\mu((u_\epsilon-u_0)(y)) \mathcal{A} u_0(x, y)(\phi_K(y)-\phi_K(x)) d \nu \smallskip\\
&& +\int_{\mathbb{R}^n} \int_{\mathbb{R}^n} \mathcal{A} u_0(x, y)(\phi_K(x) T_\mu((u_\epsilon-u_0)(x))-\phi_K(y) T_\mu((u_\epsilon-u_0)(y))) d \nu\smallskip \\
&&:= J_1+J_2+J_3+J_4 .
    \end{array}
\end{equation*}
We show
\begin{equation*}
    J_1=\int_{\mathbb{R}^n} \int_{\mathbb{R}^n} \phi_K(x)(\mathcal{A} u_\epsilon(x, y)-\mathcal{A} u_0(x, y))(T_\mu((u_\epsilon-u_0)(x))-T_\mu((u_\epsilon-u_0)(y))) d \nu\geq 0.
\end{equation*}
To this end, it is enough to prove that the integrand is nonnegative. We observe that
$
\mathbb{R}^n\times \mathbb{R}^n=\cup_{i=1}^4 S_i,$
where
\begin{equation*}
    \begin{array}{c}
         S_1=\left\{(x, y) \in \mathbb{R}^n \times \mathbb{R}^n:|(u_\epsilon-u_0)(x)| \leq \mu,|(u_\epsilon-u_0)(y)| \leq \mu\right\}, \smallskip\\
S_2=\left\{(x, y) \in \mathbb{R}^n \times \mathbb{R}^n:|(u_\epsilon-u_0)(x)| \leq \mu<|(u_\epsilon-u_0)(y)| \right\}, \smallskip\\
S_3=\left\{(x, y) \in \mathbb{R}^n \times \mathbb{R}^n:|(u_\epsilon-u_0)(y)| \leq \mu<|(u_\epsilon-u_0)(x)| \right\}    \end{array}
\end{equation*}
and
\begin{equation*}
    S_4=\left\{(x, y) \in \mathbb{R}^n \times \mathbb{R}^n:|(u_\epsilon-u_0)(x)| >\mu,|(u_\epsilon-u_0)(y)| >\mu\right\}.
\end{equation*}\smallskip
\textbf{Case $1$.} If $x, y \in S_1$, then, 
$
T_\mu((u_\epsilon-u_0)(x))=(u_\epsilon-u_0)(x)$ and $T_\mu((u_\epsilon-u_0)(y))=(u_\epsilon-u_0)(y) .
$
Therefore by \cref{p case}, it easily follows that $J_1\geq 0$.\smallskip\\
\textbf{Case $2$.} Let $x, y \in S_2$. Then, $|(u_\epsilon-u_0)(x)| \leq \mu<|(u_\epsilon-u_0)(y)|$ and we consider four cases.\\ Firstly when $u_0(x)\geq u_\ep(x)$ and $u_0(y)\geq u_\ep(y)$, we have $u_0(x)-u_\ep(x) \leq \mu<u_0(y)-u_\ep(y)$. Therefore,
\begin{equation*}
    T_\mu((u_\ep-u_0)(x))-T_\mu((u_\ep-u_0)(y))=(u_\ep-u_0)(x)+\mu \geq 0 .
\end{equation*}
Moreover, in this case, $u_\ep(x)-u_\ep(y)>u_0(x)-u_0(y)$. Hence by monotonicity of $t\to |t|^{p-2}t$ it holds
\begin{equation*}
\mathcal{A} u_\ep(x, y)-\mathcal{A} u_0(x, y) \geq 0     
\end{equation*}
and hence we get $J_1\geq 0$.\\
Second case occurs when $u_0(x)< u_\ep(x)$ and $u_0(y)\geq u_\ep(y)$, and in this case, $u_0(x)-u_\ep(x)<u_\ep(x)-u_0(x) \leq \mu<u_0(y)-u_\ep(y)$ and 
\begin{equation*}
    T_\mu((u_\ep-u_0)(x))-T_\mu((u_\ep-u_0)(y))=(u_\ep-u_0)(x)+\mu > 0 ,
\end{equation*}
and further, $u_\ep(x)-u_\ep(y)>u_0(x)-u_0(y)$ implies $\mathcal{A} u_\ep(x, y)-\mathcal{A} u_0(x, y) \geq 0$ giving $J_1\geq 0$.\\ For the third case $u_0(x)\geq u_\ep(x)$ and $u_0(y)< u_\ep(y)$, one has $u_\ep(x)-u_0(x) <u_0(x)-u_\ep(x)\leq \mu<u_\ep(y)-u_0(y)$.
Then 
\begin{equation*}
    T_\mu((u_\ep-u_0)(x))-T_\mu((u_\ep-u_0)(y))=(u_\ep-u_0)(x)-\mu \leq0 ,
\end{equation*}
and $u_\ep(x)-u_\ep(y)<u_0(x)-u_0(y)$ implies $\mathcal{A} u_\ep(x, y)-\mathcal{A} u_0(x, y) \leq 0$ will give $J_1\geq 0$.\\
Finally when $u_0(x)< u_\ep(x)$ and $u_0(y)< u_\ep(y)$, we have $%u_0(x)-u_\ep(x)<
u_\ep(x)-u_0(x) \leq \mu<u_\ep(y)-u_0(y)$. Therefore \begin{equation*}
    T_\mu((u_\ep-u_0)(x))-T_\mu((u_\ep-u_0)(y))=(u_\ep-u_0)(x)-\mu \leq0 ,
\end{equation*}
and $u_\ep(x)-u_\ep(y)\leq u_0(x)-u_0(y)$ implies $\mathcal{A} u_\ep(x, y)-\mathcal{A} u_0(x, y) \leq 0$. Hence we get $J_1\geq 0$.
\smallskip\\ Due to symmetry, the case for $x,y\in S_3$ will follow similarly like Case $2$. \smallskip\\
\textbf{Case 3.} Let $x, y \in S_4$. Then a case-by-case inspection clearly guarantees that \begin{equation*}
T_\mu((u_\ep-u_0)(x))-T_\mu((u_\ep-u_0)(y))=0,
\end{equation*}
if $u_0(x)<u_\ep(x)$, $u_0(y)< u_\ep(y)$ or if $u_0(x)\geq u_\ep(x)$, $u_0(y)\geq u_\ep(y)$. \\Further $u_0(x)\geq u_\ep(x)$, $u_0(y)< u_\ep(y)$ gives $T_\mu((u_\ep-u_0)(x))-T_\mu((u_\ep-u_0)(y))=-2\mu<0$; and $u_0(x)< u_\ep(x)$, $u_0(y)\geq u_\ep(y)$ gives $T_\mu((u_\ep-u_0)(x))-T_\mu((u_\ep-u_0)(y))=2\mu>0$. In each case, one can easily check the sign of $\mathcal{A} u_\ep(x, y)-\mathcal{A} u_0(x, y) $ and find that $J_1\geq 0$.\smallskip\\
Combining all the three cases, we conclude
$
    J_1 \geq 0.
$
The rest of the proof follows from [\citealp{garain}, Theorem A.1].
\end{proof}
We now establish that $w_0 \neq v_0$ are weak solutions to \cref{prob}. 
\begin{lemma}{\label{weaksolutiontoprob}}$u_0 \in W^{1,p}_0(\Omega)$ is a weak solution to the problem \cref{prob}.
\end{lemma}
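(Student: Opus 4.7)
The plan is to pass to the limit as $\epsilon \to 0$ in the weak formulation of the approximated problem \cref{approximated} satisfied by $u_\epsilon \in \{w_\epsilon, v_\epsilon\}$, exploiting the weak convergence $u_\epsilon \rightharpoonup u_0$ in $W^{1,p}_0(\Omega)$ from \cref{u_0} together with the pointwise convergence of gradients from \cref{gradconv}. Since \cref{gradconv} assumes a uniform-in-$\epsilon$ positive lower bound $u_\epsilon \geq c(\omega) > 0$ on every $\omega \subset\subset \Omega$, the first and hardest step is to establish this bound.

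To produce the lower bound, I would use $\underline{u} := t e_1$ as a subsolution, where $e_1$ is the first nonnegative eigenfunction recalled before \cref{ps}. For $t > 0$ small enough (depending on $\lambda, \lambda_1, \gamma, \|e_1\|_\infty$ but not on $\epsilon$) one has
\begin{equation*}
-\Delta_p(t e_1) + (-\Delta)_p^s(t e_1) = \lambda_1 t^{p-1} e_1^{p-1} \leq \frac{\lambda}{(t e_1 + \epsilon)^\gamma} + (t e_1)^r \quad \text{in } \Omega,
\end{equation*}
since the singular term on the right blows up as $t \to 0^+$ while the left-hand side vanishes like $t^{p-1}$. Applying the weak comparison principle for the mixed operator from \cite{Biagiregularitymaximum, valdinoci} to the nonnegative weak solution $u_\epsilon$ of \cref{approximated} yields $u_\epsilon \geq t e_1$ in $\Omega$. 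Since $e_1 > 0$ in $\Omega$ and is continuous up to the boundary, $e_1 \geq c_\omega > 0$ on any $\omega \subset\subset \Omega$, giving the desired uniform bound and, after passing to the a.e.\ limit, the positivity clause $u_0 \geq t e_1 > 0$ in $\Omega$ demanded by \cref{weak sol}.

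With the lower bound in hand, \cref{gradconv} yields $\nabla u_\epsilon \to \nabla u_0$ a.e.\ in $\Omega$. Combined with the uniform $W^{1,p}_0$-bound of \cref{uniform}, both $|\nabla u_\epsilon|^{p-2}\nabla u_\epsilon$ and the nonlocal difference quotient $|u_\epsilon(x)-u_\epsilon(y)|^{p-2}(u_\epsilon(x)-u_\epsilon(y))|x-y|^{-(n+sp)/p'}$ are bounded in $L^{p'}$ and converge a.e.\ to their $u_0$-analogues, hence weakly in $L^{p'}$. Fixing $\phi \in C_c^\infty(\Omega)$ with $\omega := \operatorname{supp}\phi \subset\subset \Omega$, the two diffusion terms in the weak formulation of \cref{approximated} pass to the limit by duality against $\nabla \phi \in L^p(\Omega)$ and $(\phi(x)-\phi(y))|x-y|^{-(n+sp)/p} \in L^p(\mathbb{R}^{2n})$. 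On $\omega$ the singular term is dominated by $\|\phi\|_\infty c_\omega^{-\gamma}$, so dominated convergence gives $\int \phi(u_\epsilon+\epsilon)^{-\gamma}\, dx \to \int \phi u_0^{-\gamma}\, dx$; the subcritical power term passes by Vitali's theorem exactly as in the proof of \cref{ps}, since $r < p^*-1$. Passing to the limit shows that $u_0$ satisfies the identity in \cref{weak sol}, completing the proof. The main obstacle is the positivity step, which is where the weak comparison and strong minimum principle for the mixed operator from \cite{Biagiregularitymaximum, valdinoci} are essential.
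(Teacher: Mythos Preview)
Your overall strategy is correct and the passage to the limit matches the paper's argument closely. The genuine difference lies in how you obtain the uniform lower bound $u_\epsilon \geq c(\omega) > 0$.

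The paper does \emph{not} use the eigenfunction $e_1$ as a barrier. Instead it observes that for $\epsilon \in (0,1)$ and $t \geq 0$,
\[
\frac{\lambda}{(t+\epsilon)^\gamma} + t^r \;\geq\; \frac{\lambda}{(t+1)^\gamma} + t^r \;\geq\; \min\Bigl\{1,\tfrac{\lambda}{2^\gamma}\Bigr\} =: B,
\]
so $-\Delta_p u_\epsilon + (-\Delta)_p^s u_\epsilon \geq B$ in $\Omega$. It then invokes the unique solution $\zeta \in W^{1,p}_0(\Omega) \cap L^\infty(\Omega)$ of $-\Delta_p \zeta + (-\Delta)_p^s \zeta = B$, for which \cite{garain} already supplies the property $\zeta \geq c(\omega) > 0$ on every $\omega \subset\subset \Omega$, and compares $u_\epsilon$ with $\zeta$ by testing with $(\zeta - u_\epsilon)^+$. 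This is clean because the comparison is between two problems with \emph{constant} right-hand side $B$, so no monotonicity of the nonlinearity is needed, and the interior positivity of $\zeta$ comes prepackaged from the reference.

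Your route via $t e_1$ also works, but two points deserve care. First, your justification ``the singular term on the right blows up as $t \to 0^+$'' is inaccurate: $\lambda/(te_1 + \epsilon)^\gamma$ does not blow up as $t \to 0$, it merely stays bounded below by $\lambda/(t\|e_1\|_\infty + 1)^\gamma$, which is positive and uniform in $\epsilon \in (0,1)$; that (together with the left side vanishing like $t^{p-1}$) is what actually gives the inequality. Second, when you ``apply the weak comparison principle'' you implicitly need that on the set $\{t e_1 > u_\epsilon\}$ the right-hand side difference has the right sign; this works because the singular term is \emph{decreasing} in $u$, so $\lambda/(u_\epsilon+\epsilon)^\gamma \geq \lambda/(te_1+\epsilon)^\gamma$ there, but you should say so. You also need $e_1 > 0$ in $\Omega$ with $e_1 \geq c_\omega$ on compacta, which for general $p$ is not in \cite{Biagiregularitymaximum,valdinoci} (those treat $p=2$) but rather in \cite{eigenvalue} or via the Harnack-type results in \cite{garain}. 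With these clarifications your argument is complete; the paper's barrier $\zeta$ simply sidesteps them.
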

\begin{proof}
We first observe that for any $\epsilon \in(0,1)$ and $t \geq 0$,
\begin{equation*}
\frac{\lambda}{(t+\epsilon)^{\gamma}}+t^r \geq \frac{\lambda}{(t+1)^{\gamma}}+t^r \geq \min \left\{1, \frac{\lambda}{2^\gamma}\right\}:=B \text{ (say)} .    
\end{equation*}
Hence we can write at least for small $\epsilon$,
\begin{equation*}
-\Delta_p u_\epsilon+(-\Delta)_p^s u_\epsilon=\frac{\lambda}{\left(u_\epsilon+\epsilon\right)^{\gamma}}+u_\epsilon^r \geq B.    
\end{equation*}Now by [\citealp{garain}, Lemma 3.1], we get the existence of a unique $\zeta \in W^{1,p}_0(\Omega)\cap L^\infty(\Omega)$ satisfying
\begin{equation*}
\begin{array}{c}
-\Delta_p\zeta+(-\Delta)_p^s \zeta=B  \text { in } \Omega, \smallskip\\\zeta>0 \text{ in } \Omega,\quad\quad \zeta=0  \text { in }\mathbb{R}^n \backslash \Omega;
\end{array}
\end{equation*}
such that for every $\omega\subset\subset\Omega$, $\exists\,c(\omega)>0$ satisfying $\zeta\geq c(\omega)>0$ in $\omega$.
Now, for every $\phi\in W^{1,p}_0(\Omega)$, it holds
\begin{equation}{\label{existence1}}
    \begin{array}{l}
       \quad\int_\Omega |\nabla u_\epsilon|^{p-2}\nabla u_\epsilon\cdot\nabla \phi  + \int_{\mathbb{R}^{n}}\int_{\mathbb{R}^{n}}  \frac{\left|u_\epsilon(x)-u_\epsilon(y)\right|^{p-2}\left(u_\epsilon(x)-u_\epsilon(y)\right)(\phi(x)-\phi(y))}{|x-y|^{n+s p}}  =\int_\Omega\left(
       \frac{\lambda}{(u_\epsilon+\epsilon)^\gamma} +u_\epsilon^r\right)\phi \,d x\smallskip\\
       \geq \int_\Omega B\phi \,dx =\int_\Omega |\nabla \zeta|^{p-2}\nabla \zeta\cdot\nabla \phi \,d x+ \int_{\mathbb{R}^{n}}\int_{\mathbb{R}^{n}} \frac{|\zeta(x)-\zeta(y)|^{p-2}(\zeta(x)-\zeta(y))(\phi(x)-\phi(y))}{|x-y|^{n+s p}} d x d y.
    \end{array}
\end{equation}
We now choose $\phi=(\zeta-u_\epsilon)^{+} \in W^{1,p}_0(\Omega)$ in \cref{existence1}, to get
\begin{equation*}
    \begin{array}{c}
       \int_\Omega (|\nabla \zeta|^{p-2}\nabla \zeta-|\nabla u_\epsilon|^{p-2}\nabla u_\epsilon)\cdot\nabla (\zeta-u_\epsilon)^{+} d x\smallskip\\ + \iint_{\mathbb{R}^{2 n}} \frac{(\left|\zeta(x)-\zeta(y)\right|^{p-2}\left(\zeta(x)-\zeta(y)\right)-\left|u_\epsilon(x)-u_\epsilon(y)\right|^{p-2}\left(u_\epsilon(x)-u_\epsilon(y)\right))((\zeta-u_\epsilon)^{+}(x)-(\zeta-u_\epsilon)^{+}(y))}{|x-y|^{n+s p}} d x d y \smallskip\\\leq 0.
       \end{array}
\end{equation*}
Following the same arguments as in the proof of [\citealp{FractionalEigenvalues}, Lemma 9], we have that the second nonlocal double integral in the above inequality is nonnegative. Hence it holds
\begin{equation*}
      \int_\Omega (|\nabla \zeta|^{p-2}\nabla \zeta-|\nabla u_\epsilon|^{p-2}\nabla u_\epsilon)\cdot\nabla (\zeta-u_\epsilon)^{+} d x\leq 0.
\end{equation*}
Then using \cref{p case}, one gets $u_\epsilon \geq \zeta$ in $\Omega$. Hence there exists $c(\omega)>0$ such that for small $\epsilon$, it holds 
\begin{equation*}
u_\epsilon \geq c(\omega)>0 \text { in } \omega\subset\subset\Omega \text {. }
\end{equation*}
This gives $u_0 \geq c(\omega)>0$ for every $\omega \subset\subset \Omega, u_0>0$ in $\Omega$ and
\begin{equation*}
0\leq\left|\frac{\lambda \phi}{\left(v_\epsilon+\epsilon\right)^{\gamma}}\right| \leq \lambda c^{-\gamma}\left\|\phi \right\|_{L^{\infty}(\Omega)}, \text { for every } \phi \in C_c^{\infty}(\Omega) .    
\end{equation*}
Hence by the dominated convergence theorem, we get for every $ \phi \in C_c^{\infty}(\Omega),$
\begin{equation}{\label{add1}}
    \lim _{\epsilon \rightarrow 0^+ }\int_{\Omega} \frac{\lambda}{\left(u_\epsilon+\epsilon\right)^{\gamma}} \phi \,d x=\int_{\Omega} \frac{\lambda}{u_0^{\gamma}} \phi \,d x .
\end{equation}
As $\{u_\epsilon\}$ bounded in $W^{1,p}_0(\Omega)$ implies $\{u_\epsilon^r\}$ is bounded in $L^{(r+1)^\prime}(\Omega)$ and $u_\ep\to u_0$ a.e. in $\Omega$, hence up to a subsequence  $u^r_\epsilon\rightharpoonup u_0^r$ in $L^{(r+1)^\prime}(\Omega)$ and it holds
\begin{equation}{\label{add2}}
\lim _{\epsilon \rightarrow 0^{+}} \int_{\Omega} u_\epsilon^r \phi \,d x=\int_{\Omega} u_0^r \phi \,d x, \forall \phi \in C_c^{\infty}(\Omega) .    
\end{equation}
Now $\{u_\epsilon\}$ is bounded in $W^{1,p}_0(\Omega)$, so by \cref{embedding2} we get \begin{equation*}
\frac{\left|u_\epsilon(x)-u_\epsilon(y)\right|^{p-2}\left(u_\epsilon(x)-u_\epsilon(y)\right)}{|x-y|^{\frac{n+s p}{p^\prime}}} \rightharpoonup \frac{\left|u_0(x)-u_0(y)\right|^{p-2}\left(u_0(x)-u_0(y)\right)}{|x-y|^{\frac{n+s p}{p^\prime}}}\end{equation*}
in $L^{p^\prime}(\mathbb{R}^{2n})$ and hence
\begin{equation}{\label{add3}}
\begin{array}{l}
\quad \lim _{\epsilon \rightarrow 0^{+}} \int_{\mathbb{R}^{n}}\int_{\mathbb{R}^{n}}  \frac{\left|u_\epsilon(x)-u_\epsilon(y)\right|^{p-2}\left(u_\epsilon(x)-u_\epsilon(y)\right)(\phi(x)-\phi(y))}{|x-y|^{n+s p}} d x d y \smallskip\\
= \int_{\mathbb{R}^{n}}\int_{\mathbb{R}^{n}}  \frac{\left|u_0(x)-u_0(y)\right|^{p-2}\left(u_0(x)-u_0(y)\right)(\phi(x)-\phi(y))}{|x-y|^{n+s p}} d x d y, \quad \forall \phi \in C_c^{\infty}(\Omega) .    
\end{array}
\end{equation}
Further, using \cref{gradconv}, %and the boundedness of $\{u_\ep\}$ in $W^{1,p}_0(\Omega)$, 
one can get $|\nabla u_\epsilon|^{p-2}\nabla u_\epsilon \rightharpoonup |\nabla u_0|^{p-2}\nabla u_0$ in $L^{p^\prime}(\Omega)$ and as $\nabla \phi\in L^p(\Omega)$, so it holds
\begin{equation}{\label{add4}}
 \lim _{\epsilon \rightarrow 0^{+}}\int_\Omega |\nabla u_\epsilon|^{p-2}\nabla u_\epsilon\cdot\nabla \phi \, dx =\int_\Omega |\nabla u_0|^{p-2}\nabla u_0\cdot\nabla \phi \, dx .
\end{equation}
Using \cref{add1,add2,add3,add4}, we conclude
\begin{equation*}
    \begin{array}{c}
\int_\Omega |\nabla u_0|^{p-2}\nabla u_0\cdot\nabla \phi \, dx +\iint_{\mathbb{R}^{2 n}} \frac{\left|u_0(x)-u_0(y)\right|^{p-2}\left(u_0(x)-u_0(y)\right)(\phi(x)-\phi(y))}{|x-y|^{n+s p}} d x d y=\int_{\Omega} \left(\frac{\lambda}{u_0^{\gamma}}+u_0^r\right)\phi \,d x,
    \end{array}
\end{equation*}
$\forall \phi \in C_c^{\infty}(\Omega)$. This completes the proof.
\end{proof}
\begin{remark}{\label{testfunc}}
Following the lines of [\citealp{garain}, Lemma 5.1], it can be shown that any function in $W^{1,p}_0(\Omega)$ can be chosen as a test function for \cref{prob}.
\end{remark}
\section{Proof of \cref{mainth1}}{\label{th1}} Using \cref{weaksolutiontoprob} we already have $w_0$ and $v_0$ are two positive weak solutions of \cref{prob} for $\lambda \in(0, \Lambda)$. It suffices to show that $w_0 \neq v_0$. Choosing $\phi=u_\epsilon \in W^{1,p}_0(\Omega)$ as a test function in \cref{approximated} we get
\begin{equation*}
    \begin{array}{l}
 \int_\Omega |\nabla u_\epsilon|^p d x+   \int_{\mathbb{R}^{n}}\int_{\mathbb{R}^{n}}  \frac{\left|u_\epsilon(x)-u_\epsilon(y)\right|^{p}}{|x-y|^{n+s p}} d x d y 
= \lambda \int_{\Omega} \frac{u_\epsilon}{\left(u_\epsilon+\epsilon\right)^{\gamma}} d x+\int_{\Omega} u_\epsilon^{r+1} d x .
    \end{array}
\end{equation*}
Since $r+1<p_s^*$, using the compact embedding of \cref{Sobolev embedding}, one obtains
\begin{equation}{\label{crucial}}
\lim _{\epsilon \rightarrow 0^{+}} \int_{\Omega}u_\epsilon^{r+1} d x=\int_{\Omega} u_0^{r+1} d x .    
\end{equation}
Moreover, since
%\begin{equation*}
$0\leq \frac{u_\epsilon}{\left(u_\epsilon+\epsilon\right)^{\gamma}} \leq u_\epsilon^{1-\gamma},    
$ %\end{equation*}
using \cref{step2} together with Vitali convergence theorem, it holds
\begin{equation*}
\lambda \lim _{\epsilon \rightarrow 0^{+}} \int_{\Omega} \frac{u_\epsilon}{\left(u_\epsilon+\epsilon\right)^{\gamma}} d x=\lambda \int_{\Omega} u_0^{1-\gamma} d x .
\end{equation*}
Hence we reach at
\begin{equation}{\label{laststep1}}
    \begin{array}{l}
 \lim _{\epsilon \rightarrow 0^{+}}\left(\int_\Omega |\nabla u_\epsilon|^p d x+  \int_{\mathbb{R}^{n}}\int_{\mathbb{R}^{n}}  \frac{\left|u_\epsilon(x)-u_\epsilon(y)\right|^{p}}{|x-y|^{n+s p}} d x d y \right)
= \lambda \int_{\Omega} u_0^{1-\gamma} d x +\int_{\Omega} u_0^{r+1} d x .
    \end{array}
\end{equation}
Using \cref{testfunc} we can choose $\phi=u_0$ as a test function in \cref{prob} to deduce that
\begin{equation}{\label{laststep2}}
    \begin{array}{l}
 \int_\Omega |\nabla u_0|^p d x+  \int_{\mathbb{R}^{n}}\int_{\mathbb{R}^{n}}  \frac{\left|u_0(x)-u_0(y)\right|^{p}}{|x-y|^{n+s p}} d x d y 
= \lambda \int_{\Omega} u_0^{1-\gamma} d x +\int_{\Omega} u_0^{r+1} d x .
    \end{array}
\end{equation}
Merging \cref{laststep1,laststep2}, we get
\begin{equation*}
    \begin{array}{l}
 \lim _{\epsilon \rightarrow 0^{+}}\left(\int_\Omega |\nabla u_\epsilon|^p d x+  \int_{\mathbb{R}^{n}}\int_{\mathbb{R}^{n}}  \frac{\left|u_\epsilon(x)-u_\epsilon(y)\right|^{p}}{|x-y|^{n+s p}} d x d y \right)
= \int_\Omega |\nabla u_0|^p d x+  \int_{\mathbb{R}^{n}}\int_{\mathbb{R}^{n}}  \frac{\left|u_0(x)-u_0(y)\right|^{p}}{|x-y|^{n+s p}} d x d y ,%\lambda \int_{\Omega} u_0^{1-\gamma} d x +\int_{\Omega} u_0^{r+1} d x ,
    \end{array}
\end{equation*}
which implies the strong convergence of $u_\epsilon$ to $u_0$ in $W^{1,p}_0(\Omega)$. Now again by \cref{step2} and Vitali convergence theorem, one can get
\begin{equation*}
    \lim _{\epsilon \rightarrow 0} \int_{\Omega}\left[\left(u_\epsilon+\epsilon\right)^{1-\gamma}-\epsilon^{1-\gamma}\right] d x=\int_{\Omega} u_0^{1-\gamma} d x,
\end{equation*}
which together with \cref{crucial} and the strong convergence of $u_\epsilon$ implies $\lim _{\epsilon \rightarrow 0} I_{\lambda, \epsilon}(u_\epsilon)=I_\lambda(u_0)$. Hence, from \cref{different} we deduce $w_0 \neq v_0$.
\section{Proof of \cref{mainth2}}{\label{th2}}
We first include the following lemma.
\begin{lemma}{\label{boundednessofsubsolution}}
    For each $\gamma>0$, there exists a positive constant $T$ such that every $z\in W^{1,2}_0(\Omega)$, $z>0$ satisfying 
    \begin{equation}{\label{bddine}}
        \int_\Omega \nabla z\cdot \nabla \phi \, d x+\int_{\mathbb{R}^n}\int_{\mathbb{R}^n}\frac{(z(x)-z(y))(\phi(x)-\phi(y))}{|x-y|^{n+2s}} d x d y\leq \int_\Omega \frac{\lambda}{z^\gamma}\phi\, dx,\quad \forall \phi\in W^{1,2}_0(\Omega),\, \phi>0,
    \end{equation}
    belongs to $L^\infty(\Omega)$ with $\|z\|_{L^\infty(\Omega)}\leq T\lambda^{\frac{1}{\gamma+1}}$ for all $\lambda>0$.
\end{lemma}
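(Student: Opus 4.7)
The plan is to establish this $L^\infty$ bound through a truncation argument in the spirit of Stampacchia/De Giorgi, combined with a scaling reduction that produces the precise $\lambda$-dependence. The mixed local-nonlocal structure causes no essential difficulty because the nonlocal contribution, when tested against a truncation of $z$, is nonnegative and can simply be discarded.

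First I would reduce to the case $\lambda=1$ by scaling. Set $w=\lambda^{-1/(\gamma+1)}z$; plugging $z=\lambda^{1/(\gamma+1)}w$ into \cref{bddine} and dividing by $\lambda^{1/(\gamma+1)}$, one checks that $w$ satisfies the analogous inequality with $\lambda$ replaced by $1$, i.e.\ $-\Delta w+(-\Delta)^s w\le w^{-\gamma}$ in the distributional subsolution sense. Hence it suffices to produce $T>0$, depending only on $n,\gamma,|\Omega|$, such that any such $w$ satisfies $\|w\|_{L^\infty(\Omega)}\le T$; then $\|z\|_{L^\infty(\Omega)}\le T\lambda^{1/(\gamma+1)}$.

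Next, for $k>0$, test the rescaled inequality with $\phi=(w-k)^+\in W^{1,2}_0(\Omega)$ (which is an admissible nonnegative test function). For the local term this yields $\int_{\Omega}|\nabla(w-k)^+|^2\,dx$. For the nonlocal double integral, the elementary monotonicity fact
\begin{equation*}
\bigl(w(x)-w(y)\bigr)\bigl((w-k)^+(x)-(w-k)^+(y)\bigr)\ge 0\qquad \text{a.e.}
\end{equation*}
(the truncation is monotone nondecreasing and $1$-Lipschitz) shows that the nonlocal contribution is nonnegative and may be dropped. On the superlevel set $A_k:=\{w>k\}$ we have $w^{-\gamma}\le k^{-\gamma}$, so
\begin{equation*}
\int_{\Omega}|\nabla(w-k)^+|^2\,dx\ \le\ k^{-\gamma}\int_{A_k}(w-k)^+\,dx.
\end{equation*}
Applying the Sobolev embedding (\cref{Sobolev embedding} with $p=2$) on the left and Hölder's inequality on the right gives
\begin{equation*}
\|(w-k)^+\|_{L^{2^*}(\Omega)}\ \le\ C\,k^{-\gamma}\,|A_k|^{1-1/2^*},
\end{equation*}
and comparing with the trivial lower bound $\|(w-k)^+\|_{L^{2^*}}\ge(h-k)|A_h|^{1/2^*}$ for $h>k$ yields the Stampacchia-type decay
\begin{equation*}
|A_h|\ \le\ \frac{C^{2^*}}{(h-k)^{2^*}\,k^{\gamma\,2^*}}\,|A_k|^{2^*-1}.
\end{equation*}

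Finally, I would iterate along a geometric sequence $k_n=T_0(2-2^{-n})$. Using $k_n\ge T_0$ and $k_{n+1}-k_n=T_0\,2^{-n-1}$, the decay estimate becomes $|A_{k_{n+1}}|\le C_1\,2^{(n+1)2^*}T_0^{-(1+\gamma)2^*}|A_{k_n}|^{2^*-1}$. Since $2^*-1>1$, the standard geometric-iteration lemma (Stampacchia) ensures $|A_{k_n}|\to 0$, and hence $\|w\|_{L^\infty(\Omega)}\le 2T_0$, provided the initial datum satisfies $|A_{T_0}|\le|\Omega|\le c\,T_0^{(1+\gamma)2^*/(2^*-2)}$ for an explicit $c=c(n)>0$. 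Choosing $T_0$ large enough in terms of $n,\gamma,|\Omega|$ fixes such a constant, and setting $T:=2T_0$ and undoing the scaling gives $\|z\|_{L^\infty(\Omega)}\le T\lambda^{1/(\gamma+1)}$, which is the claim. The only real subtlety is confirming that $\lambda$ cancels out cleanly in the iteration — which it does thanks to the precise exponent $1/(\gamma+1)$ in the scaling — so the resulting $T$ is genuinely $\lambda$-independent.
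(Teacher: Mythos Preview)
Your proof is correct and follows essentially the same route as the paper: reduce to $\lambda=1$ by the scaling $w=\lambda^{-1/(\gamma+1)}z$, test with the truncation $(w-k)^+$, discard the nonnegative nonlocal term, and run a Stampacchia level-set iteration via the Sobolev inequality. The only cosmetic difference is that the paper restricts to levels $l\ge 1$ so that $z^{-\gamma}\le 1$ and the $k^{-\gamma}$ factor disappears before invoking Stampacchia's lemma, whereas you carry $k^{-\gamma}$ through the explicit geometric iteration; both variants yield the same $\lambda$-independent bound $T$.
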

\begin{proof}
    If $\lambda=1$, then for $l\geq 1$ we choose $\phi=(z-l)^+$ as a test function to obtain
    \begin{equation*}
         \int_\Omega| \nabla (z-l)^+|^2 d x+\int_{\mathbb{R}^n}\int_{\mathbb{R}^n}\frac{(z(x)-z(y))((z-l)^+(x)-(z-l)^+(y))}{|x-y|^{n+2s}} d x d y\leq \int_{\{z>l\}} \frac{(z-l)^+}{z^\gamma}dx.
    \end{equation*}
Note that the second term in the left-hand-side is nonnegative and hence we get
 \begin{equation*}
         \int_\Omega| \nabla (z-l)^+|^2 d x%+\int_{\mathbb{R}^n}\int_{\mathbb{R}^n}\frac{(z(x)-z(y))((z-k)^+(x)-(z-k)^+(y))}{|x-y|^{n+2s}} d x d y
         \leq \int_{\{z>l\}} (z-l)^+dx.
    \end{equation*}
    One can now use %the well-known
    Stampacchia's method [\citealp{stampacchia}, Lemma 4.1] to deduce $\|z\|_{L^\infty(\Omega)}\leq T$ for some constant $T>0$. \\
    For $0<\lambda\neq 1$, let $0<w\in W^{1,2}_0(\Omega)$ satisfies \cref{bddine}. Taking $z=\left(\frac{1}{\lambda}\right)^{\frac{1}{\gamma+1}}w\in W^{1,2}_0(\Omega)$, we get by \cref{bddine}, if $\phi>0$,
    \begin{equation*}
    \begin{array}{l}
        \quad  \int_\Omega \nabla z\cdot \nabla \phi \, d x+\int_{\mathbb{R}^n}\int_{\mathbb{R}^n}\frac{(z(x)-z(y))(\phi(x)-\phi(y))}{|x-y|^{n+2s}} d x d y\\=\left(\frac{1}{\lambda}\right)^{\frac{1}{\gamma+1}} \left(\int_\Omega \nabla w\cdot \nabla \phi \, d x+\int_{\mathbb{R}^n}\int_{\mathbb{R}^n}\frac{(w(x)-w(y))(\phi(x)-\phi(y))}{|x-y|^{n+2s}} d x d y\right)\leq \left(\frac{1}{\lambda}\right)^{\frac{1}{\gamma+1}}\int_\Omega \frac{\lambda}{w^\gamma}\phi\, dx.%\quad \forall \phi\in H^1_0(\Omega).
    \end{array}
    \end{equation*}
    This implies
     \begin{equation*}
    \int_\Omega \nabla z\cdot \nabla \phi \, d x+\int_{\mathbb{R}^n}\int_{\mathbb{R}^n}\frac{(z(x)-z(y))(\phi(x)-\phi(y))}{|x-y|^{n+2s}} d x d y\leq \int_\Omega \frac{1}{z^\gamma}\phi\, dx,\quad \forall \phi\in W^{1,2}_0(\Omega),\, \phi>0,
    \end{equation*}
    and hence $\|z\|_{L^\infty(\Omega)}\leq T$, which gives $\|w\|_{L^\infty(\Omega)}\leq T\lambda^{\frac{1}{\gamma+1}}$.  
    \end{proof}
We will follow \cite{boccardoexistence} and construct suitable sub and supersolutions to the approximated problem 
\begin{equation}{\label{approximated2}}
\begin{split}
-\Delta u_k+(-\Delta)^s u_k&=\frac{\lambda}{(u_k^++\frac{1}{k})^{\gamma}}+(u_k^+)^r  \text { in } \Omega, %\\u&>0 \text{ in } \Omega.
\\u_k&=0  \text { in }\mathbb{R}^n \backslash \Omega.
\end{split}
\end{equation}
One can see [\citealp{garain}, Lemma 3.2] to find a unique positive $w_k\in W^{1,2}_0(\Omega)$, for each $k\in \mathbb{N}$, which is a weak solution to
\begin{equation*}{\label{approximated3}}
\begin{split}
-\Delta w_k+(-\Delta)^s w_k&=\frac{\lambda}{(w_k^++\frac{1}{k})^{\gamma}}%+(w_k^+)^r 
\text { in } \Omega, %\\u&>0 \text{ in } \Omega,
\\w_k&=0  \text { in }\mathbb{R}^n \backslash \Omega,
\end{split}
\end{equation*}
and satisfies for each $\omega\subset\subset\Omega$, $w_{k}\geq c(\omega)>0$ in $\omega$, for some $c\equiv c(\omega)$. Also, $\{w_k\}$ is monotonically increasing in $k$. As each $w_k$ is positive, so it solves 
$%\begin{equation*}
%\begin{split}
-\Delta w_k+(-\Delta)^s w_k=\frac{\lambda}{(w_k+\frac{1}{k})^{\gamma}}%+(w_k^+)^r 
\text { in } \Omega, %\\u&>0 \text{ in } \Omega,
%\\w_k&=0  \text { in }\mathbb{R}^n \backslash \Omega,
%\end{split}
%\end{equation*}
$ and $\frac{\lambda}{(w_k+\frac{1}{k})^{\gamma}}%+(w_k^+)^r
\leq \frac{\lambda}{(w_k+\frac{1}{k})^{\gamma}}+(w_k)^r$ implies that $w_k\in W_0^{1,2}(\Omega)$ is a subsolution to \cref{approximated2}.\smallskip\\
In order to construct a supersolution, we take $z_{k,t}\in W^{1,2}_0(\Omega)$ to be the unique positive weak solution of 
%\begin{equation*}
%\begin{split}
%-\Delta z_{k,t}+(-\Delta)^s z_{k,t}&=\frac{t}{(z_{k,t}^++\frac{1}{k})^{\gamma}}%+(w_k^+)^r 
%\text { in } \Omega. %\\u&>0 \text{ in } \Omega,
%\\w_k&=0  \text { in }\mathbb{R}^n \backslash \Omega,
%\end{split}
%\end{equation*}
%The positivity of $z_{k,t}$ guarantees that $z_{k,t}$ satisfies 
\begin{equation*}
\begin{split}
-\Delta z_{k,t}+(-\Delta)^s z_{k,t}&=\frac{t}{(z_{k,t}+\frac{1}{k})^{\gamma}}%+(w_k^+)^r 
\text { in } \Omega. %\\u&>0 \text{ in } \Omega,
%\\w_k&=0  \text { in }\mathbb{R}^n \backslash \Omega,
\end{split}
\end{equation*}
By \cref{boundednessofsubsolution}, we get the existence of $M>0$ such that
\begin{equation*}
    \|z_{k,t}\|_{L^\infty(\Omega)}\leq Mt^{\frac{1}{\gamma+1}}.
\end{equation*}
Starting from here, one can follow [\citealp{boccardoexistence}, Theorem 2.1, Step 3] to find the existence of $\Lambda>0$ such that for each $0<\lambda< \Lambda$, there exists $T\equiv T(\lambda)> \lambda >0$; and $z_{k,t}$ is a supersolution to \cref{approximated2}, $\forall \, t\geq T(\lambda)$ for large enough $k$. \smallskip\\Now we show $w_k \leq z_{k, t}$. Indeed
\begin{equation*}
-\Delta(w_k-z_{k, t})+(-\Delta)^s(w_k-z_{k, t})=\frac{\lambda}{(w_k+\frac{1}{k})^\gamma}-\frac{t}{(z_{k, t}+\frac{1}{k})^\gamma}  , \text{ in } \Omega.  
\end{equation*}
Taking $(w_k-z_{k,t})^+$ as a test function, we obtain
\begin{equation*}
    \begin{array}{l}
\int_\Omega|\nabla (w_k-z_{k,t})^+|^2dx+\int_{\mathbb{R}^n}\int_{\mathbb{R}^n}\frac{((w_k-z_{k,t})(x)-(w_k-z_{k,t})(y))((w_k-z_{k,t})^+(x)-(w_k-z_{k,t})^+(y))}{|x-y|^{n+2s}}dxdy\smallskip\\= \int_{\Omega}\left[\frac{\lambda}{(w_k+\frac{1}{k})^\gamma}-\frac{t}{(z_{k, t}+\frac{1}{k})^\gamma}\right](w_k-z_{k, t})^{+}dx
= \int_{\Omega}\left[\frac{\lambda}{(w_k+\frac{1}{k})^\gamma}-\frac{\lambda}{(z_{k, t}+\frac{1}{k})^\gamma}\right](w_k-z_{k, t})^{+} d x\smallskip\\\quad\quad\quad\quad\quad\quad\quad\quad\quad\quad\quad\quad\quad\quad\quad\quad\quad\quad\quad\quad\quad\quad+\int_{\Omega}\left[\frac{\lambda}{(z_{k,t}+\frac{1}{k})^\gamma}-\frac{t}{(z_{k, t}+\frac{1}{k})^\gamma}\right](w_k-z_{k, t})^{+}dx
 \smallskip\\
\quad\quad\quad\quad\quad\quad\quad\quad\quad\quad\quad\quad\quad\quad\quad\quad\quad\quad\quad\quad\quad= \lambda \int_{\left\{w_k \geq z_{k, t}\right\}}\left[\frac{1}{(w_k+\frac{1}{k})^\gamma}-\frac{1}{(z_{k, t}+\frac{1}{k})^\gamma}\right](w_k-z_{k, t})dx\smallskip\\
\quad\quad\quad\quad\quad\quad\quad\quad\quad\quad\quad\quad\quad\quad\quad\quad\quad\quad\quad\quad\quad\quad+\int_{\left\{w_k \geq z_{k, t}\right\}} \frac{1}{(z_{k,t}+\frac{1}{k})^\gamma}[\lambda-t]\left(w_k-z_{k, t}\right)dx .         
    \end{array}
\end{equation*}
Note that, the second term on the left in the first line is nonnegative; in the last line, the first integral is negative due to the monotonicity of $v\to \frac{1}{\left(v+\frac{1}{k}\right)^\gamma}$ for $v>0$ and the second integral is negative since $t \geq T(\lambda)>\lambda$. This readily gives $w_k\leq z_{k,t}$.
\\One now can define $g(v)=\frac{\lambda}{\left(v+\frac{1}{k}\right)^\gamma}+v^p+k^{\gamma+1} \lambda \gamma v, v \in[0, \infty)$ and note that $g$ is increasing. Further, observing that for any two functions $\phi,\psi\in W^{1,2}_0(\Omega)$, the integral \begin{equation*}\int_{\mathbb{R}^n}\int_{\mathbb{R}^n}\frac{((\phi-\psi)(x)-(\phi-\psi)(y))((\phi-\psi)^+(x)-(\phi-\psi)^+(y))}{|x-y|^{n+2s}}dxdy\end{equation*}is nonnegative, one can use the classical subsolution-supersolution technique (see [\citealp{LCE}, Chapter 9]) to get the existence of $u_k\in W^{1,2}_0(\Omega)$, which is a solution of
\cref{approximated2} and satisfies
\begin{equation}{\label{unibdd}}
w_k \leq u_k \leq z_{k, t} \leq M t^{\frac{1}{\gamma+1}} .    
\end{equation}
Again the positivity of the increasing sequence $\left\{w_k\right\}$ inside  compactly contained subsets of $\Omega$ gaurantees that for every $\omega \subset \subset \Omega$ there exists $c_\omega>0$ (independent of $k$ ) such that
\begin{equation}{\label{unibdd2}}
u_k\geq w_k \geq w_1 \geq c_\omega>0, \quad \text { in }  \omega, \text { for every } k \in \mathbb{N} .    
\end{equation}
For $0<\gamma\leq 1$, we can now choose $u_k$ as a test function in \cref{approximated2} and use \cref{unibdd} to get $\left\{u_k\right\}$ is bounded in $W^{1,2}_0(\Omega)$. Then $\exists\,u \in W^{1,2}_0(\Omega) \cap L^{\infty}(\Omega)$ such that up to a subsequence  $u_{k}\rightharpoonup u$ in $W^{1,2}_0(\Omega)$ and a.e. to $u \geq w_1>0$ in $\Omega$.
Furthermore we have, for $\phi$ in $W^{1,2}_0(\omega)$,
\begin{equation*}
    0 \leq \frac{\lambda \phi}{\left(u_k+\frac{1}{k}\right)^\gamma} \leq \frac{\lambda| \phi|}{\left(c_\omega\right)^\gamma} \text{ and } u_k^{r}\phi\leq\left( M t^{\frac{1}{\gamma+1}} \right)^{r}|\phi| .
\end{equation*}
Therefore, by the Dominated convergence theorem, one has
\begin{equation*}
\lim _{k \rightarrow\infty} \int_{\Omega} \frac{\lambda \phi}{\left(u_k+\frac{1}{k}\right)^\gamma}=\lambda \int_{\Omega} \frac{\phi}{u^\gamma}   \text { and } \lim _{k \rightarrow\infty} \int_{\Omega} u_k^r\phi \,dx=\int_{\Omega} u^r\phi \,dx. 
\end{equation*}
This along with $u_k\rightharpoonup u$ in $W^{1,2}_0(\Omega)$ assures that $u$ is a weak solution to \cref{prob2} for the case $0<\gamma\leq 1$. \smallskip\\
For $\gamma>1$, as $u_k$ is bounded, one can choose $u_k^\gamma$ as a test function in \cref{approximated2} to get $\{u_k^{\frac{\gamma+1}{2}}\}$ is bounded in $W^{1,2}_0(\Omega)$. Indeed, we get by using \cref{unibdd}
    \begin{equation}{\label{posi}}
\int_{\Omega} \nabla u_k \cdot\nabla u_k^\gamma\, d x+\int_{\mathbb{R}^n}\int_{\mathbb{R}^n}\frac{(u_k(x)-u_k(y))(u_k^\gamma(x)-u_k^\gamma(y))}{|x-y|^{n+2s}} d x d y=\lambda\int_\Omega \frac{u_k^{\gamma}}{(u_k+\frac{1}{k})^\gamma} d x +\int_\Omega u^{\gamma+r}\,d x %\quad \forall \phi \in W_0^{1,2}(\omega)
\leq C.
\end{equation}
Here we can use item $(i)$ of \cref{algebraic} to get
 \begin{equation*}
\frac{4\gamma}{(\gamma+1)^2}\int_{\Omega} |\nabla u_k^{\frac{\gamma+1}{2}}|^2 d x+\frac{4\gamma}{(\gamma+1)^2}\int_{\mathbb{R}^n}\int_{\mathbb{R}^n}\frac{(u_k^{\frac{\gamma+1}{2}}(x)-u_k^{\frac{\gamma+1}{2}}(y))^2}{|x-y|^{n+2s}} d x d y%\leq\lambda\int_\Omega u_k^{\gamma-1} d x +\int_\Omega u^{\gamma+r}\,d x %\quad \forall \phi \in W_0^{1,2}(\omega)
\leq C.
\end{equation*}
The above readily implies $\{u_k^{\frac{\gamma+1}{2}}\}$ is bounded in $W^{1,2}_0(\Omega)$. Since $\gamma>1$, and $\Omega$ is bounded, and $\left\{u_k\right\}_k$ is uniformly bounded in $L^{\gamma+1}(\Omega)$, we deduce that $\left\{u_k\right\}_k$ is uniformly bounded in $L^2(\Omega)$, in particular in $L^2(K )$, for every subset $K$ compactly contained in $ \Omega$. Further, as $K\times K \subset \Omega\times\Omega \subset \mathbb{R}^{2n}$ and all the integrals in the left-hand-side of \cref{posi} are positive, hence we have,
\begin{equation*}
         \int_K \int_K \frac{(u_k(x)-u_k(y))(u_k^\gamma(x)-u_k^\gamma(y))}{|x-y|^{n+2 s}} d x d y \leq C
    \text{ and }
   \int_K  u_k^{\gamma-1}|\nabla u_k|^2 d x \leq C,
\end{equation*}
for every $K \subset\subset \Omega$. We now apply the item $(iii)$ of \cref{algebraic}, to get
\begin{equation*}
    \int_K \int_K \frac{\left| u_k(x)-u_k(y)\right|^2\left|u_k(x)+u_k(y)\right|^{\gamma-1}}{|x-y|^{n+2 s}} d x d y  \leq C.
\end{equation*}
Using the positivity of $u_k$ in $K$ for all $k$ (see \cref{unibdd2}), one now gets
\begin{equation}{\label{est1}}
 \int_K \int_K \frac{\left| u_k(x)-u_k(y)\right|^2}{|x-y|^{n+2 s}} d x d y \leq \frac{2^{1-\gamma} C_\gamma}{c_{K%[t_1,t_2]
}^{\gamma-1}} \text{ and }  \int_K |\nabla u_k|^2d x d y \leq \frac{C_\gamma}{ c_{K}^{\gamma-1}}.
\end{equation}
Hence $\left\{u_k\right\}_k$ is uniformly bounded in $W_{\operatorname{loc}}^{1,2}(\Omega)$. So $u_k\rightharpoonup u$ in $W_{\operatorname{loc}}^{1,2}(\Omega)$. One can now follow [\citealp{garain}, Theorem 2.13], [\citealp{scase}, Theorem 3.6] and the same argument of the case $0<\gamma\leq 1$ to deduce that u is a weak solution of \cref{prob2}.
\begin{remark}
    The cases $0<r<1$ and $r=1$ are same as \cite{boccardoexistence}, we refer to Theorem 2.4 and Remark 2.5 therein.
\end{remark}\begin{remark}
    \cref{mainth2} does not require the conditions $s\in(0,1/2)$, $n>2$, $r<2^*-1$ and smoothness of the boundary of $\Omega$ or convexity of the domain. This result also interprets the fact $u=0$ in $\mathbb{R}^n\backslash\Omega$ in the sense that some powers of $u$ is in $W^{1,2}_0(\Omega)$.  
\end{remark}
\begin{remark}{\label{nonexistence}}
 We remark that there exists a positive number $\overline{\Lambda}$ independent of $k$ such that \cref{prob2} has no solution if $\lambda\geq \overline{\Lambda}$.  Let $\omega \subset \subset \Omega$ and $e_1^\omega$ be the first (positive) eigenfunction (see [\citealp{eigenvalue}, Proposition 5.1]) of
\begin{equation*}
\begin{split}
-\Delta e_1^\omega+(-\Delta)^s e_1^\omega&=\lambda_1^\omega e_1^\omega \text { in } \omega, \\e_1^\omega &>0 \text{ in } \omega,\\e_1^\omega&=0  \text { in }\mathbb{R}^n \backslash \omega.
\end{split}
\end{equation*}
Clearly in $\Omega$ we have $-\Delta e_1^\omega+(-\Delta)^s e_1^\omega \leq \lambda_1^\omega e_1^\omega$. Now if there exists a solution to \cref{prob2}, for every $\lambda>0$, we use $e_1^\omega$ as a test function in \cref{prob2} to obtain
\begin{equation*}
\lambda \int_{\Omega} \frac{e_1^\omega}{u^\gamma}dx+\int_{\Omega} u^r e_1^\omega dx \leq \lambda_1^\omega \int_{\Omega} u e_1^\omega dx .    
\end{equation*}
One can now use Young's inequality to get
\begin{equation*}
   \lambda \int_{\Omega} \frac{e_1^\omega}{u^\gamma}dx+\int_{\Omega} u^r e_1^\omega dx \leq  \frac{1}{2} \int_{\Omega} u^r e_1^\omega dx+ C_1(\lambda_1)^{r^\prime}\int_\Omega e_1^\omega dx,
\end{equation*}
that is
\begin{equation*}
    \int_{\Omega}\left[\frac{\lambda}{u^\gamma}+\frac{1}{2} u^r-C_1(\lambda_1^\omega)^{r^{\prime}}\right] e_1^\omega dx \leq 0 .
\end{equation*}
Since the real function $\frac{\lambda}{t^\gamma}+\frac{1}{2} t^r-C_1(\lambda_1^\omega)^{r^{\prime}}, t>0$, is greater than $C_2 \lambda^{\frac{r}{r+\gamma}}-C_1(\lambda_1^\omega)^{r^{\prime}}$, for some $C_2>0$, the last inequality implies that
\begin{equation*}
    \int_{\Omega}\left[C_2 \lambda^{\frac{r}{r+\gamma}}-C_1\left(\lambda_1^\omega\right)^{r^{\prime}}\right] e_1^\omega\leq 0,
\end{equation*}
which is impossible for $\lambda$ large enough. This contradiction proves the nonexistence for large $\lambda$.
\end{remark}
\section*{Preliminaries for the proof of \cref{mainth3}} We have considered $\Omega$ to be a bounded strictly convex domain with smooth boundary, $1<r<2^*-1=\frac{n+2}{n-2}$ and $s\in(0,1/2)$. As before, for $k\in \mathbb{N}$ we take the approximated problems
\begin{equation}{\label{approximated4}}
\begin{split}
-\Delta u+(-\Delta)^s u&=\lambda f_k(u)+g(u)%\frac{\lambda}{(u_k^++\frac{1}{k})^{\gamma}}+(u_k^+)^r 
\text { in } \Omega, 
\\u&>0 \text{ in } \Omega, 
\\u&=0  \text { in }\mathbb{R}^n \backslash \Omega,
\end{split}
\end{equation}
where $f_k$ and $g$ are the continuous functions given by
\begin{equation*}
    f_k(t)=\frac{1}{(t+\frac{1}{k})^\gamma}, \quad g(t)=t^r, 1<r<2^*-1; \quad \text { for } t \geq 0 .
\end{equation*}
Clearly, being continuously differentiable, $f_k$ and $g$ are locally Liptchitz and they satisfy the following properties:
\begin{equation}{\label{property1}}
  g(0)=0,\quad \lim _{t \rightarrow 0^{+}} \frac{g(t)}{t}=0 \quad \text { and }\quad \lim _{t \rightarrow 0^{+}} \frac{f_k(t)}{t}=+\infty, \\
\end{equation}
\begin{equation}{\label{property2}}
 \lim _{t \rightarrow+\infty} \frac{g(t)}{t^r}=1>0, \quad
 \lim _{t \rightarrow+\infty} \frac{f_k(t)}{t^r}=0, \quad \text { uniformly in } k, \\
    \end{equation}
\begin{equation}{\label{property3}}
    \frac{\lambda f_k(t)+g(t)}{t^\sigma} \text { is nonincreasing for } t \geq 0, \quad \text { with } \sigma=\frac{n+2}{n-2} .
\end{equation} 
Further, for the non-perturbed case, we consider $w_{k,\lambda}\in W^{1,2}_0(\Omega)\cap L^\infty(\Omega)$, for each $k\in \mathbb{N}$, to be the unique positive weak solution (see [\citealp{garain}, Lemaa 3.2]) to
\begin{equation}{\label{approximated5}}
\begin{split}
-\Delta w+(-\Delta)^s w&=\frac{\lambda}{(w+\frac{1}{k})^{\gamma}}%+(w_k^+)^r 
\text { in } \Omega, \\w&>0 \text{ in } \Omega,
\\w&=0  \text { in }\mathbb{R}^n \backslash \Omega.
\end{split}
\end{equation}
The sequence $\{w_{k,\lambda}\}$ is monotonically increasing with respect to $k$ and for every $\omega\subset\subset\Omega$, there exists a constant $c\equiv c(\omega)>0$, independent of $k$ such that
\begin{equation}{\label{greater}}
    w_{k,\lambda}\geq c(\omega)>0.
\end{equation} Note that, each $w_{k,\lambda}$ satisfies hypothesis of \cref{boundednessofsubsolution}.
\begin{remark}{\label{delta}}
    We make an important remark that if $\lambda=0$ and $k \in \mathbb{N}$, then the problem \cref{approximated4} does not depend on $k$ and every solution of it satisfies
\begin{equation*}
    \|u\|_{L^\infty(\Omega)}>\delta_0 \text{ for some } \delta_0>0.
\end{equation*}
Indeed, by \cref{property1}, we can select $\delta_0>0$ such that $g(t)<\lambda_1 t$ for all $t \in\left[0, \delta_0\right]$, where $\lambda_1$ is the first eigenvalue and let $e_1$ be the first positive eigenfunction (see [\citealp{eigenvalue}, Proposition 5.1]) corresponding to the equation
\begin{equation*}
-\Delta v+(-\Delta)^s v=\lambda_1 v \text { in } \Omega,% \quad v>0 \text{ in } \Omega, 
\quad v=0 \text { in } \mathbb{R}^n \backslash \Omega .    
\end{equation*}
Now taking $e_1$ as test function in \cref{approximated4} with $\lambda =0$, we get
\begin{equation*}
    \int_{\Omega}\left(\lambda_1 u-g(u)\right) e_1=0.
\end{equation*}
and consequently, $\|u\|_{L^\infty(\Omega)}>\delta_0$.
\end{remark}
\begin{remark}{\label{contin}}
    Every weak $W^{1,2}_0(\Omega)$ solution of \cref{approximated4,approximated5} are indeed classical $ {C}^2$ solution thus validating our calculations. We use the so-called bootstrap argument to show that $u$ (weak solution of \cref{approximated4}) is Hölder continuous on $\overline{\Omega}$.  Set $\kappa r=(n+2) /(n-2)$ and note that $\kappa>1$.
    \smallskip\\ As $u\in W^{1,2}_0(\Omega)$, by Sobolev embedding $u \in L^{2^*%\frac{2n}{(n-2)}
    }(\Omega)$. Further one can see [\citealp{ambrosetti}, Chapter 1, Section 2] and get that as $|\lambda f_k(t)+g(t)|\leq a_{k}+|t|^r$, for some $a_{k}>0$, the Nemitski operator corresponding to $\lambda f_k+g$ is continuous from $L^\alpha(\Omega)$ to $L^\beta(\Omega)$, where $r=\alpha/\beta.$ Using this with $\alpha=2^*$ we infer that $\lambda f_k( u)+g(u) \in L^\beta(\Omega)$ with $\beta=\frac{2^*}{r}  =\frac{2\kappa n}{(n+2)}$. From here, one can use [\citealp{valdinoci}, Theorem 1.4] to get $u \in W^{2, \beta}(\Omega)$. If $2 \beta>n$ then $u \in C^{0, \eta}(\overline{\Omega})$. \\Otherwise, we can repeat the above steps:\\
(i) by Sobolev embedding theorem one has that $u \in L^{q^{\prime}}(\Omega)$, with
\begin{equation*}
    q^{\prime}=\frac{n \beta}{n-2 \beta}>\kappa \frac{2 n}{n-2} ;
\end{equation*}
(ii) it follows that $\lambda f_k( u)+g(u) \in L^{\beta^{\prime}}(\Omega)$ with $\beta^{\prime}=q^{\prime} / r>\kappa \beta$;\\
(iii) another use of [\citealp{valdinoci}, Theorem 1.4] gives $u \in W^{2, \beta^{\prime}}(\Omega)$.
\smallskip\\In any case, after a finite number of times, one finds that $u \in W^{2, q}(\Omega)$ with $2q>n $. Then the Sobolev embedding theorem yields $W^{2, q}(\Omega) \subset C^{0, \eta}(\overline{\Omega})$ with some $\eta < 1$.
\smallskip\\At this point we can apply [\citealp{FaberKrahn}, Theorem 2.8]. Indeed, letting $h(x)=\lambda f_k( u(x))+g(u(x)), u$ is a weak solution of $-\Delta u+(-\Delta)^su=h$ with $h \in C^{0, \eta}(\overline{\Omega})$ for some $\eta < 1$. Hence $u \in C^{2, \eta}(\overline{\Omega})$ and is a classical solution of \cref{approximated4}.\\ For a solution of \cref{approximated5}, one does not need this bootstrap argument as $\frac{\lambda}{(t+\frac{1}{k})^\gamma}\leq \lambda k^\gamma$, for $t\geq 0$ and hence $w_{k,\lambda}\in W^{2,q}(\Omega)$ for all $1<q<\infty$, so is H\"older and [\citealp{FaberKrahn}, Theorem 2.8] can be applied directly.
\smallskip\\
    If we take weak solutions belonging to $C(\overline{\Omega})$ of \cref{approximated4,approximated5}, then we can skip the $W^{2,q}$ estimate of \cite{valdinoci}. Indeed, in this case $g(u)$ is bounded (with $f_k(u)$ is always bounded) and one can apply [\citealp{MG}, Theorem 4] to get $u,w_{k,\lambda}\in C^{0,\alpha}(\mathbb{R}^n)$ for all $\alpha\in (0,1)$ and in particular $u,w_{k,\lambda}\in C^{0,\alpha}(\overline{\Omega})$. As $f_k$ and $g$ are locally Lipschitz, so this gives $\lambda f_k(u)+g(u)\in C^{0,\alpha}(\overline{\Omega})$ and we can directly use [\citealp{FaberKrahn}, Theorem 2.8].
\end{remark}
\begin{remark}{\label{subsuper}}
We denote by $C^m_0(\overline{\Omega})$ to be the space of $m$ times differentiable functions vanishing in $\mathbb{R}^n\backslash\Omega$. Let $w_{k,\lambda}\neq v\in C^2_0(\overline{\Omega})$ be a supersolution of \cref{approximated5}. Then choosing $(w_{k,\lambda}-v)^+$ as a test function, we get 
\begin{equation*}
    \begin{array}{l}
\int_\Omega|\nabla (w_{k,\lambda}-v)^+|^2dx+\int_{\mathbb{R}^n}\int_{\mathbb{R}^n}\frac{((w_{k,\lambda}-v)(x)-(w_{k,\lambda}-v)(y))((w_{k,\lambda}-v)^+(x)-(w_{k,\lambda}-v)^+(y))}{|x-y|^{n+2s}}dxdy\smallskip\\\leq \int_{\Omega}\left[\frac{\lambda}{(w_{k,\lambda}+\frac{1}{k})^\gamma}-\frac{\lambda}{(v+\frac{1}{k})^\gamma}\right](w_{k,\lambda}-v)^{+}dx.         
    \end{array}
\end{equation*}
Noting that the second integral on the left is nonnegative and the integral on the right is negative on the set $\{w_{k,\lambda}>v\}$, we get 
\begin{equation*}
    \int_\Omega|\nabla (w_{k,\lambda}-v)^+|^2dx=0.
\end{equation*}
This along with the continuity of $w_{k,\lambda}$ and $v$, we get $w_{k,\lambda}\leq v$ in $\Omega$. Now let $z=v-w_{k,\lambda}$, then if $M$ is the Lipschitz constant of $f_k$ in $[\underset{\bar{\Omega}}{\min} \, w_{k,\lambda}, \underset{\bar{\Omega}}{\max} \, v]$, we have 
\begin{equation*}
  -  \Delta z(x)+(-\Delta)^s z(x)+\lambda M z(x)\geq \lambda f_k(v(x))-\lambda f_k(z_{k,\lambda}(x))+\lambda M z(x)\geq 0 \quad \text{ pointwise in } \Omega. 
\end{equation*}
We now apply strong maximum principle [\citealp{Biagiregularitymaximum}, Theorem 1.3] (this theorem can be easily generalized for the operator $-\Delta +(-\Delta)^s+L$, where $L$ is a positive number), to get $v> w_{k,\lambda}$ in $\Omega$. One now can proceed as [\citealp{FaberKrahn}, Theorem 2.9] and use the classical Hopf Lemma for $-\Delta +L$, $L$ is positive, to conclude
\begin{equation*}
   \frac{\partial v}{\partial \eta}<  \frac{\partial w_{k,\lambda}} {\partial \eta} \text { on } \partial \Omega,
\end{equation*}
where $\eta$ is the outward unit normal to the boundary $\partial \Omega.$ The same conclusion holds for a subsolution. In particular $0$ is a subsolution to \cref{approximated5}, and any solution to \cref{approximated4} satisfies 
\begin{equation*}
    \frac{\lambda}{(u+\frac{1}{k})^\gamma}+u^r\geq \frac{\lambda}{(u+\frac{1}{k})^\gamma},
\end{equation*}
and hence is a supersolution to \cref{approximated5}. Therefore it holds
\begin{equation*}
  0<w_{k,\lambda}<u \quad \text{ and }\quad \frac{\partial u}{\partial \eta}<  \frac{\partial w_{k,\lambda}} {\partial \eta}<0 \text { on } \partial \Omega.
\end{equation*}
\end{remark}
We now prove the uniform apriori estimate regarding solutions of \cref{approximated4}.
\begin{lemma}{\label{uniformboundedness}}
    For every $\lambda>0$, there exists $M>0$, independent of $k$, such that every positive $u\in C(\mathbb{R}^n)$, satisfying \cref{approximated4}, also satisfies $\|u\|_{L^\infty(\Omega)}< M$.
\end{lemma}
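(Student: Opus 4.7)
The plan is a blow-up (rescaling) argument in the spirit of Gidas--Spruck \cite{GidasSpruck}, reducing the problem to a Liouville-type statement on $\mathbb{R}^n$, with strict convexity of $\Omega$ invoked precisely to ensure that the blow-up points sit a uniform distance from $\partial\Omega$. Suppose by contradiction that there exist $k_j\to\infty$ and positive solutions $u_j$ of \cref{approximated4} (with $k=k_j$), which by \cref{contin} are classical $C^2$ solutions, such that $M_j:=\|u_j\|_{L^\infty(\Omega)}\to\infty$; let $x_j\in\Omega$ satisfy $u_j(x_j)=M_j$. The first step is to produce $\delta_0>0$, independent of $j$, with $\operatorname{dist}(x_j,\partial\Omega)\ge\delta_0$. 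For this I would run a moving-plane argument for $-\Delta+(-\Delta)^s$, relying on the strong maximum principle of \cite{Biagiregularitymaximum}, the boundary-point Hopf-type lemma of \cite{FaberKrahn}, and the nonlocal symmetry machinery of \cite{Biagisymmetry}; strict convexity furnishes at every boundary point and every inward normal direction a reflection cap whose width $\delta_0=\delta_0(\Omega)>0$ is bounded below uniformly. In each such cap $u_j$ is monotone along the inward normal, so that
\[
\sup_{\{\operatorname{dist}(\cdot,\partial\Omega)<\delta_0\}} u_j \;\le\; \sup_{\{\operatorname{dist}(\cdot,\partial\Omega)\ge\delta_0\}} u_j,
\]
forcing the maximum point $x_j$ to lie at distance at least $\delta_0$ from $\partial\Omega$.

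For the blow-up, set $\mu_j:=M_j^{-(r-1)/2}\to 0$ and
\[
v_j(y):=M_j^{-1}\,u_j(x_j+\mu_j y),\qquad y\in\Omega_j:=\mu_j^{-1}(\Omega-x_j),
\]
extended by $0$ outside $\Omega_j$. Then $0\le v_j\le 1$, $v_j(0)=1$, and a direct scaling computation converts \cref{approximated4} into
\[
-\Delta v_j(y)+\mu_j^{2-2s}(-\Delta)^s v_j(y) \;=\; \mu_j^2 M_j^{-1}\bigl[\lambda f_{k_j}(M_j v_j(y))+(M_j v_j(y))^r\bigr] \;=\; v_j(y)^r+\rho_j(y),
\]
with a residual $\rho_j$ collecting the singular $f_{k_j}$-contribution. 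Using \cref{property2}, which says $f_k(t)/t^r\to 0$ as $t\to\infty$ uniformly in $k$, one checks that $\rho_j\to 0$ locally uniformly. Since $\operatorname{dist}(x_j,\partial\Omega)\ge\delta_0$, the rescaled domains satisfy $\Omega_j\supset B_{\delta_0/\mu_j}(0)$ and therefore exhaust $\mathbb{R}^n$. The uniform bound $\|v_j\|_{L^\infty(\mathbb{R}^n)}\le 1$, combined with the fact that $s<1/2$ makes the singular integral defining $(-\Delta)^s v_j$ bounded pointwise on compact sets (the near part of the integrand is controlled by interior $C^{1,\alpha}$ regularity and the exponent $2s<1$, and the far part by the global $L^\infty$ bound), allows interior Schauder estimates for $-\Delta$ to yield uniform $C^{2,\alpha}_{\rm loc}$ bounds on $v_j$. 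Along a subsequence $v_j\to v$ in $C^2_{\rm loc}(\mathbb{R}^n)$; since $\mu_j^{2-2s}\to 0$, the fractional term drops out and the nonnegative limit $v$ is a classical solution of
\[
-\Delta v=v^r \text{ in } \mathbb{R}^n, \qquad v(0)=1,\qquad 0\le v\le 1.
\]

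This is impossible by the classical Liouville theorem of Gidas--Spruck \cite{GidasSpruck} for the subcritical exponent $1<r<(n+2)/(n-2)$, which forces $v\equiv 0$ and contradicts $v(0)=1$; the lemma follows. The main obstacle is the uniform boundary separation step: adapting the moving-plane method to the mixed operator is delicate because reflection does not kill the nonlocal contribution, and the uniformity of the reflection cap width is exactly what makes strict convexity of $\Omega$ indispensable—it is the geometric substitute for the Kelvin transform used in the purely local treatment of \cite{Arcoyamultigreaterthan}, which is unavailable here.
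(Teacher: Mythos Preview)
Your overall strategy---moving plane to push maximum points uniformly into the interior, then blow-up and the Gidas--Spruck Liouville theorem---is the same as the paper's. The moving-plane step is essentially the paper's Step~1, and the rescaling and limit identification parallel the paper's Step~2.

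The genuine gap is your control of the residual $\rho_j(y)=\lambda M_j^{-r}f_{k_j}\!\bigl(M_j v_j(y)\bigr)=\lambda M_j^{-r}\bigl(M_jv_j(y)+\tfrac{1}{k_j}\bigr)^{-\gamma}$. You invoke \cref{property2} ($f_k(t)/t^r\to 0$ uniformly in $k$ as $t\to\infty$), but this helps only at points where $M_j v_j(y)\to\infty$. On a fixed ball $B_R(0)$ you have no a priori positive lower bound on $v_j$, so $u_j(x_j+\mu_j y)=M_j v_j(y)$ may stay bounded (or small) at some points; there $f_{k_j}\!\bigl(M_jv_j(y)\bigr)$ can be as large as $k_j^{\gamma}$, and since no relation between $k_j$ and $M_j$ is assumed you can conclude neither $\rho_j\to 0$ nor even that $\rho_j$ is bounded. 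This also undermines the compactness step, which needs the right-hand side uniformly bounded on compacta before any Schauder or H\"older estimate can be applied.

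The paper supplies precisely the missing ingredient: by \cref{subsuper} every solution of \cref{approximated4} is a supersolution of the purely singular problem \cref{approximated5}, hence $u_k\ge w_{k,\lambda}$, and by \cref{greater} one has $w_{k,\lambda}\ge c(\omega)>0$ on every $\omega\subset\subset\Omega$, uniformly in $k$. Since your boundary-separation step already forces $x_j+\mu_j y\in\bar\omega_{0,d}\subset\Omega$ for $y\in B_{R+1}(0)$, this gives $u_j(x_j+\mu_j y)\ge c(\omega_{0,d})>0$, hence $\lambda f_{k_j}(u_j)\le \lambda\, c(\omega_{0,d})^{-\gamma}$ uniformly and $\rho_j\le \lambda M_j^{-r}c(\omega_{0,d})^{-\gamma}\to 0$. (A minor secondary point: the paper obtains compactness via the mixed-operator $C^{1,\beta}$ estimates of \cite{MG} applied directly to $v_k$, rather than Schauder for $-\Delta$; this avoids the bootstrap you would otherwise need to put $(-\Delta)^s v_j$ into a H\"older class before invoking Schauder.)
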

\begin{remark}
    As our domain is strictly convex with smooth boundary, we will use the classical moving plane argument. We take $\zeta$ to be a unit vector in $\mathbb{R}^n$ and let $T_\alpha$ denote the hyperplane $\zeta \cdot x=\alpha$. For $\alpha=\tilde{\alpha}$ large, $T$ is disjoint from $\overline{\Omega}$. Now we start moving the plane continuously toward $\Omega$, preserving the same normal, i.e., decrease $\alpha$, until it begins to intersect $\overline{\Omega}$. From that moment on, at every stage the plane $T_\alpha$ will cut off from $\Omega$ an open cap $\Sigma(\alpha)$, the part of $\Omega$ which is on the same side of $T_\alpha$ as $T_{\tilde{\alpha}}$. Let $\Sigma^{\prime}(\alpha)$ be the reflection of $\Sigma(\alpha)$ in the plane $T_\alpha$. Hence at the beginning, $\Sigma^{\prime}(\alpha)$ will lie in $\Omega$ and as $\alpha$ decreases, the reflected cap $\Sigma^{\prime}(\alpha)$ will remain in $\Omega$, at least until one of the following occurs:\\
(i) $\Sigma^{\prime}(\alpha)$ becomes internally tangent to $\partial \Omega$ at some point $P$ not on $T_\alpha$ or\\
(ii) $T_\alpha$ reaches a position where it is orthogonal to the boundary of $\Omega$ at some point $Q$.
\\Following the notations of \cite{GidasNiNirenberg}, we denote by $T_{\alpha_1}: \zeta \cdot x=\alpha_1$ the plane $T_\alpha$ when it first reaches one of these positions and we call $\Sigma(\alpha_1)=\Sigma_\zeta
$ the maximal cap associated with $\zeta$. Note that its reflection $\Sigma_\zeta^{\prime}$ in $T_{\alpha_1}$ lies in $\Omega$. By \cite{GidasNiNirenberg}, the existence of such maximal caps is always guaranteed.
\end{remark}
\begin{proof}
    Let $\lambda >0$ be fixed. We will prove the result in two steps. First we will show the existence of an open set $\omega_0\subset\subset\Omega$ such that 
    \begin{equation}{\label{step1movingplane}}
        u(x)\leq \underset{\bar{\omega}_0}{\max}\, u,\quad\forall x\in \Omega\backslash\omega_0;
    \end{equation}
    and in the second step, we will find a positive constant $M_{\omega_0}$ such that 
    \begin{equation}{\label{step2movingplane}}
         \underset{\bar{\omega}_0}{\max} \,u\leq M_{\omega_0}
    \end{equation}
    for every positive solution $u$ of \cref{approximated4}.\smallskip\\
   \textbf{Step $1$:} Fix $x\in \partial \Omega$. Let $\eta(x)$ be the outward unit normal vector at $x$ to $\partial \Omega$ and let $T_x$ be the tangent hyperplane at $x$ to $\overline{\Omega}$. As $\Omega$ is strictly convex with smooth boundary, therefore this hyperplane is a supporting hyperplane (in fact unique supporting hyperplane at $x$) and divides the space $\mathbb{R}^n$ into two disjoint half-spaces with $\Omega$ belonging to one of them. Further, $\overline\Omega$ intersects $T_x$ only at $x$. Hence we can consider another hyperplane $T$ which is parallel to $T_x %J_x(\bar{\Omega})
   $ and cuts off $\overline{\Omega}$ in such a way that if one reflects $\Sigma_x$ (the region of ${\Omega}$ which is between both hyperplanes) with respect to $T$, then $\Sigma^\prime_x$ (the reflection) is inside $\Omega$.
\smallskip\\By strict convexity of $\Omega$, we can choose $t_x$ and $\varepsilon_x$, two positive numbers such that if $y \in \partial \Omega \cap \bar{B}(x, \varepsilon_x)$ then, $y-t \eta(x) \in \Omega \cap \Sigma_x$ for all $t\in\left(0, t_x\right]$. In spirit of \cite{Arcoyamultigreaterthan}, we define the sets
\begin{equation*}
    \begin{array}{c}
         V_x=\{y-s\eta (x): y \in \partial\Omega \cap B(x, \varepsilon_x), 0 \leq t<t_x\} \subset \overline{\Omega} \text{ and }\\
{W}_x=\{y-t_x \eta(x): y \in \partial \Omega \cap \bar{B}(x, \varepsilon_x)\} \subset \Omega \cap \Sigma_x .
    \end{array}
\end{equation*}
One can easily check that $V_x$ is open in $\overline{\Omega}$ (in subspace topology) and ${W}_x$ is compact in $\Omega$. Consequently $\delta_x=\operatorname{dist}\left(W_x, \partial \Omega\right)>0$. Now by the compactness of $\partial \Omega$, one can extract a finite subcover from the covering $\cup_{x \in \partial \Omega} V_x$, that is, there exists $x_1, x_2, \ldots, x_k\in \partial \Omega$ such that $\partial \Omega \subset \cup_{i=1}^k V_i$, where we denote $V_i=V_{x_i}$ with $ t_i=t_{x_i}, \varepsilon_i=\varepsilon_{x_i}$, $W_i=W_{x_i}$, $\delta_i=\delta_{x_i}$ and $\Sigma_i=\Sigma_{x_i}$. \\As $V_i$ is open in $\overline{\Omega}$ containing $\partial \Omega \cap B(x_i, \varepsilon_i)$ (set $t=0$), the set $V:=\cup_{i=1}
^k V_i$ is an open neighborhood of $\partial \Omega$ in $\overline{\Omega}$. Therefore, $\operatorname{dist}(\partial \Omega, \overline{\Omega} \backslash V)=d>0$. Indeed if there exists $\{v_k\} \subset \partial \Omega$ and $\{z_k\} \subset \overline{\Omega} \backslash V$ such that $\mathrm{d}\left(v_k, z_k\right) \rightarrow 0$, then up to a subsequence $v_k \rightarrow v \in \partial \Omega$ (Bolzano-Weirestrass) and $z_k \rightarrow v \in \partial \Omega$ contradicting that $V$ is a neighborhood of $\partial \Omega$. Taking $r=\min \{d, \delta_1, \ldots, \delta_k\}>0$ and we consider the open set $\omega_0:=\{y \in \overline{\Omega}: \operatorname{dist}(y, \partial \Omega)>\frac{r}{2}\}$. Clearly $\Omega \backslash \omega_0 \subset V$ and $W_i \subset \omega_0$ for all $i=1, \ldots, k$.
\smallskip\\Now let $u$ is a positive solution of \cref{approximated4}. Take $x \in \Omega \backslash \omega_0 \subset V$. So $\exists \,i \in\{1, \ldots, k\}$ such that $x \in V_i$, i.e. there exists $t \in(0, t_i)$ and $y_i \in \partial{\Omega} \cap B(x_i, \varepsilon_i)$ such that $x=y_i-t \eta(x_i)$.\smallskip\\
Since $\lambda f_k+g$ is locally Lipschitz, at this point, one can use [\citealp{Biagisymmetry}, Theorem 1.1]. Indeed, one can take the unit vector $\eta(x_i)$ instead of $(1,0,\ldots, 0)$ and end up with the same result, but only up to the maximal cap, which is easy to deduce for a convex nonsymmetric domain if one follows the proof of \cite{Biagisymmetry}. In particular if one takes $\underset{x\in\Omega}{\operatorname{inf}}\,x_1=a$, $\underset{x\in\Omega}{\operatorname{sup}}\,x_1=b$, $\{x\in \mathbb{R}^n; x_1=\lambda_1\}$ to be the hyperplane corresponding to the maximal cap in $-x_1$ direction and $\{x\in \mathbb{R}^n; x_1=\lambda_2\}$ to be the hyperplane corresponding to the maximal cap in $x_1$ direction, then Lemma 2.8 of \cite{Biagisymmetry} can be deduced for $\lambda\in (a,\lambda_1)\cup(\lambda_2,b)$. Further, choosing $l=b+\lambda-\lambda_1$ and $\eta=\lambda_2+\frac{\lambda-\lambda_1}{n_0}$, for some $n_0\geq 2$ large enough, such that $\eta>\lambda_2$, Lemma 2.9 can also be proved. Then one can follow the proof of Theorem 1.1 and show that $\overline{\lambda}=\lambda_1$, i.e. obtaining the monotonicity up to maximal cap. Thus we deduce
\begin{equation*}
    u(x)=u(y_i-t \eta(x_i))\leq u(y_i-t _i\eta(x_i)),
\end{equation*}
with $y_i-t_i \eta(x_i)\in W_i\subset\omega_0$ which gives \cref{step1movingplane}.
\smallskip\\\textbf{Step $2$:} Once \cref{step1movingplane} is proved, we now show \cref{step2movingplane} in spirit of [\citealp{GidasSpruck}, Theorem 1.1]. We can treat the nonlocal part as a perturbation term and follow the blow up analysis. \smallskip\\We argue by contradiction. If possible, let there exist a sequence $\left\{u_k\right\}$ of positive solutions of \cref{approximated4}$_{k,\lambda}$ and a sequence of points $P_k \in\bar\omega_0\subset \Omega$ (as continuous functions on compact sets achieve maximum) such that
\begin{equation*}
    M_k=u_k(P_k)=\max \{u_k(x): x \in \bar{\omega}_0\} \rightarrow+\infty \text { as } k \rightarrow+\infty .
\end{equation*}
By Bolzano-Weirestrass theorem, up to a subsequence, we can assume that $P_k \rightarrow P \in \bar{\omega}_0$ as $k\rightarrow+\infty$. Let $2 d=\operatorname{dist}(\omega_0,\partial\Omega)$ and let $\omega_{0,d}$ be the set: $\omega_{0,d}=\{x \in \Omega: \operatorname{dist}(x, \omega_0) \leq d\}$. We take $\{\mu_k\}$ to be a sequence of positive numbers such that $\mu_k^{\frac{2}{r-1}} M_k=1$. Clearly $M_k \rightarrow+\infty$ implies $\mu_k \rightarrow 0$ as $k \rightarrow+\infty$. Now choose $\tilde{R}>0$ arbitrary and fix after choice. By convergence of $\{\mu_k\}$ we can select $k_0$ such that $B_{\tilde{R}+1}(0) \subset B_{\frac{d}{\mu_k}}(0)$ for all $k\geq k_0$. We now define the scaled function
\begin{equation*}
v_k(y)=\mu_k^{\frac{2}{r-1}} u_k(P_k+\mu_k y), \quad \forall y \in B_{\frac{d}{\mu_k}}(0).
\end{equation*}
By boundary estimate in Step 1, $v$ satisfies
\begin{equation*}
    \sup \{v_k(y): y \in B_{\frac{d}{\mu_k}}(0)\}=v_k(0)=1.
\end{equation*}
Further calculation yields that
\begin{equation*}
    \begin{split}
        -\Delta v_k(y)+\mu_k^{2-2s}(-\Delta)^s v_k(y)&=\mu_k^{\frac{2 r}{r-1}}\left(\lambda f_k(u_k(P_k+\mu_k y))+(\mu_k^{\frac{-2}{r-1}} v_k(y))^r\right), \quad y \in B_{\tilde{R}+1}(0), \\
v_k(0)&=1 .
 \end{split}
\end{equation*}
Now by \cref{subsuper}, every solution of \cref{approximated4} is a supersolution of \cref{approximated5} and we have $u_k \geq w_{k, \lambda}$, therefore it holds
\begin{equation*}
    \lambda f_k(u_k(P_k+\mu_k y)) \leq \frac{\lambda}{(w_{k,\lambda}(P_k+\mu_k y))^\gamma} .
\end{equation*}
Since $P_k+\mu_k y \in \bar{\omega}_{0,d} \subset \Omega$ for all $y \in B_{\bar{R}+1}(0)$, by \cref{greater}, $\exists \,C(\omega_{0})>0$ such that
\begin{equation*}
  \mu_k^{\frac{2 r}{r-1}}\left(\lambda f_k(u_k(P_k+\mu_k y))+(\mu_k^{\frac{-2}{r-1}} v_k(y))^r\right)\leq C(\omega_0), \quad y \in B_{\tilde{R}+1}(0), \quad \forall k \geq k_0 .
\end{equation*}
Therefore each $v_k$ satisfies 
\begin{equation*}
    \begin{split}
        -\Delta v_k+\mu_k^{2-2s}(-\Delta)^s v_k=h_k, \quad y \in B_{\tilde{R}+1}(0), \quad\text { and }\quad
v_k(0)=1 ,
 \end{split}
\end{equation*}
with $\|h_k\|_{L
^\infty(B_{\tilde{R}+1}(0))}\leq C$ for all $k$, where $C$ is independent of $k$. \smallskip\\Without loss of generality, we can assume $\mu_k<1$, and at this point, we can use the local H\"older bound estimates of \cite{MG}. In fact, it can be easily checked that the conditions $(1.5)-(1.7)$ of [\citealp{MG}, Theorem 3, Theorem 5] are satisfied and the boundary data $g_k(\cdot)=\mu_k^{\frac{2}{r-1}} u_k(P_k+\mu_k \cdot) $ satisfies the required condition. Further, a delicate observation of their proof indicates that the constants (which dominate the corresponding seminorms) are linearly dependent on $\|h_k\|_{L^n}$ (or $\|h_k\|_{L^d}, d>n$) and $\|v_k\|_{L^2}$. As $\|h_k\|_{L
^\infty}\leq C$ and $\|v_k\|_{L
^\infty}\leq 1$, therefore
we get $\{v_k\}$ is bounded in $C^{1, \beta}(B_{\tilde{R}}(0))$ for some $\beta\in (0,1)$. Now compact embedding of $C^{1, \beta}(B_{\tilde{R}}(0))$ in $C^{1, \alpha}(B_{\tilde{R}}(0))$ for $0<
\alpha<\beta$ implies there exists a subsequence of $\{v_{k}\}$ which converges in $C^{1, \alpha}\left(B_{\tilde{R}}(0)\right)$ to some $v$.
\\Now, we estimate by using uniform bound of H\"older seminorm of $v_k$ in $B_{\tilde{R}+1/2}(0)$ for every $\beta\in (0,1)$ [\citealp{MG}, Theorem 3]. As $s\in(0,1/2)$, we choose $\beta$ such that $\beta>2s$ and get
\begin{equation}{\label{unibddfrac}}
\begin{array}{c}|(-\Delta)^sv_k(x)|=\left|c_{n,s}\int_{\mathbb{R}^n}\frac{v_k(x)-v_k(y)}{|x-y|^{n+2s}} dy\right|\leq c_{n,s}\int_{ B_{1/2}(x)}\frac{|v_k(x)-v_k(y)|}{|x-y|^{n+2s}} dy+c_{n,s} \int_{\mathbb{R}^n\backslash B_{1/2}(x)}\frac{|v_k(x)-v_k(y)|}{|x-y|^{n+2s}} dy\smallskip\\\quad\quad\quad\quad\quad\quad\quad\quad\quad\quad\quad\quad\leq c_{n,s}\int_{ B_{1/2}(x)}\frac{[v_k]_{0,\beta,B_{\tilde{R}+1/2}(0)}}{|x-y|^{n+2s-\beta}} dy+ 2c_{n,s}\|v_k\|_{\infty}\int_{\mathbb{R}^n\backslash B_{1/2}(x)}\frac{1}{|x-y|^{n+2s}} dy \leq C,
\end{array}
\end{equation}
for all $x\in B_{\tilde{R}}(0)$ and for all $k$. Further, $t\to t^r$ is locally Lipschitz for $t> 0$ so for all $k$, as $\|v_k\|_{\infty}\leq 1$ it holds
\begin{equation}{\label{uniconv}}
    |(v_k(x))^r-(v(x))^r|\leq M|v_k(x)-v(x)|.
\end{equation}
Also 
\begin{equation}{\label{uniconv2}}
    \lambda f_k(u_k(P_k+\mu_k y))\leq C(\omega_0).
\end{equation}
As $v_k$ satisfies 
\begin{equation}{\label{blowup}}
    \begin{split}
        -\Delta v_k(y)+\mu_k^{2-2s}(-\Delta)^s v_k(y)&=\mu_k^{\frac{2 r}{r-1}}\left(\lambda f_k(u_k(P_k+\mu_k y))+(\mu_k^{\frac{-2}{r-1}} v_k(y))^r\right), \quad y \in B_{\tilde{R}}(0), 
\quad v_k(0)=1 ;
 \end{split}
\end{equation}
and $v_k\rightarrow v$ in $C^{1, \alpha}(B_{\tilde{R}}(0))$, so $v_k\to v$ uniformly and by \cref{unibddfrac}, \cref{uniconv,uniconv2}, we get taking limit in \cref{blowup},
\begin{equation*}
    \begin{split}
        -\Delta v= v^r \text{ in } B_{\tilde{R}}(0), \quad v(0)=1.
 \end{split}
\end{equation*}
Now, taking larger and larger balls, we obtain a Cantor diagonal subsequence, which converges to
$v \in C^{1,\alpha}(\mathbb{R}^n )$ on all compact subsets of $\mathbb{R}^n$ and $v$ satisfies
\begin{equation*}
    \begin{split}
        -\Delta v= v^r \text{ in } \mathbb{R}^n, \quad v(0)=1,
 \end{split}
\end{equation*}
with $1<r<\frac{n+2}{n-2}$. By [\citealp{GidasSpruckBlowup}, Theorem 1.1] this a contradiction proving Step 2 and thus the lemma concludes.
\end{proof}
Before proceeding, we prove [\citealp{gamez}, Theorem 2.2] for our context. Let $C^m_0(\overline{\Omega})$ be the space of $m$ times differentiable functions vanishing in $\mathbb{R}^n\backslash\Omega$. 
Let us define the set
\begin{equation*}
    \mathbb{P}=\left\{u \in C_0^{1}(\overline{\Omega}): u(x) \geq 0 \text { in } \overline{\Omega}\right\} .
\end{equation*}
Clearly, the interior of $ \mathbb{P}$ is
\begin{equation*}
\mathbb{P}^{\sim}=\left\{u \in C^{1}(\overline{\Omega}): u>0 \text { in } \Omega \text { and } \frac{\partial u}{\partial \eta}(x)<0 \text { for all } x \in \partial \Omega\right\},
\end{equation*}
 where $\eta$ is the unit outward normal to $\partial \Omega$.
\begin{lemma}{\label{prelim}}
    Suppose $u$ and $\bar{u}$ are the solution and super-solution to \cref{approximated4} in $C_0^{2}(\overline{\Omega})$ and $\bar u-u\in \mathbb{P}$. If $u \neq \bar{u}$, then $\bar{u}-u$ is not in $\partial \mathbb{P}$, where $\partial \mathbb{P}$ is the boundary of $\mathbb{P}$.
\end{lemma}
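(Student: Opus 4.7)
The plan is to show that the assumption $\bar u - u \in \partial \mathbb{P}$ together with $u \neq \bar u$ leads to a contradiction, which amounts to establishing that the nonnegative function $w := \bar u - u$ actually lies in the interior $\mathbb{P}^{\sim}$, i.e., $w > 0$ in $\Omega$ and $\partial w / \partial \eta < 0$ on $\partial \Omega$. The argument mirrors the one already carried out in the unperturbed setting in \cref{subsuper}, now adapted to the perturbed equation \cref{approximated4}.

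First, since $\bar u$ is a supersolution and $u$ is a solution of \cref{approximated4}, subtracting pointwise gives
\begin{equation*}
-\Delta w + (-\Delta)^s w \;\geq\; \lambda\bigl(f_k(\bar u) - f_k(u)\bigr) + \bigl(g(\bar u) - g(u)\bigr) \quad \text{in } \Omega.
\end{equation*}
Because $u, \bar u \in C^2_0(\overline{\Omega})$ are bounded, they take values in a compact interval $[0,T]$ with $T := \max\{\|u\|_\infty,\|\bar u\|_\infty\}$, and on $[0,T]$ the functions $f_k$ and $g$ are Lipschitz (with $f_k$ bounded by $k^\gamma$ thanks to the parameter $1/k$). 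Denoting by $L>0$ a Lipschitz constant for $\lambda f_k + g$ on $[0,T]$, and using $\bar u \geq u$, one obtains
\begin{equation*}
-\Delta w + (-\Delta)^s w + L\, w \;\geq\; 0 \quad \text{pointwise in } \Omega.
\end{equation*}

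Next, since $w \in C^2_0(\overline{\Omega})$, $w \geq 0$ and $w \not\equiv 0$, the strong maximum principle for the mixed operator $-\Delta + (-\Delta)^s + L$ (the generalisation of \cite{Biagiregularitymaximum}, Theorem 1.3, to the operator with a nonnegative zero-order term, noted already in \cref{subsuper}) yields $w > 0$ throughout $\Omega$. Then, exactly as in \cref{subsuper} and \cite{FaberKrahn}, Theorem 2.9, the classical Hopf lemma applied to the local part $-\Delta + L$ (the nonlocal contribution having the correct sign at any boundary minimum of $w$) gives $\partial w / \partial \eta < 0$ on $\partial \Omega$. Consequently $w \in \mathbb{P}^{\sim}$, which contradicts $w \in \partial \mathbb{P}$, proving the lemma.

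The main technical point is verifying that the strong maximum principle and Hopf lemma remain valid after the addition of the linear zero-order term $Lw$ and in the presence of the nonlocal operator $(-\Delta)^s$; this is precisely the reason for splitting the estimate as above, since the Lipschitz nature of $\lambda f_k + g$ on the range of the $C^2_0(\overline{\Omega})$ functions $u, \bar u$ is what allows us to absorb the reaction term into a good sign condition. Everything else reduces to applying the cited mixed-operator maximum principle and the classical Hopf boundary point lemma, both of which are available in the current setting.
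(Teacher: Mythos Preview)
Your proof is correct and follows essentially the same route as the paper: contradiction, set $w=\bar u-u$, use the Lipschitz bound of $\lambda f_k+g$ on the range of $u,\bar u$ to get $-\Delta w+(-\Delta)^s w+Lw\ge 0$, then invoke the strong maximum principle and Hopf lemma exactly as in \cref{subsuper} to force $w\in\mathbb{P}^{\sim}$. The only cosmetic difference is that the paper takes the Lipschitz interval as $[\min_{\overline\Omega}u,\max_{\overline\Omega}\bar u]$ while you use $[0,T]$; both work equally well.
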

\begin{proof} Suppose by contradiction that $\bar{u}-u \in \partial \mathbb{P}$. Hence, we have $\bar{u}(x) \geq u(x)$ for all  $x\in\overline\Omega$. Setting $z=\bar u-u$, if $M$ is the Lipschitz constant of $\lambda f_k+g$ in $[\underset{\bar{\Omega}}{\min} \, u, \underset{\bar{\Omega}}{\max} \, \bar u]$, we have 
\begin{equation*}
  -  \Delta z(x)+(-\Delta)^s z(x)+ M z(x)\geq \lambda f_k(\bar u(x))+g(\bar u(x))-\lambda f_k(u(x))-g(u(x))+ M z(x)\geq 0, \quad \text{ pointwise in } \Omega. 
\end{equation*}
Proceeding now as \cref{subsuper}, we arrive at $\bar{u}-u \in \mathbb{P}^{\sim}$, which contradicts our assumption as $\mathbb{P}^{\sim} \cap \partial \mathbb{P}=\emptyset$.
\end{proof}
\begin{lemma}{\label{important}}
Let $I \subset \mathbb{R}$ be an interval and $\Sigma \subset I \times C_0^{2}(\overline{\Omega})$ be a connected set of solutions to \cref{approximated4}. Consider a continuous map $U:I \rightarrow C_0^{2}(\overline{\Omega})$ such that $U(\lambda)$ is a super-solution of \cref{approximated4}$_\lambda$ for every $\lambda \in I$, but not a solution. If $u_0 \leq U(\lambda_0)$ in $\Omega$ but $u_0 \neq U(\lambda_0)$ for some $(\lambda_0, u_0) \in \Sigma$ then $u<U(\lambda)$ in $\Omega$ for all $(\lambda, u) \in \Sigma$.
\end{lemma}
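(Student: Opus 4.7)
The plan is to carry out a classical connectedness argument on $\Sigma$, using Lemma \ref{prelim} as the engine for the strong comparison. Concretely, define
\begin{equation*}
A = \{(\lambda, u) \in \Sigma : U(\lambda) - u \in \mathbb{P}^{\sim}\},
\end{equation*}
and show $A = \Sigma$ by proving that $A$ is nonempty, open, and closed in $\Sigma$. Since $\mathbb{P}^{\sim}$ consists of functions that are strictly positive in $\Omega$ (with negative outer normal derivative on $\partial\Omega$), the identity $A=\Sigma$ will give exactly the conclusion $u<U(\lambda)$ in $\Omega$ for every $(\lambda,u)\in\Sigma$.

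First, I would check that $A\neq\emptyset$. By hypothesis there is some $(\lambda_0,u_0)\in\Sigma$ with $u_0\le U(\lambda_0)$ and $u_0\neq U(\lambda_0)$. Since $u_0$ solves \cref{approximated4}$_{\lambda_0}$ and $U(\lambda_0)$ is a (proper) supersolution, Lemma \ref{prelim} applies and forces $U(\lambda_0)-u_0\in\mathbb{P}^{\sim}$, so $(\lambda_0,u_0)\in A$.

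Next, openness of $A$ in $\Sigma$ is essentially automatic: $\mathbb{P}^{\sim}$ is open in $C^{1}_0(\overline{\Omega})$, the map $(\lambda,u)\mapsto U(\lambda)-u$ from $I\times C^2_0(\overline{\Omega})$ into $C^1_0(\overline{\Omega})$ is continuous by continuity of $U$ (together with the continuous embedding $C^2_0\hookrightarrow C^1_0$), and the preimage of the open set $\mathbb{P}^{\sim}$ intersected with $\Sigma$ is exactly $A$.

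The main obstacle is closedness in $\Sigma$. Take a sequence $(\lambda_n,u_n)\in A$ with $(\lambda_n,u_n)\to(\lambda,u)$ in $I\times C^2_0(\overline{\Omega})$, hence also in $C^1_0$. Since $\mathbb{P}$ is closed and $U(\lambda_n)-u_n\in\mathbb{P}^{\sim}\subset\mathbb{P}$, we obtain in the limit $U(\lambda)-u\in\mathbb{P}$, i.e.\ $u\le U(\lambda)$ on $\overline{\Omega}$. The crux is to rule out $u=U(\lambda)$: if this equality held, then $U(\lambda)$ would coincide with $u$, which is a solution of \cref{approximated4}$_\lambda$ (because $(\lambda,u)\in\Sigma$), contradicting the hypothesis that $U(\lambda)$ is a supersolution but never a solution. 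Hence $u\neq U(\lambda)$, and a second application of Lemma \ref{prelim} upgrades $u\le U(\lambda)$ to $U(\lambda)-u\in\mathbb{P}^{\sim}$, giving $(\lambda,u)\in A$. With $A$ nonempty, open and closed in the connected set $\Sigma$, one concludes $A=\Sigma$, which is precisely the claim.
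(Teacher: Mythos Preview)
Your proof is correct and is essentially the same connectedness argument as the paper's. The paper phrases it slightly differently---mapping $\Sigma$ into $C_0^1(\overline{\Omega})$ via $T(\lambda,u)=U(\lambda)-u$, observing that Lemma~\ref{prelim} (together with the fact that $U(\lambda)$ is never a solution, so $U(\lambda)\neq u$) forces $T(\Sigma)\cap\partial\mathbb{P}=\emptyset$, and then using connectedness of $T(\Sigma)$ to conclude it lies entirely in $\mathbb{P}^{\sim}$---but your clopen decomposition of $\Sigma$ unpacks exactly the same mechanism.
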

\begin{proof} Consider the continuous map
$
T: I \times C_0^{1}(\overline{\Omega}) \rightarrow C_0^{1}(\overline{\Omega}) \text { given by } T(\lambda, u)=U(\lambda)-u \text {. }
$
By continuity of $T$ it holds $T(\Sigma)$ is connected in $C_0^{2}(\overline{\Omega})$. Now by \cref{prelim}, $ T(\Sigma)$ completely lies in $\mathbb{P}^{\sim}$ or completely outside $\mathbb{P}$. Since $T(\lambda_0,u_0)\in \mathbb{P}^\sim$, we conclude $T(\Sigma) \subset \mathbb{P}^{\sim}$ and, therefore, $u<U(\lambda)$ for all $(\lambda, u) \in \Sigma$.
\end{proof}
We now show the inverse of $-\Delta +(-\Delta)^s$ is a compact operator. For convenience, we write $\mathcal{L}=-\Delta+(-\Delta)^s$.
As for any $w\in L^2(\Omega)$, the Dirichlet problem
\begin{equation*}
\mathcal{L} u= w \text{ in }\Omega, \quad u=0 \text{ in } \mathbb{R}^n\backslash\Omega;
\end{equation*}
has a unique weak solution (see \cite{Biagiregularitymaximum,FaberKrahn}) satisfying $||u||_{W^{1,2}_0(\Omega)}\leq c||w||_{L^2(\Omega)}$, where $c>0$ is a constant independent of $w$; and by regularity up to the boundary we have (see for example \cite{FaberKrahn,MG,valdinoci}) $u\in C(\overline\Omega)$ for all $w\in C(\overline\Omega)$, therefore the map $K=\mathcal{L}^{-1}$ is well defined from $C(\overline\Omega)$ to $C(\overline\Omega)$.
\begin{lemma}{\label{compactopt}}
The operator $\mathcal{L}^{-1}$ i.e. $K:C(\overline\Omega)\to C(\overline\Omega)$ is compact.
\end{lemma}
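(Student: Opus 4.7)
\textbf{Proof proposal for \cref{compactopt}.}

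The plan is to show that $K$ sends bounded subsets of $C(\overline{\Omega})$ into families of functions that are equibounded and equicontinuous on $\overline{\Omega}$, and then to invoke the Arzelà--Ascoli theorem. First I would pick an arbitrary bounded sequence $\{w_n\} \subset C(\overline{\Omega})$ with $\|w_n\|_{L^\infty(\Omega)} \leq M$ for all $n$, and set $u_n := Kw_n \in W^{1,2}_0(\Omega) \cap C(\overline{\Omega})$, so that $\mathcal{L} u_n = w_n$ in $\Omega$ and $u_n \equiv 0$ in $\mathbb{R}^n \setminus \Omega$. The continuity of $K$ from $L^2(\Omega)$ to $W^{1,2}_0(\Omega)$ recalled just before the statement of the lemma yields
\begin{equation*}
\|u_n\|_{W^{1,2}_0(\Omega)} \leq c\|w_n\|_{L^2(\Omega)} \leq c|\Omega|^{1/2} M,
\end{equation*}
which is uniform in $n$ and, in particular, controls $\|u_n\|_{L^2(\Omega)}$.

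The central step is to derive a uniform Hölder estimate up to the boundary. Since $\{w_n\}$ is uniformly bounded in $L^\infty(\Omega)$, I would apply [\citealp{MG}, Theorem 4], exactly as already invoked in \cref{contin}: there exist $\alpha \in (0,1)$ and a constant $C = C(\alpha, n, s, \Omega) > 0$, independent of $n$, such that
\begin{equation*}
\|u_n\|_{C^{0,\alpha}(\mathbb{R}^n)} \leq C\bigl(\|u_n\|_{L^2(\Omega)} + \|w_n\|_{L^\infty(\Omega)}\bigr) \leq C' M.
\end{equation*}
In particular $\{u_n\}$ is uniformly bounded in $C^{0,\alpha}(\overline{\Omega})$, hence equibounded and equicontinuous on the compact set $\overline{\Omega}$.

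By the Arzelà--Ascoli theorem (equivalently, by the compact embedding $C^{0,\alpha}(\overline{\Omega}) \hookrightarrow C(\overline{\Omega})$), the sequence $\{u_n\} = \{Kw_n\}$ admits a subsequence converging uniformly on $\overline{\Omega}$. Since $\{w_n\}$ was an arbitrary bounded sequence in $C(\overline{\Omega})$, this shows that $K$ maps bounded sets to relatively compact sets in $C(\overline{\Omega})$, i.e.\ $K = \mathcal{L}^{-1}$ is compact. The only delicate point in this scheme is securing the uniform Hölder estimate up to $\partial\Omega$, but that is precisely the content of the regularity theory for the mixed operator used throughout the paper, so the main obstacle is essentially bookkeeping rather than mathematics.
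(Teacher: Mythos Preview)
Your argument is correct and reaches the conclusion, but it differs from the paper's proof in two respects. First, to control the size of $u_n = Kw_n$, you use the $W^{1,2}_0$ a priori estimate stated just before the lemma (hence only an $L^2$ bound on $u_n$), whereas the paper devotes a separate Step~1 to proving an $L^\infty$ bound $\|u_n\|_{L^\infty(\Omega)} \leq c\|w_n\|_{L^\infty(\Omega)}$ via a Stampacchia level-set argument with the test functions $(\operatorname{sgn}u)(|u|-k)^+$. Second, for compactness the paper then invokes the $W^{2,p}$ estimate of [\citealp{valdinoci}, Theorem~1.4], namely $\|u_n\|_{W^{2,p}(\Omega)} \leq C(\|u_n\|_{L^p} + \|w_n\|_{L^p})$ for all $1<p<\infty$, and uses the compact embedding $W^{2,p}(\Omega)\hookrightarrow C^{1,\alpha}(\overline\Omega)$ for $p>n$, rather than your direct $C^{0,\alpha}$ estimate from [\citealp{MG}, Theorem~4] followed by Arzel\`a--Ascoli. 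Your route is shorter and sidesteps the Stampacchia computation entirely; the paper's route yields a slightly stronger conclusion (compactness into $C^{1,\alpha}(\overline\Omega)$ rather than merely $C(\overline\Omega)$), which is not needed for the lemma itself but is consonant with the $C^2$ framework used afterwards. The one caveat you flag is genuine: you should check that the constant in [\citealp{MG}, Theorem~4] depends only on $\|u_n\|_{L^2}$ and $\|w_n\|_{L^\infty}$; the paper's own discussion in the proof of \cref{uniformboundedness}, where the constants from [\citealp{MG}, Theorems~3 and~5] are said to depend linearly on $\|v_k\|_{L^2}$ and $\|h_k\|_{L^n}$, supports this.
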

\begin{proof}
\textbf{Step $1$:} First we show for any $w\in L^\infty(\Omega)%C(\overline\Omega)
    $, the $L^\infty$ norm of $K(w)=u$ is linearly dominated by $||w||_{L^\infty(\Omega)}$. Following [\citealp{kinderlehrer2000introduction}, Theorem B.2], we define
  %  \begin{equation*}
$A(k):=\{x \in \Omega ;| u(x)| \geq k\} \quad \text { for any } k > 0 $. %\end{equation*}
 Choosing $\phi_k:=(\operatorname{sgn} u)(|u|-k)^{+}\in W^{1,2}_0(\Omega)$ as a test function, we get
\begin{equation*}
\int_{\Omega}\nabla u \cdot\nabla \phi_k\, d x+\int_{\mathbb{R}^n} \int_{\mathbb{R}^n} \frac{(u(x)-u(y))(\phi_k(x)-\phi_k(y))}{|x-y|^{n+2s}} d x dy=\int_{\Omega} w \phi_k\, d x .
\end{equation*}
It is easy to observe that the nonlocal integral is nonnegative
and hence the continuity of the mapping $W_0^{1, 2}(\Omega) \hookrightarrow L^l(\Omega)$ for some $2<l\leq 2^*$ gives
\begin{equation*}
    \begin{array}{rcl}
    \int_{\Omega}|\nabla \phi_k|^2 d x=\int_{\Omega}\nabla u \cdot \nabla \phi_k\, d x &\leq& \int_{\Omega} w \phi_k \,d x \leq\|w\|_{L^{\infty}(\Omega)} \int_{A(k)}\phi _k\,d x \\
&\leq& C_0\|w\|_{L^{\infty}(\Omega)}|A(k)|^{\frac{l-1}{l}}\left(\int_{\Omega}\left|\nabla \phi_k\right|^2d x\right)^{\frac{1}{2}},
    \end{array}
\end{equation*}
where $C_0$ is the Sobolev constant. Hence, we have
\begin{equation*}
    \int_{\Omega}|\nabla \phi_k|^2 d x \leq C\|w\|^2_{L^{\infty}(\Omega)}|A(k)|^{\frac{2(l-1)}{l}}.
\end{equation*}
Now choose $h>k$. Clearly $A(h) \subset A(k)$. Using this fact and the above inequality, we estimate as
\begin{equation*}
\begin{array}{rcl}(h-k)^2|A(h)|^{\frac{2}{l}}\leq\left(\int_{A(h)}(|u(x)|-k)^l d x\right)^{\frac{2}{l}} &\leq&\left(\int_{A(k)}(|u(x)|-k)^l d x\right)^{\frac{2}{l}} \smallskip\\
&\leq &C_0 \int_{\Omega}\left|\nabla \phi_k\right|^2 d x \leq  C\|w\|^2_{L^{\infty}(\Omega)}|A(k)|^{\frac{2(l-1)}{l}}.
\end{array}
\end{equation*}
 Therefore, we have
$
|A(h)| \leq C\frac{\|w\|^l_{L^{\infty}(\Omega)}}{(h-k)^l}|A(k)|^{l-1} ,
$ for all $h>k>0$.
Thus using [\citealp{kinderlehrer2000introduction}, Lemma B.1] we obtain
$
   |A(d )|=0$, where $d^l=c\|w\|^l_{L^{\infty}(\Omega)}$, for a constant $c$ depending on $l, \Omega, C_0$ only. Therefore $\|u\|_{L^\infty(\Omega)}\leq c\|w\|_{L^\infty(\Omega)}.$\smallskip\\
\textbf{Step $2$:} Now let $\{w_k\}$ be a bounded sequence in $C(\overline{\Omega}).$ By [\citealp{valdinoci}, Theorem 1.4], we then have for each $u_k=K(w_k)$,
\begin{equation*}
    \|u_k\|_{W^{2,p}(\Omega)}\leq C(n,s,p)(\|u_k\|_{L^p(\Omega)}+\|w_k\|_{L^p(\Omega)})\leq C(\|u_k\|_{L^\infty(\Omega)}+\|w_k\|_{L^\infty(\Omega)})\leq C\|w_k\|_{L^\infty(\Omega)}\leq C,
\end{equation*}
for all $1<p<\infty$ and some constant $C\equiv C(n,p,s,\Omega)$ independent of $k$. Note that we have used the result of Step $1$ in the second last inequality. Now by compact embedding of $W^{2,p}(\Omega)$ in $C^{1,\alpha}(\overline\Omega)$, for $p>n$ we get $\{u_k\}$ has a convergent subsequence in $C(\overline\Omega)$, concluding our proof.
\end{proof}
With these preliminary results, we now show \cref{approximated4}$_k$ has at least two distinct solutions for all $\lambda\in(0,\Lambda)$, $\Lambda$ to be determined later. We only restrict our case to $\gamma\geq 1$, as one can find multiplicity result for $\gamma<1$ in \cite{garaingeometric}. 
\begin{lemma}{\label{twosol}}
    Let $\gamma\geq 1$. Then there exists $\bar k\in\mathbb{N}$ and $\lambda >0$ such that for any
$k \geq \bar k$, problem \cref{approximated4} admits at least two distinct solutions $u_k, v_k \in W^{1,2}_0(\Omega)\cap L^\infty(\Omega)$, provided $0 < \lambda< \Lambda$.
\end{lemma}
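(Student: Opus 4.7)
I work in the Banach space $E = \{u \in C(\overline{\Omega}) : u \equiv 0 \text{ in } \mathbb{R}^n\setminus\Omega\}$ using Leray--Schauder degree. Set
\[
\mathcal{T}_\lambda(u) := K\bigl(\lambda f_k(u^+) + g(u^+)\bigr),
\]
with $K = \mathcal{L}^{-1}$ compact by \cref{compactopt}. Since $f_k$ is continuous and bounded by $k^\gamma$ on $[0,\infty)$ and $g$ is locally Lipschitz, $\mathcal{T}_\lambda : E \to E$ is compact; its fixed points are precisely the nonnegative weak solutions of \cref{approximated4}, and by \cref{subsuper} together with \cref{contin} every such nontrivial fixed point pointwise dominates $w_{k,\lambda}$ and is a classical strictly positive solution.

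\textbf{First solution.}
For small $\lambda$ I would build a strict supersolution above $w_{k,\lambda}$ and extract a minimal solution by monotone iteration. Let $\psi_\mu$ be the unique positive solution of $\mathcal{L}\psi_\mu = \mu$ in $\Omega$ with zero Dirichlet data; the $L^\infty$ estimate inside the proof of \cref{compactopt} gives $\|\psi_\mu\|_\infty \leq c\mu$. Choose $\mu$ small enough that $(c\mu)^r \leq \tfrac12\mu$, and then $\lambda \leq \mu/(2k^\gamma)$, so that
\[
\mathcal{L}\psi_\mu = \mu \geq \lambda k^\gamma + (c\mu)^r \geq \lambda f_k(\psi_\mu) + g(\psi_\mu) \quad \text{in } \Omega.
\]
Since $\mathcal{L}w_{k,\lambda} \leq \lambda k^\gamma < \mu$, comparison as in \cref{subsuper} gives $w_{k,\lambda} < \psi_\mu$ in $\Omega$ and $\partial_\eta\psi_\mu < \partial_\eta w_{k,\lambda}$ on $\partial\Omega$. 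Monotone iteration in $[w_{k,\lambda}, \psi_\mu]$, using the shifted inverse $(\mathcal{L}+L)^{-1}$ for $L$ large enough to render the truncated right-hand side nondecreasing in $u$, then produces a minimal classical solution $u_k$ of \cref{approximated4} lying in this order interval.

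\textbf{Degree computation and second solution.}
Let $M$ be the a priori bound of \cref{uniformboundedness} and fix $R > M$. The plan is to show
\[
\deg(I - \mathcal{T}_\lambda, \mathcal{O}, 0) = 1, \qquad \deg(I - \mathcal{T}_\lambda, B_R, 0) = 0,
\]
for a small open neighbourhood $\mathcal{O} \subset B_R$ of $u_k$ strictly contained in the order interval $\{w_{k,\lambda} < u < \psi_\mu\}$; excision then furnishes a second fixed point $v_k \in B_R\setminus\overline{\mathcal{O}}$. The local index at $u_k$ follows from an Amann-type truncation of $\mathcal{T}_\lambda$ onto the order interval, whose admissibility on $\partial\mathcal{O}$ is exactly what \cref{important} provides (with $U(\lambda) = \psi_\mu$, no connected branch of solutions emanating from $u_k$ can reach the strict supersolution $\psi_\mu$). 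For the global degree I would use the homotopy
\[
H_t(u) := K\bigl(\lambda f_k(u^+) + g(u^+) + te_1\bigr), \qquad t \in [0, T],
\]
where $e_1 > 0$ is a first eigenfunction of $\mathcal{L}$ on $\Omega$; the moving-plane and blow-up ingredients of \cref{uniformboundedness} extend uniformly in $t \in [0, T]$ (the added source rescales with the factor $\mu_k^{2r/(r-1)}\to 0$, leaving the Gidas--Spruck limit equation $-\Delta v = v^r$ intact), while for $t$ large, testing any $H_t$-fixed point against $e_1$ forces $\lambda_1\int u e_1 \geq t\int e_1^2$, contradicting the uniform $L^\infty$ bound. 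Hence $\deg(I - H_T, B_R, 0) = 0$, and homotopy invariance transports this back to $t = 0$.

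\textbf{Main obstacle.}
The delicate step is the local index $+1$ computation at $u_k$: admissibility of the truncation homotopy on $\partial\mathcal{O}$ along the perturbed operator must be rigorously verified, and this is precisely what \cref{important} is designed to guarantee. A secondary subtlety is propagating the a priori $L^\infty$ bound uniformly in the auxiliary parameter $t$, which is why the scaling behaviour of the additional source $te_1$ in the blow-up analysis is essential.
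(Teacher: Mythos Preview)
Your overall architecture---build a small strict supersolution, extract a first solution below it by monotone iteration, then use degree theory to locate a second one outside---is a reasonable alternative to the paper's continuum argument. However, there is a genuine gap in your supersolution construction: the constraint $\lambda \leq \mu/(2k^\gamma)$ forces your admissible range of $\lambda$ to shrink to zero as $k \to \infty$, so you do not obtain a $\Lambda$ independent of $k$ as the lemma requires. The obstruction is intrinsic: $\psi_\mu$ is uniformly small, while near $\partial\Omega$ the term $\lambda f_k(\psi_\mu)$ behaves like $\lambda k^\gamma$ and cannot be dominated by $\mu$ uniformly in $k$. The paper circumvents this by taking as supersolution the solution $w_{k,\lambda^*}$ of the \emph{purely singular} approximated problem \cref{approximated5} with a carefully chosen larger parameter $\lambda^* > \lambda_0$; the point is that $\mathcal{L}w_{k,\lambda^*} = \lambda^*/(w_{k,\lambda^*}+1/k)^\gamma$ already carries the singular boundary behaviour, and the $k$-independent bound $\|w_{k,\lambda^*}\|_\infty \leq T(\lambda^*)^{1/(\gamma+1)}$ from \cref{boundednessofsubsolution} then controls the $g$-term. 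This is the key idea you are missing.

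As a secondary point, your invocation of \cref{important} for the local index is misplaced: that lemma is about connected sets of solutions parametrised by $\lambda$, not about admissibility of a truncation homotopy at fixed $\lambda$. The standard Amann index computation at a solution strictly between a sub- and a supersolution does not need it. The paper instead works in $(\lambda,u)$-space: from the degree values $d(I-K_0,B_\delta,0)=1$ and $d(I-K_0,B_R,0)=0$ at $\lambda=0$ and an abstract continuation theorem it obtains a continuum $S_k$ of solutions, and then uses \cref{important} to trap the branch emanating from $(0,0)$ below the supersolution family $\{w_{k,\lambda^*}\}$. Combined with a Brezis--Oswald type uniqueness step on a small ball, this separates the two solutions in $L^\infty$ by an amount $\epsilon>0$ independent of $k$---a quantitative separation that your degree/excision scheme does not automatically provide, and which is essential for the limit passage in \cref{mainth3}.
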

\begin{proof}
    We prove the result in several steps.
\\
\textbf{Step $1$:} (Existence of a super-solution which is not a solution) Taking $T$ and $\delta_0$ to be same as \cref{boundednessofsubsolution} and \cref{delta}, we fix $\delta_1:=(2 r-1)^{\frac{1}{1-r}} T^{\frac{\gamma+1}{1-r}}$, and take $\delta_2 \in(0, \min \{\delta_0, \delta_1\})$ to define:
\begin{equation*}
\Lambda:=\max _{0 \leq t \leq \delta_2} q(t), \quad \text { where } q(t)=\frac{1}{2}\left(\left(\frac{t}{T}\right)^{\gamma+1}-t^{\gamma+r}\right) .    
\end{equation*}
Fix any $\lambda_0 \in(0, \Lambda)$. As the function $q$ is strictly positive in $(0, \delta_2]$ (and so $\Lambda>0$), by the IVP of continuous functions, there exists $\delta \in(0, \delta_2]$ such that
$
\lambda_0=q(\delta) .
$ Moreover $\delta\leq \delta_2<\delta_1$ implies
\begin{equation}{\label{firstexi}}
\frac{r-1}{\gamma+1} \delta^{r+\gamma}<(r-1) \delta^{r+\gamma}<\lambda_0 .    
\end{equation}
Setting $\lambda^*:=(\delta / T)^{\gamma+1}$ we have
\begin{equation*}
\lambda^*>\lambda_0+\left(T(\lambda^*)^{\frac{1}{\gamma+1}}\right)^r\left(T(\lambda^*)^{\frac{1}{\gamma+1}}\right)^\gamma,
\end{equation*}
and this allows us to choose $k_0 \in \mathbb{N}$ such that
\begin{equation*}
\lambda^* \geq \lambda+\left(T(\lambda^*)^{\frac{1}{\gamma+1}}\right)^r\left(T(\lambda^*)^{\frac{1}{\gamma+1}}+\frac{1}{k}\right)^\gamma, \quad \forall k \geq k_0, \forall \lambda \in\left[0, \lambda_0\right] .
\end{equation*}
Now let $w_{k, \lambda^*} \in C_0^2(\overline{\Omega})$ (see \cref{contin}) be solution to \cref{approximated5} with $\lambda=\lambda^*$. By \cref{boundednessofsubsolution}, $w_{k,\lambda^*}$ satisfies $\left\|w_{k, \lambda^*}\right\|_{L^\infty(\Omega)} \leq T(\lambda^*)^{\frac{1}{\gamma+1}}=\delta$. Therefore,
\begin{equation*}
\lambda^* \geq \lambda+\left\|w_{k, \lambda^*}\right\|_{\infty}^r\left(\left\|w_{k, \lambda^*}\right\|_{\infty}+\frac{1}{k}\right)^\gamma \geq \lambda+(w_{k, \lambda^*})^r\left(w_{k, \lambda^*}+\frac{1}{k}\right)^\gamma
\end{equation*}
from which it holds
\begin{equation*}
    -\Delta w_{k, \lambda^*}+(-\Delta)^s w_{k,\lambda^*}=\frac{\lambda^*}{(w_{k, \lambda^*}+\frac{1}{k})^\gamma} \geq \frac{\lambda}{(w_{k, \lambda^*}+\frac{1}{k})^\gamma}+\left(w_{k, \lambda^*}\right)^r, \quad \forall k\geq k_0, \forall \lambda \in\left[0, \lambda_0\right],
\end{equation*}
thereby implying $w_{k, \lambda^*} \in C_0^2(\overline{\Omega})$ is a supersolution (and not a solution) to \cref{approximated4} for all $k \geq k_0$ and all $\lambda \in\left[0, \lambda_0\right]$ with $ \left\|w_{k, \lambda^*}\right\|_{L^\infty(\Omega)} \leq \delta$.   \smallskip\\
\textbf{Step $2$:} (Existence of a unique solution with a particular small norm) The function $(r-1) t^r\left(t+\frac{1}{k}\right)^{\gamma+1}$ is convex, increasing, starts from $0$ and hence intersects any straight line (not passing through origin) with positive slope at a unique point. So $\exists! \,M_k=M_k(\lambda)>0$, increasing with respect to the parameter $\lambda$, such that
\begin{equation*}
\begin{array}{c}
    (r-1) M_k^r\left(M_k+\frac{1}{k}\right)^{\gamma+1}=\lambda\left(M_k(\gamma+1)+\frac{1}{k}\right) \text{and }
    (r-1) t^r\left(t+\frac{1}{k}\right)^{\gamma+1} <\lambda\left(t(\gamma+1)+\frac{1}{k}\right),  \forall t \in[0, M_k).
\end{array}
\end{equation*}
Using this, one can derive (as the derivative is negative) that $\frac{\lambda f_k(t)+g(t)}{t}$ is decreasing in $[0, M_k)$ and thus, by [\citealp{eigenvalue}, Theorem 4.3], there exists at most one solution $u_k$ to \cref{approximated4} with $\left\|u_k\right\|_{\infty}<M_k$. Now by \cref{firstexi}, $\exists \ep>0$ such that $\frac{(r-1)}{\gamma+1}(\delta+\epsilon)^{r+\gamma}%<(r-1)(\delta+\epsilon)^{r+\gamma}
<\lambda_0$. This implies $\exists k_1 \in \mathbb{N}$ such that
$
\lambda_k:=\frac{(r-1)(\delta+\epsilon)^r\left((\delta+\epsilon)+\frac{1}{k}\right)^{\gamma+1}}{(\delta+\epsilon)(\gamma+1)+\frac{1}{k}}<\lambda_0 $ for all $ k \geq k_1.
$ By monotonicity of $M_k$ with respect to $\lambda$, we conclude
$
M_k(\lambda_0) \geq M_k(\lambda_k)=\delta+\epsilon,\forall k \geq k_1 .
$
\\
\textbf{Step $3$:} (Nonexistence of solution for large $\lambda$) By \cref{property1}, $\lim _{t\rightarrow 0^{+}} \frac{\lambda_1 t-g(t)}{f_1(t)}=0$, while by \cref{property2}, $\lim _{t\rightarrow+\infty} \frac{\lambda_1 t-g(t)}{f_1(t)}=-\infty$, where $\lambda_1$ is the first eigenvalue corresponding to $-\Delta+(-\Delta)^s$. We fix $\bar{\Lambda}:=\max _{t>0} \frac{\lambda_1 t-g(t)}{f_1(t)}>0$. Let $\exists$ a positive solution $u \in W_0^{1,2}(\Omega)$ to \cref{approximated4} for $\lambda>\bar\Lambda$. Taking $e_1>0$ (the first eigenvector) as test function, we get
\begin{equation*}
    \int_{\Omega}(\lambda_1 u-\lambda f_k(u)-g(u)) e_1=0 .
\end{equation*}
As $f_1(t) \leq f_k(t)$ for all $t>0$, from this last inequality we deduce
\begin{equation*}
\lambda<\max _{t>0} \frac{\lambda_1 t-g(t)}{f_k(t)} \leq \bar{\Lambda},
\end{equation*}
leading to a contradiction, and the step is proved.
\smallskip\\\textbf{Step $4$:} (Existence of two distinct solutions) By \cref{contin}, it is enough to show existence of solutions in $C(\overline\Omega)$. We fix $k\geq \max \left\{k_0, k_1\right\}=\bar k$, where $k_0$ and $k_1$ are given by Step 1 and Step 2, respectively. Next we define the operator $K_\lambda: C(\overline{\Omega}) \longrightarrow C(\overline{\Omega})$ by
\begin{equation*}
K_\lambda(u)=[-\Delta+(-\Delta)^s]^{-1}\left(\lambda f_k(u)+g(u)\right), \quad u \in C(\overline{\Omega}), \quad u\geq 0 \text{ in }\Omega .
\end{equation*}
From \cref{compactopt}, we have $K_\lambda$ is a compact operator for every $\lambda$. Further, solutions of \cref{approximated4} are fixed points of $K_\lambda$. As by Step 3, \cref{approximated4} does not admit any solution for $\lambda>\bar\Lambda$, so for $0\leq\lambda<\bar\Lambda$, choose $R_k$ (that depends on $k$) such that every solution $u$ of \cref{approximated4}$_k$ satisfies $\|u\|_{\infty}<R_k$ (indeed the same process of \cref{uniformboundedness} can be followed to get this). Consider the positive cone of $C(\overline{\Omega})$ given by
\begin{equation*}
    C=\left\{u \in C(\overline{\Omega}): u\geq 0 \text { in } \Omega\right\}, R:=R_k,
\end{equation*}
And define the compact map
$
    K_0: C \rightarrow C \text { by } K_0(u)=[-\Delta+(-\Delta)^s]^{-1} u^r.
$
Note that, we can consider the Leray-Schauder topological degree of $I-K_0$, i.e. $d(I-K_0, B_{R}, 0)$. Further, since $\delta<\delta_0$ ($\delta$ is such that $\lambda_0=q(\delta)$, by step 1), by \cref{delta}, the problem \cref{approximated4}$_{\lambda=0}$ has no solution on the boundary of the ball $B_\delta$. Consequently, one can also consider the Leray-Schauder topological degree $d\left(I-K_0, B_\delta, 0\right)$. Applying [\citealp{nussbaum}, Proposition 2.1], we get $d(I-K_0, B_{R}, 0)=0$ and $d(I-K_0, B_\delta, 0)=1$. Now setting $X=C(\overline\Omega), a=0, b=\bar\Lambda, U=B_{R}, U_1=B_\delta, T(\lambda,u)=K_\lambda(u)$ in [\citealp{implemma}, Theorem 4.4.2] we get the existence of a continuum (connected and closed) $S_k \subset \Sigma_k=\{(\lambda, u_k) \in[0,+\infty) \times C(\overline{\Omega}): u_k \text{ is solution of \cref{approximated4}}\}$ such that
\begin{equation}{\label{sec}}
    (0,0) \in S_k\text { and } S_k \cap\left(\{0\} \times(C(\overline{\Omega}) \backslash B_\delta)\right) \neq \varnothing, \quad \forall k \in \mathbb{N} .
\end{equation}
At this point, we define the continuous map $U:[0, \lambda_0] \longrightarrow C_0^2(\overline {\Omega})$ by $U(\lambda)=w_{k, \lambda^{*}}$, for every $\lambda \in\left[0, \lambda_0\right]$, then by Step 1, $U(\lambda)$ is a positive supersolution and not a solution to \cref{approximated4} for all $\lambda \in\left[0, \lambda_0\right]$. Since $\Omega$ satisfies the interior sphere condition, we apply \cref{important} to deduce that every pair $(\lambda, u_k)$ belonging to the connected component of $S_k \cap\left(([0, \lambda_0] \times C(\overline{\Omega})\right)$ which emanates from $(0,0)$ lies pointwise below the branch $\left\{(\lambda, {U}(\lambda)) : 0 \leq \lambda \leq \lambda_0\right\}$ at least until it crosses $\lambda=\lambda_0$. In particular, there exists $u_k$ in the slice $S_k^{\lambda_0}=\left\{u \in C(\overline{\Omega}):(\lambda_0, u) \in S_k\right\}$ and satisfies $0<u_k<w_{k, \lambda^*}$. Recalling that $\left\|w_{k, \lambda^*}\right\| \leq \delta$, we have $\left\|u_k\right\|_{\infty} \leq\left\|w_{k, \lambda^*}\right\| \leq \delta$. By Step 2, it is clear that $u_k$ is the unique solution of \cref{approximated4} with norm less than or equal to $\delta+\epsilon$. Again by \cref{sec} $ S_k \cap\left(\{0\} \times(C(\overline{\Omega}) \backslash B_{\delta+\varepsilon})\right) \neq \varnothing$ and so we conclude also the existence of $v_k$ (solution to \cref{approximated4}) in $S_k^{\lambda_0}$ with $\left\|v_k\right\|_{\infty} \geq \delta+\epsilon$. Clearly, $u_k$ and $v_k$ are different. Hence, we have found the existence of two distinct solutions for $\lambda = \lambda_0$ and since $\lambda_0<\bar\Lambda$
is arbitrary, we have the required result. 
\end{proof}
\section{Proof of \cref{mainth3}}{\label{th3}
\textbf{Step $1$:} We will deal with the first nonnegative eigenfunction $\phi_1$ (corresponding to the eigenvalue $\tilde\lambda_1$) of the Laplace operator. As $\Omega$ has smooth boundary, so $\phi_1$ is smooth up to the boundary. We extend $\phi_1\equiv 0$ outside $\Omega$. Since $\phi_1\in C^{\infty}(\overline\Omega)$ i.e. in particular continuously differentiable in $\overline\Omega,$ therefore $\phi_1$ is Lipschitz in $\overline\Omega$. Moreover $\phi_1\equiv 0$ in $\mathbb{R}^n\backslash \Omega$ implies $\phi_1\in C^{0,1}(\mathbb{R}^n)$. We now show that $-\Delta z(x)+(-\Delta)^sz(x)\leq \frac{C_0}{(w(x))^{\frac{2\gamma}{(\gamma+1)}}}M$, where \begin{equation*}z(x)=(w(x))^{\frac{2}{(\gamma+1)}}-\frac{1}{k}, \quad w(x)=\left(C_0\phi_1(x)+\frac{1}{k^{\frac{(\gamma+1)}{2}}}\right)(>0),\end{equation*}$M>0$ is a constant independent of $x$ and $C_0$ is a constant to be determined later. For $x\in\Omega$ estimating the Laplacian, we get
\begin{equation}{\label{lapla}}
    -\Delta z(x)=\frac{C_0}{w(x)^{2\gamma/(\gamma+1)}}\left\{\frac{2C_0(\gamma-1)}{(\gamma+1)^2}|\nabla\phi_1|^2+\frac{2\tilde\lambda_1\phi_1(x)}{\gamma+1}w(x)\right\}.
\end{equation}
Further, for $x\in\Omega$, we estimate the fractional Laplacian as
\begin{equation*}{\label{fraclap}}
    \begin{array}{rcl}
    (-\Delta)^sz(x)&=&c_{n,s}\int_{\mathbb{R}^n}\frac{w(x)^{2/(\gamma+1)}-w(y)^{2/(\gamma+1)}}{|x-y|^{n+2s}} dy\smallskip\\&=&\frac{c_{n,s}}{w(x)^{2\gamma/(\gamma+1)}}\int_{\mathbb{R}^n}\frac{w(x)^2-w(x)^{2\gamma/(\gamma+1)}w(y)^{2/(\gamma+1)}}{|x-y|^{n+2s}} dy\smallskip\\&\leq&
\frac{c_{n,s}}{w(x)^{2\gamma/(\gamma+1)}}\int_{\mathbb{R}^n}\frac{|w(x)^2-w(y)^2|}{|x-y|^{n+2s}} dy  \smallskip\\&\leq & \frac{2c_{n,s}\|w\|_{\infty}}{w(x)^{2\gamma/(\gamma+1)}}\int_{\mathbb{R}^n}\frac{|w(x)-w(y)|}{|x-y|^{n+2s}} dy \leq\frac{2C_0c_{n,s}(C_0\|\phi_1\|_{\infty}+1)}{w(x)^{2\gamma/(\gamma+1)}}\int_{\mathbb{R}^n}\frac{|\phi_1(x)-\phi_1(y)|}{|x-y|^{n+2s}}  dy.\end{array}
\end{equation*}
Now by Lipschitz continuity of $\phi_1$ and using $s\in(0,1/2)$, we get
\begin{equation}{\label{fraclapla}}
    \begin{array}{rcl}
       (-\Delta)^sz(x)&\leq &   \frac{2C_0c_{n,s}(C_0\|\phi_1\|_{\infty}+1)}{w(x)^{2\gamma/(\gamma+1)}}\left[\int_{B_1(x)}\frac{|\phi_1(x)-\phi_1(y)|}{|x-y|^{n+2s}} dy  +\int_{\mathbb{R}^n\backslash B_1(x)}\frac{|\phi_1(x)-\phi_1(y)|}{|x-y|^{n+2s}} dy \right]\smallskip\\&\leq&  \frac{2C_0c_{n,s}(C_0\|\phi_1\|_{\infty}+1)}{w(x)^{2\gamma/(\gamma+1)}}\left[\int_{B_1(x)}\frac{[\phi_1]_{C^{0,1}(\mathbb{R}^n)}}{|x-y|^{n+2s-1}} dy  +2\|\phi_1\|_\infty\int_{\mathbb{R}^n\backslash B_1(x)}\frac{1}{|x-y|^{n+2s}}  dy \right].%\smallskip\\&\leq&\frac{C_0C}{w(x)^{2\gamma/(\gamma+1)}},
    \end{array}
\end{equation}
We conclude our claim by clubbing \cref{lapla,fraclapla} and using the boundedness of $\phi_1, \nabla\phi_1$. \smallskip\\\textbf{Step $2$:} Fix $\lambda_0\in (0,\Lambda)$ (see Step 1 of \cref{twosol}). We already have by \cref{twosol}, the existence of two different sequence of solutions for \cref{approximated4}. It suffices to show that they converge to two different solutions of \cref{prob2}.\\ By Step 1, we can choose $C_0$ (depending on $\lambda_0$) so small that $-\Delta z+(-\Delta)^sz \leq \frac{\lambda_0}{(z+\frac{1}{k})^\gamma}$. Therefore $z$ is a subsolution to \cref{approximated5} for $\lambda=\lambda_0.$ Moreover, since $\frac{\lambda_0}{(t+\frac{1}{k})^\gamma} \leq \frac{\lambda_0}{(t+\frac{1}{k})^\gamma}+t^r$ for any $t \geq 0$, each solution $u$ of \cref{approximated4}$_{\lambda_0}$ is a supersolution of \cref{approximated5}$_{\lambda_0}$. By \cref{subsuper} we have $z\leq w_{k, \lambda_0} \leq u$. In particular, the sequences $u_k$ and $v_k$ obtained in \cref{twosol} satisfy:
\begin{equation}{\label{unibddtwosol}}
\begin{array}{c}
 \left(C_0 \phi_1+\frac{1}{k^{(\gamma+1) / 2}}\right)^{2 /(\gamma+1)}-\frac{1}{k} \leq w_{k, \lambda_0} \leq u_k \leq \delta, \\
 \left(C_0 \phi_1+\frac{1}{k^{(\gamma+1) / 2}}\right)^{2 /(\gamma+1)}-\frac{1}{k} \leq w_{k, \lambda_0} \leq v_k, \quad\left\|v_k\right\|_{\infty} \geq \delta+\varepsilon>\delta .
\end{array}
\end{equation}
We write $z_k=u_k$ or $v_k$ and note that $z_k\in W^{1,2}_0(\Omega)$. Then by \cref{unibddtwosol} and \cref{uniformboundedness}, we have
\begin{equation}{\label{bddsol}}
    \left(C \phi_1+\frac{1}{k^{(\gamma+1) / 2}}\right)^{2 /(\gamma+1)}-\frac{1}{k} \leq w_{k,\lambda_0}\leq z_k \leq M,
\end{equation}
where $M$ is a positive constant independent of $k$. Further, by \cref{greater}, for every $\omega \subset \subset \Omega$ there exists $c(\omega)$ (recall that $\lambda_0$ is fixed) such that
$
    z_k(x) \geq c(\omega)>0, \text { for every } x \in \omega, \text { for every } k \in \mathbb{N} .
$
One can now follow \cref{mainth2} and show $z_k$ is bounded in $W_{\operatorname{loc}}^{1,2}(\Omega)$, and the pointwise (a.e.) limit $u$ (of $\{u_k\}$) and $v$ (of $\{v_k\}$) are solutions to \cref{prob2}. As $\left\|u_k\right\|_{\infty} \leq \delta$ and $\left\|v_k\right\|_{\infty} \geq \delta+\epsilon>\delta$ we deduce that $u$ and $v$ are different.\\ \textbf{Step $3$:}
Fixing $\alpha>\frac{\gamma+1}{4}$ and $\theta=2 \alpha-1>\frac{\gamma-1}{2}$, we take $\phi=\left(z_k+\frac{1}{k}\right)^\theta-\left(\frac{1}{k}\right)^\theta$ as a test function in \cref{approximated4}$_{\lambda_0}$ to obtain
\begin{equation}{\label{1}}
    \begin{array}{l}
    \int_\Omega\nabla z_k\cdot \nabla\left(z_k+\frac{1}{k}\right)^\theta dx +\int _{\mathbb{R}^n}\int _{\mathbb{R}^n}\frac{(z_k(x)-z_k(y))\left((z_k(x)+\frac{1}{k})^\theta-(z_k(y)+\frac{1}{k})^\theta\right)}{|x-y|^{n+2s}}dx dy\smallskip\\=\lambda_0 \int_{\Omega} \frac{(z_k+\frac{1}{k})^\theta-(\frac{1}{k})^\theta}{(z_k+\frac{1}{k})^\gamma}dx+\int_{\Omega}\left(\left(z_k+\frac{1}{k}\right)^\theta-\left(\frac{1}{k}\right)^\theta\right) z_k^r\,dx.
    \end{array}
\end{equation}
Now using item $(i)$ of \cref{algebraic}, we get
\begin{equation}{\label{2}}
    \begin{array}{l}
    \frac{4 \theta}{(\theta+1)^2} \int _{\mathbb{R}^n}\int _{\mathbb{R}^n}\frac{\left(\left((z_k(x)+\frac{1}{k})^{\frac{\theta+1}{2}}-(\frac{1}{k})^\theta\right)-\left((z_k(y)+\frac{1}{k})^{\frac{\theta+1}{2}})-(\frac{1}{k})^\theta\right)\right)^2}{|x-y|^{n+2s}}dx dy\smallskip\\\leq    \int _{\mathbb{R}^n}\int _{\mathbb{R}^n}\frac{(z_k(x)-z_k(y))\left((z_k(x)+\frac{1}{k})^\theta-(z_k(y)+\frac{1}{k})^\theta\right)}{|x-y|^{n+2s}}dx dy.
    \end{array}
\end{equation}
By \cref{2}, the nonlocal integral of \cref{1} is nonengative. Therefore we get (also note \cref{embedding2})% (note that \cref{embedding2} gives the embedding of fractional Sobolev space into $W^{1,2}_0(\Omega)$) and we get 
\begin{equation*}
\begin{array}{rcl}
\frac{4 \theta}{(\theta+1)^2} \int_{\Omega}\left|\nabla\left(\left(z_k+\frac{1}{k}\right)^\alpha-\left(\frac{1}{k}\right)^\alpha\right)\right|^2 & =&\theta \int_{\Omega}\left(z_k+\frac{1}{k}\right)^{\theta-1}\left|\nabla z_k\right|^2 dx\smallskip\\
& \leq&\lambda_0 \int_{\Omega} \frac{\left(z_k+\frac{1}{k}\right)^\theta-\left(\frac{1}{k}\right)^\theta}{\left(z_k+\frac{1}{k}\right)^\gamma}dx+\int_{\Omega}\left(\left(z_k+\frac{1}{k}\right)^\theta -\left(\frac{1}{k}\right)^\theta\right) z_k^r\,dx \smallskip\\
& \leq &\lambda_0 \int_{\Omega}\left(z_k+\frac{1}{k}\right)^{\theta-\gamma}dx+\int_{\Omega}\left(z_k+\frac{1}{k}\right)^\theta z_k^r \,dx \smallskip\\
& \leq &\lambda_0 \int_{\Omega}\left(z_k+\frac{1}{k}\right)^{\theta-\gamma}dx+\int_{\Omega}\left(z_k+1\right)^\theta z_k^r \,dx.
\end{array}
\end{equation*}
By \cref{bddsol}, this implies $\left\{\left(z_k+\frac{1}{k}\right)^\alpha-\left(\frac{1}{k}\right)^\alpha\right\}$ is bounded in $W_0^{1,2}(\Omega)$ for $\theta \geq \gamma$. While for $\theta<\gamma$, we observe
\begin{equation*}
\left(z_k+\frac{1}{k}\right)^{\theta-\gamma} \leq\left(C_0 \phi_1+\frac{1}{k^{(\gamma+1) / 2}}\right)^{\frac{2(\theta-\gamma)}{1+\gamma}} \leq\left(C _0\phi_1\right)^{\frac{2(\theta-\gamma)}{1+\gamma}}.
\end{equation*}
Since $\theta>\frac{\gamma-1}{2}$, hence $\int_\Omega \phi_1^{\frac{2(\theta-\gamma)}{1+\gamma}}dx <+\infty$ (see \cite{mohammed}). Therefore $\left\{\left(z_k+\frac{1}{k}\right)^\alpha-\left(\frac{1}{k}\right)^\alpha\right\}$ is bounded in $W_0^{1,2}(\Omega)$. Consequently, a subsequence of $\left\{\left(z_k+\frac{1}{k}\right)^\alpha-\left(\frac{1}{k}\right)^\alpha\right\}$ is weakly convergent in $W_0^{1,2}(\Omega)$. Finally by pointwise convergence of $\{z_k\}$, we have the weak limit is $u^\alpha $ or $v^\alpha $ and hence $u^\alpha, v^\alpha\in W^{1,2}_0(\Omega).$
\begin{remark}
      As mentioned before, for $\gamma<3$, the above theorem shows that the solutions are in $W_0^{1,2}(\Omega)$ and, in particular, belong to $W_0^{1, q}(\Omega)$ for every $q<2$. Indeed if $\gamma\geq 3$, one can get that the solutions of \cref{prob2} still belong to $W_0^{1, q}(\Omega)$ for some $q<2$, we refer [\citealp{Arcoyamultigreaterthan}, Remark 1] for this.
\end{remark}
\section*{A related problem} In spirit of [\citealp{Arcoyamultigreaterthan}, Theorem 3], we mention that the same result can be obtained for mixed operators. We briefly outline the result and the proof. Consider the problem 
\begin{equation}{\label{prob3}}
\begin{array}{c}
-\Delta u+(-\Delta)^s u=\frac{f}{u^{\gamma}}\text { in } \Omega, \smallskip\\u>0 \text{ in } \Omega,\quad\quad\quad u=0  \text { in }\mathbb{R}^n \backslash \Omega;
\end{array}
\end{equation}
where $f\in L^m(\Omega), m>1$. By a solution of \cref{prob3}, we mean a function $u\in W^{1,2}_{\operatorname{loc}}(\Omega)$ such that $u>c_\omega>0$ (where $c_\omega$ is a constant), in $\omega$ for every $\omega\subset\subset\Omega$, and satisfies
\begin{equation*}
\int_{\Omega} \nabla u \cdot\nabla \phi\, d x+\int_{\mathbb{R}^n}\int_{\mathbb{R}^n}\frac{(u(x)-u(y))(\phi(x)-\phi(y))}{|x-y|^{n+2s}} d x d y=\int_\Omega \frac{f\phi}{u^\gamma} d x  \quad \forall \phi \in C_c^{1}(\Omega),
\end{equation*}
The following result slightly improves the case $\gamma>1$, see [\citealp{garain}, Theorem 2.15] for $p=2$.
\begin{theorem}
    Let $\Omega\subset \mathbb{R}^n$ be open, bounded with $C^1$ boundary and $\exists\, f_0>0$, a constant such that $f \geq f_0>0$ a.e. in $ \Omega$.
If $1<\gamma<\frac{3 m-1}{m+1}$, then $\exists\,u\in W_{\operatorname{loc}}^{1,2}(\Omega)$ satisfying \cref{prob3} with $u^\alpha \in W_0^{1,2}(\Omega),\forall\alpha \in\left(\frac{(m+1)(\gamma+1)}{4 m}, \frac{\gamma+1}{2}\right]$.
\end{theorem}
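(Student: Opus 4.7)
The plan is to adapt the approximation scheme used in the proof of \cref{mainth3} to the unperturbed singular problem \cref{prob3}. Setting $f_k := \min\{f,k\}$, I would for each $k \in \mathbb{N}$ consider the problem
\begin{equation*}
-\Delta u_k + (-\Delta)^s u_k = \frac{f_k}{(u_k + \tfrac{1}{k})^\gamma} \text{ in } \Omega, \qquad u_k = 0 \text{ in } \mathbb{R}^n \setminus \Omega.
\end{equation*}
Since the right-hand side is pointwise bounded by $k^{\gamma+1}$, existence of a nonnegative solution $u_k \in W^{1,2}_0(\Omega) \cap L^\infty(\Omega)$ together with monotonicity in $k$ is provided by [\citealp{garain}, Lemma 3.2]. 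The first crucial ingredient is to reproduce Step 1 of the proof of \cref{mainth3}: choose $C_0>0$ small enough (depending on $f_0$, $\phi_1$, $n$, $s$) so that $z_k := (C_0\phi_1 + k^{-(\gamma+1)/2})^{2/(\gamma+1)} - \tfrac{1}{k}$ satisfies $-\Delta z_k + (-\Delta)^s z_k \leq f_0 (z_k + \tfrac{1}{k})^{-\gamma}$ in $\Omega$; here $s<1/2$ is used exactly as in that earlier proof. Since $f \geq f_0$ a.e., $z_k$ is a subsolution and the comparison argument of \cref{subsuper} yields the uniform lower bound $u_k + \tfrac{1}{k} \geq (C_0\phi_1)^{2/(\gamma+1)}$ in $\Omega$.

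Next, fix $\alpha \in \bigl(\tfrac{(m+1)(\gamma+1)}{4m},\tfrac{\gamma+1}{2}\bigr]$ and set $\theta := 2\alpha-1$; since $\gamma>1$ and $m>1$ one checks $\alpha > 1/2$, so $\theta > 0$. I would test the equation with $\varphi_k := (u_k+\tfrac{1}{k})^{\theta} - k^{-\theta} \in W^{1,2}_0(\Omega)$. A direct computation handles the local part, while item $(i)$ of \cref{algebraic} applied to $x = u_k(x)+\tfrac{1}{k}$ and $y = u_k(y)+\tfrac{1}{k}$ controls the nonlocal contribution from below, as was done for the analogous estimate in \cref{mainth3}. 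The outcome is
\begin{equation*}
C_\alpha \big\| (u_k + \tfrac{1}{k})^\alpha - k^{-\alpha} \big\|_{W^{1,2}_0(\Omega)}^2 \leq \int_\Omega f_k\,(u_k + \tfrac{1}{k})^{2\alpha-1-\gamma}\,dx.
\end{equation*}
For $\alpha = (\gamma+1)/2$ the right-hand side collapses to $\|f\|_{L^1(\Omega)}$. For $\alpha < (\gamma+1)/2$, Hölder with exponents $m$ and $m'$ together with the lower bound of the previous step reduces the estimate to the integral of $\phi_1^{2m'(2\alpha-1-\gamma)/(\gamma+1)}$. Because $\phi_1$ behaves like $\operatorname{dist}(\cdot,\partial\Omega)$ near $\partial\Omega$, this integral is finite as soon as the exponent exceeds $-1$, and a short calculation (using $m' = m/(m-1)$) shows this is equivalent to $\alpha > \tfrac{(m+1)(\gamma+1)}{4m}$. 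Thus the right-hand side is bounded independently of $k$ for every admissible $\alpha$.

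From the uniform estimate, a subsequence of $(u_k+\tfrac{1}{k})^\alpha - k^{-\alpha}$ converges weakly in $W^{1,2}_0(\Omega)$; combining this with the monotone pointwise limit $u := \lim_k u_k$ produces $u^\alpha \in W^{1,2}_0(\Omega)$. The subsolution bound forces $u > 0$ in $\Omega$ and makes $u_k + \tfrac{1}{k} \geq c_\omega$ on every $\omega \subset\subset \Omega$, which supplies both the local $W^{1,2}$ bound (arguing exactly as in the case $\gamma>1$ of \cref{mainth2}) and the integrable dominating function needed to pass to the limit in $\int_\Omega f_k\varphi/(u_k+1/k)^\gamma$ for $\varphi\in C_c^1(\Omega)$ by dominated convergence.

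The main obstacle is the third paragraph: extracting the sharp threshold $\alpha > \tfrac{(m+1)(\gamma+1)}{4m}$ requires pairing a subsolution whose boundary profile is precisely $\phi_1^{2/(\gamma+1)}$ with a Hölder/Hardy balance against $\|f\|_{L^m}$, and the $(-\Delta)^s$-contribution in the subsolution computation must be handled via the Lipschitz regularity of $\phi_1$ and the restriction $s<1/2$, as in the proof of \cref{mainth3}. The hypothesis $\gamma < (3m-1)/(m+1)$ is exactly the condition $\tfrac{(m+1)(\gamma+1)}{4m}<1$, which is what guarantees that some admissible $\alpha$ with $u^\alpha\in W^{1,2}_0(\Omega)$ lies below $1$, thereby improving the regularity provided in [\citealp{garain}, Theorem 2.15].
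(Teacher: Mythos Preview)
Your proposal is correct and follows essentially the same route as the paper: the same truncation $f_k=\min\{f,k\}$ and approximating problem from [\citealp{garain}, Lemma 3.2], the same subsolution $z_k=(C_0\phi_1+k^{-(\gamma+1)/2})^{2/(\gamma+1)}-1/k$ built via Step~1 of the proof of \cref{mainth3} (with the $s<1/2$ restriction), the comparison of \cref{subsuper}, and the test function $(u_k+1/k)^{\theta}-k^{-\theta}$ combined with H\"older against $\|f\|_{L^m}$ and the integrability of $\phi_1^{2m'(\theta-\gamma)/(1+\gamma)}$ (the paper invokes \cite{mohammed} for this last point, which is precisely your ``exponent exceeds $-1$'' computation). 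The only cosmetic difference is that you single out the endpoint $\alpha=(\gamma+1)/2$ explicitly, whereas the paper absorbs it into the inequality $\theta-\gamma\le 0$.
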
 
\begin{remark}
    Clearly if $1<\gamma<\frac{3 m-1}{m+1}$, we have $\frac{(m+1)(\gamma+1)}{4 m}<1<\frac{\gamma+1}{2}$ and $\alpha=1$ can be chosen in the previous theorem to obtain $u\in W_0^{1,2}(\Omega)$. Therefore the hypothesis on $f$ allows us to obtain a solution $u\in W^{1,2}_0(\Omega)$ for all $m>1$ and $1<\gamma<\frac{3 m-1}{m+1}$, earlier which was restricted for $\gamma\leq 1$ only.
\end{remark}
\begin{proof}
We outline the idea of the proof only. Let $u_k$ be the unique positive weak solution to the approximating problem (see [\citealp{garain}, Lemma 3.2])
\begin{equation}{\label{approximated6}}
\begin{array}{c}
-\Delta u_k+(-\Delta)^s u_k=\frac{f_k}{(u_k+\frac{1}{k})^{\gamma}}\text { in } \Omega, \smallskip\\u_k>0 \text{ in } \Omega,\quad u_k=0  \text { in }\mathbb{R}^n \backslash \Omega;
\end{array}
\end{equation}
where $f_k(x)=\min \{f(x), k\}$. As in Step 2 of the proof of \cref{mainth3}, one can get that the function $z_k(x)=\left(C_0\phi_1(x)+\frac{1}{k^{\frac{(\gamma+1)}{2}}}\right)^{\frac{2}{(\gamma+1)}}-\frac{1}{k}, $ is a subsolution to \cref{approximated6} (as $f\geq f_0$ a.e. in $\Omega$) for sufficiently small $C_0\equiv C_0(f_0,\gamma)$.\smallskip\\ Thus by \cref{subsuper},
\begin{equation}{\label{finalbdd}}
u_k \geq\left(C_0 \phi_1+\frac{1}{k^{(\gamma+1) / 2}}\right)^{2 /(\gamma+1)}-\frac{1}{k} .\end{equation}
Fixing $\alpha>\frac{(m+1)(\gamma+1)}{4 m}$ and $\theta=2 \alpha-1>\frac{(m+1) \gamma+1-m}{2 m}$, one can take $\phi=\left(u_k+\frac{1}{k}\right)^\theta-\left(\frac{1}{k}\right)^\theta$ as a test function in \cref{approximated6} to obtain similarly like \cref{mainth3},\begin{equation*}
\theta \int_{\Omega}\left(u_k+\frac{1}{k}\right)^{\theta-1}\left|\nabla u_k\right|^2 dx\leq \int_{\Omega} f_k \frac{\left(u_k+\frac{1}{k}\right)^\theta-\left(\frac{1}{k}\right)^\theta}{\left(u_k+\frac{1}{k}\right)^\gamma}dx,
\end{equation*}
and hence, by Hölder inequality,
\begin{equation*}
    \frac{4 \theta }{(\theta+1)^2} \int_{\Omega}\left|\nabla\left(\left(u_k+\frac{1}{k}\right)^\alpha-\left(\frac{1}{k}\right)^\alpha\right)\right|^2 dx \leq \int_{\Omega} f_k\left(u_k+\frac{1}{k}\right)^{\theta-\gamma} dx\leq\|f\|_{L^m}\left(\int_{\Omega}\left(u_k+\frac{1}{k}\right)^{m^{\prime}(\theta-\gamma)}\right)^{1 / m^{\prime}} dx.
\end{equation*}
As $\alpha \leq \frac{\gamma+1}{2}$, so $\theta-\gamma \leq 0$. Further $ \theta>\frac{(m+1) \gamma+1-m}{2 m}$ and $\Omega$ satisfies interior sphere condition, so by \cref{finalbdd},
\begin{equation*}
    \left(z_k(x)+\frac{1}{k}\right)^{m^{\prime}(\theta-\gamma)} \leq\left(C_0 \phi_1+\frac{1}{k^{(\gamma+1) / 2}}\right)^{\frac{2 m^{\prime}(\theta-\gamma)}{1+\gamma}}\leq\left(C_0 \phi_1\right)^{\frac{2 m^{\prime}(\theta-\gamma)}{1+\gamma}}\in L^1(\Omega).
\end{equation*}
The rest of the proof follows similarly like \cref{mainth3}.\end{proof}

\bibliography{main.bib}

\begin{thebibliography}{10}

\bibitem{abdellaoui2}
Boumediene Abdellaoui, Ghoulam Ould~Mohamed Mahmoud, and Ahmed Youssfi.
\newblock Fractional heat equation with singular nonlinearity.
\newblock {\em J. Pseudo-Differ. Oper. Appl.}, 13(4):Paper No. 50, 48, 2022.

\bibitem{abdellaoui1}
Boumediene Abdellaoui, Mar\'{i}a Medina, Ireneo Peral, and Ana Primo.
\newblock The effect of the {H}ardy potential in some {C}alder\'{o}n-{Z}ygmund properties for the fractional {L}aplacian.
\newblock {\em J. Differential Equations}, 260(11):8160--8206, 2016.

\bibitem{implemma}
Antonio Ambrosetti and David Arcoya.
\newblock {\em An introduction to nonlinear functional analysis and elliptic problems}, volume~82 of {\em Progress in Nonlinear Differential Equations and their Applications}.
\newblock Birkh\"{a}user Boston, Ltd., Boston, MA, 2011.

\bibitem{ambrosetti}
Antonio Ambrosetti and Giovanni Prodi.
\newblock {\em A primer of nonlinear analysis}.
\newblock Number~34. Cambridge University Press, 1995.

\bibitem{Arcoya}
David Arcoya and Lucio Boccardo.
\newblock Multiplicity of solutions for a {D}irichlet problem with a singular and a supercritical nonlinearities.
\newblock {\em Differential Integral Equations}, 26(1-2):119--128, 2013.

\bibitem{Arcoyamultigreaterthan}
David Arcoya and Lourdes Moreno-M\'{e}rida.
\newblock Multiplicity of solutions for a {D}irichlet problem with a strongly singular nonlinearity.
\newblock {\em Nonlinear Anal.}, 95:281--291, 2014.

\bibitem{arora}
Rakesh Arora and Vicenţiu~D. Rădulescu.
\newblock Combined effects in mixed local–nonlocal stationary problems.
\newblock {\em Proceedings of the Royal Society of Edinburgh Section A: Mathematics}, page 1–47, 2023.

\bibitem{bal2024mixed}
Kaushik Bal and Stuti Das.
\newblock On a mixed local-nonlocal evolution equation with singular nonlinearity.
\newblock {\em arXiv preprint arXiv:2402.06926}, 2024.

\bibitem{quasilinear}
Kaushik Bal and Prashanta Garain.
\newblock Multiplicity of solution for a quasilinear equation with singular nonlinearity.
\newblock {\em Mediterr. J. Math.}, 17(3):Paper No. 91, 20, 2020.

\bibitem{fracsingular}
Bego\~{n}a Barrios, Ida De~Bonis, Mar\'{\i}a Medina, and Ireneo Peral.
\newblock Semilinear problems for the fractional {L}aplacian with a singular nonlinearity.
\newblock {\em Open Math.}, 13(1):390--407, 2015.

\bibitem{Biagiregularitymaximum}
Stefano Biagi, Serena Dipierro, Enrico Valdinoci, and Eugenio Vecchi.
\newblock Mixed local and nonlocal elliptic operators: regularity and maximum principles.
\newblock {\em Comm. Partial Differential Equations}, 47(3):585--629, 2022.

\bibitem{FaberKrahn}
Stefano Biagi, Serena Dipierro, Enrico Valdinoci, and Eugenio Vecchi.
\newblock A {F}aber-{K}rahn inequality for mixed local and nonlocal operators.
\newblock {\em J. Anal. Math.}, 150(2):405--448, 2023.

\bibitem{eigenvalue}
Stefano Biagi, Dimitri Mugnai, and Eugenio Vecchi.
\newblock A {B}rezis-{O}swald approach for mixed local and nonlocal operators.
\newblock {\em Commun. Contemp. Math.}, 26(2):Paper No. 2250057, 28, 2024.

\bibitem{Biagisymmetry}
Stefano Biagi, Eugenio Vecchi, Serena Dipierro, and Enrico Valdinoci.
\newblock Semilinear elliptic equations involving mixed local and nonlocal operators.
\newblock {\em Proc. Roy. Soc. Edinburgh Sect. A}, 151(5):1611--1641, 2021.

\bibitem{boccardoexistence}
Lucio Boccardo.
\newblock A {D}irichlet problem with singular and supercritical nonlinearities.
\newblock {\em Nonlinear Anal.}, 75(12):4436--4440, 2012.

\bibitem{orsina}
Lucio Boccardo and Luigi Orsina.
\newblock Semilinear elliptic equations with singular nonlinearities.
\newblock {\em Calc. Var. Partial Differential Equations}, 37(3-4):363--380, 2010.

\bibitem{frac2}
Stefano Buccheri, Jo{\~a}o~V{\'\i}tor da~Silva, and Lu{\'\i}s~Henrique de~Miranda.
\newblock A system of local-nonlocal {$p$}-{L}aplacians: the eigenvalue problem and its asymptotic limit as $p\to\infty$.
\newblock {\em Asymptotic Analysis}, 128(2):149--181, 2022.

\bibitem{scase}
Annamaria Canino, Luigi Montoro, Berardino Sciunzi, and Marco Squassina.
\newblock Nonlocal problems with singular nonlinearity.
\newblock {\em Bull. Sci. Math.}, 141(3):223--250, 2017.

\bibitem{Canino}
Annamaria Canino, Berardino Sciunzi, and Alessandro Trombetta.
\newblock Existence and uniqueness for {$p$}-{L}aplace equations involving singular nonlinearities.
\newblock {\em NoDEA Nonlinear Differential Equations Appl.}, 23(2):Art. 8, 18, 2016.

\bibitem{CrRaTa}
M.~G. Crandall, P.~H. Rabinowitz, and L.~Tartar.
\newblock On a {D}irichlet problem with a singular nonlinearity.
\newblock {\em Comm. Partial Differential Equations}, 2(2):193--222, 1977.

\bibitem{ineq}
Lucio Damascelli.
\newblock Comparison theorems for some quasilinear degenerate elliptic operators and applications to symmetry and monotonicity results.
\newblock {\em Ann. Inst. H. Poincar\'{e} C Anal. Non Lin\'{e}aire}, 15(4):493--516, 1998.

\bibitem{parabolic1}
Ida de~Bonis and Linda~Maria De~Cave.
\newblock Degenerate parabolic equations with singular lower order terms.
\newblock {\em Differential Integral Equations}, 27(9-10):949--976, 2014.

\bibitem{nussbaum}
D.~G. de~Figueiredo, P.-L. Lions, and R.~D. Nussbaum.
\newblock A priori estimates and existence of positive solutions of semilinear elliptic equations.
\newblock {\em J. Math. Pures Appl. (9)}, 61(1):41--63, 1982.

\bibitem{MG}
Cristiana De~Filippis and Giuseppe Mingione.
\newblock Gradient regularity in mixed local and nonlocal problems.
\newblock {\em Mathematische Annalen}, pages 1--68, 2022.

\bibitem{frac}
Eleonora Di~Nezza, Giampiero Palatucci, and Enrico Valdinoci.
\newblock Hitchhiker's guide to the fractional sobolev spaces.
\newblock {\em Bulletin des sciences math{\'e}matiques}, 136(5):521--573, 2012.

\bibitem{LCE}
Lawrence~C Evans.
\newblock {\em Partial Differential Equations}, volume~19.
\newblock American Mathematical Soc., 2010.

\bibitem{gamez}
J.~L. G\'{a}mez.
\newblock Sub- and super-solutions in bifurcation problems.
\newblock {\em Nonlinear Anal.}, 28(4):625--632, 1997.

\bibitem{garaingeometric}
Prashanta Garain.
\newblock On a class of mixed local and nonlocal semilinear elliptic equation with singular nonlinearity.
\newblock {\em J. Geom. Anal.}, 33(7):Paper No. 212, 20, 2023.

\bibitem{fracvarisin}
Prashanta Garain and Tuhina Mukherjee.
\newblock Quasilinear nonlocal elliptic problems with variable singular exponent.
\newblock {\em Commun. Pure Appl. Anal.}, 19(11):5059--5075, 2020.

\bibitem{garain}
Prashanta Garain and Alexander Ukhlov.
\newblock Mixed local and nonlocal {S}obolev inequalities with extremal and associated quasilinear singular elliptic problems.
\newblock {\em Nonlinear Anal.}, 223:Paper No. 113022, 35, 2022.

\bibitem{Giacomoni}
Jacques Giacomoni, Ian Schindler, and Peter Tak\'{a}\v{c}.
\newblock Sobolev versus {H}\"{o}lder local minimizers and existence of multiple solutions for a singular quasilinear equation.
\newblock {\em Ann. Sc. Norm. Super. Pisa Cl. Sci. (5)}, 6(1):117--158, 2007.

\bibitem{GidasNiNirenberg}
B.~Gidas, Wei~Ming Ni, and L.~Nirenberg.
\newblock Symmetry and related properties via the maximum principle.
\newblock {\em Comm. Math. Phys.}, 68(3):209--243, 1979.

\bibitem{GidasSpruckBlowup}
B.~Gidas and J.~Spruck.
\newblock Global and local behavior of positive solutions of nonlinear elliptic equations.
\newblock {\em Comm. Pure Appl. Math.}, 34(4):525--598, 1981.

\bibitem{GidasSpruck}
B.~Gidas and J.~Spruck.
\newblock A priori bounds for positive solutions of nonlinear elliptic equations.
\newblock {\em Comm. Partial Differential Equations}, 6(8):883--901, 1981.

\bibitem{Haitao}
Yang Haitao.
\newblock Multiplicity and asymptotic behavior of positive solutions for a singular semilinear elliptic problem.
\newblock {\em J. Differential Equations}, 189(2):487--512, 2003.

\bibitem{kinderlehrer2000introduction}
David Kinderlehrer and Guido Stampacchia.
\newblock {\em An introduction to variational inequalities and their applications}, volume~88 of {\em Pure and Applied Mathematics}.
\newblock Academic Press, Inc. [Harcourt Brace Jovanovich, Publishers], New York-London, 1980.

\bibitem{LaMc}
A.~C. Lazer and P.~J. McKenna.
\newblock On a singular nonlinear elliptic boundary-value problem.
\newblock {\em Proc. Amer. Math. Soc.}, 111(3):721--730, 1991.

\bibitem{FractionalEigenvalues}
Erik Lindgren and Peter Lindqvist.
\newblock Fractional eigenvalues.
\newblock {\em Calc. Var. Partial Differential Equations}, 49(1-2):795--826, 2014.

\bibitem{mohammed}
Ahmed Mohammed.
\newblock Positive solutions of the {$p$}-{L}aplace equation with singular nonlinearity.
\newblock {\em J. Math. Anal. Appl.}, 352(1):234--245, 2009.

\bibitem{stampacchia}
Guido Stampacchia.
\newblock Le probl\`eme de {Dirichlet} pour les \'equations elliptiques du second ordre \`a coefficients discontinus.
\newblock {\em Annales de l'Institut Fourier}, 15(1):189--257, 1965.

\bibitem{valdinoci}
Xifeng Su, Enrico Valdinoci, Yuanhong Wei, and Jiwen Zhang.
\newblock Regularity results for solutions of mixed local and nonlocal elliptic equations.
\newblock {\em Math. Z.}, 302(3):1855--1878, 2022.

\end{thebibliography}

\bibliographystyle{plain}

%\printbibliography

\end{document}